\theoremstyle{plain} 
\newtheorem{theorem}{\indent\sc Theorem}[section]
\newtheorem{lemma}[theorem]{\indent\sc Lemma}
\newtheorem{corollary}[theorem]{\indent\sc Corollary}
\newtheorem{proposition}[theorem]{\indent\sc Proposition}
\theoremstyle{definition} 
\newtheorem{definition}[theorem]{\indent\sc Definition}
\newtheorem{example}[theorem]{\indent\sc Example}
\newtheorem{observation}[theorem]{\indent\sc Observation}
\newtheorem{question}[theorem]{\indent\sc Question}
\newcommand{\del}{\partial}
\newcommand{\delbar}{\overline{\partial}}
\newcommand{\ddbar}{\del\delbar}
\newcommand{\vp}{\varphi}
\newcommand{\ve}{\varepsilon}
\begin{document}

\pagestyle{plain}
\thispagestyle{plain}

\title[Holomorphic foliation associated with a semi-positive class of numerical dimension one]
{Holomorphic foliation associated with a semi-positive class of numerical dimension one}

\author[Takayuki KOIKE]{Takayuki KOIKE$^{1}$}
\address{ 
$^{1}$ Department of Mathematics \\
Graduate School of Science \\
Osaka City University \\
3-3-138 Sugimoto \\
Osaka 558-8585 \\
Japan 
}
\email{tkoike@osaka-cu.ac.jp}
%
\renewcommand{\thefootnote}{\fnsymbol{footnote}}
\footnote[0]{ 
2020 \textit{Mathematics Subject Classification}.
Primary 32M25; Secondary 14C20.
}
\footnote[0]{ 
\textit{Key words and phrases}.
Semi-positive $(1, 1)$-classes, holomorphic foliations, Levi-flat hypersurfaces. 
}
\renewcommand{\thefootnote}{\arabic{footnote}}

\begin{abstract} 
Let $X$ be a compact K\"ahler manifold and $\alpha$ be a class in the Dolbeault cohomology class of bidegree $(1, 1)$ on $X$. 
When the numerical dimension of $\alpha$ is one and $\alpha$ admits at least two smooth semi-positive representatives, 
we show the existence of a family of real analytic Levi-flat hypersurfaces in $X$ and a holomorphic foliation on a suitable domain of $X$ along whose leaves any semi-positive representative of $\alpha$ is zero. 
As an application, we give the affirmative answer to \cite[Conjecture 2.1]{K2019} on the relation between the semi-positivity of the line bundle $[Y]$ and the analytic structure of a neighborhood of $Y$ for a smooth connected hypersurface $Y$ of $X$. 
\end{abstract}

\maketitle

\section{Introduction}\label{section:introd}

Let $X$ be a connected compact K\"ahler manifold 
and $\alpha\in H^{1, 1}(X, \mathbb{R}) (:= H^{1, 1}(X, \mathbb{C})\cap H^2(X, \mathbb{R}))$ 
be a class such that $\#{\rm SP}(\alpha)>1$ and ${\rm nd}(\alpha)=1$, where 
${\rm SP}(\alpha)$ is the set of all $C^\infty$'ly smooth $d$-closed semi-positive $(1, 1)$-forms on $X$ which represents the class $\alpha$, and 
\[
{\rm nd}(\alpha) := \max\{k\in \{0, 1, 2, \dots, {\rm dim}\,X\}\mid \alpha^{\wedge k}\not=0\ \text{in}\ H^{k, k}(X, \mathbb{C})\}. 
\]
For such a class $\alpha$, we denote by $K_\alpha$ the closed subset of $X$ defined by 
\[
K_\alpha := \bigcap_{\theta\in {\rm SP}(\alpha)}\bigcap_{\psi\in {\rm PSH}^\infty(X, \theta)}\{x\in X\mid (d\psi)_x=0\}, 
\]
where ${\rm PSH}^\infty(X, \theta)$ is the set of all the $\theta$-plurisubharmonic functions of class $C^\infty$ for a $C^\infty$'ly smooth $(1, 1)$-form $\theta$ on $X$: 
i.e. ${\rm PSH}^\infty(X, \theta) := \{\psi\colon X\to \mathbb{R}: C^\infty\mid \theta+\sqrt{-1}\ddbar \psi\geq 0\}$. 
Note that it follows from the $\ddbar$-lemma that ${\rm PSH}^\infty(X, \theta)\supsetneqq  \mathbb{R}$ holds for $\theta\in {\rm SP}(\alpha)$ and $K_\alpha\subsetneqq X$, since $\#{\rm SP}(\alpha)>1$. 
For such $X$ and $\alpha$, we show the following: 
\begin{theorem}\label{thm:main1}
Let $X$ and $\alpha$ be as above. 
Then there uniquely exists a non-singular holomorphic foliation $\mathcal{F}_\alpha$ on $X\setminus K_\alpha$ of complex codimension $1$ such that $i_\mathcal{L}^*\theta\equiv 0$ for any $\theta\in {\rm SP}(\alpha)$ and any leaf $\mathcal{L}$ of $\mathcal{F}_\alpha$, where $i_\mathcal{L}\colon \mathcal{L}\to X$ is the holomorphic immersion. 
\end{theorem}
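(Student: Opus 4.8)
The plan is to construct the foliation $\mathcal{F}_\alpha$ by specifying its tangent distribution at each point of $X \setminus K_\alpha$ and then verifying integrability. The key observation is that for any $\theta \in {\rm SP}(\alpha)$ and any $\psi \in {\rm PSH}^\infty(X, \theta)$, the form $\theta_\psi := \theta + \sqrt{-1}\ddbar\psi$ is a smooth semi-positive $(1,1)$-form representing $\alpha$, hence lies in ${\rm SP}(\alpha)$. Since ${\rm nd}(\alpha) = 1$ we have $\alpha^{\wedge 2} = 0$ in cohomology, and for semi-positive forms this should force the pointwise rank of each $\theta_\psi$ to be at most one almost everywhere; more precisely, a semi-positive $(1,1)$-form of numerical dimension one has generic rank one. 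At a point $x \notin K_\alpha$ there exists, by definition, some $\theta$ and some $\psi$ with $(d\psi)_x \neq 0$, and I expect this to guarantee that the kernel distribution of $\theta_\psi$ (the set of $(1,0)$-vectors $v$ with $\theta_\psi(v, \bar{w}) = 0$ for all $w$) has constant complex codimension one near $x$.

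First I would define, at each $x \in X \setminus K_\alpha$, the subspace $T_x \mathcal{F}_\alpha \subset T_x^{1,0} X$ as the common kernel of all semi-positive representatives, i.e. the set of $(1,0)$-vectors annihilated by $\theta_\psi(\cdot, \overline{\cdot})$ for every $\theta \in {\rm SP}(\alpha)$ and $\psi \in {\rm PSH}^\infty(X, \theta)$. The crucial point is to show this distribution has complex codimension exactly one off $K_\alpha$: codimension at least one because $\theta_\psi \not\equiv 0$ near such points, and codimension exactly one because if two independent semi-positive forms had distinct rank-one kernels at $x$, their sum would have rank two there, and one should be able to propagate this rank-two locus to contradict $\alpha^{\wedge 2}=0$ (this is where $\#{\rm SP}(\alpha) > 1$ and the numerical dimension hypothesis must interact). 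Next I would establish that the distribution is smooth (indeed that its annihilator is locally generated by a single $(1,0)$-form up to scale), and then integrable: the defining property that $\theta_\psi$ vanishes along the distribution, combined with $d\theta_\psi = 0$, should give a Frobenius-type condition. Concretely, writing the annihilator locally as spanned by a $(1,0)$-form $\beta$ with $\sqrt{-1}\,\beta \wedge \overline{\beta}$ proportional to $\theta_\psi$, the closedness $d\theta_\psi = 0$ translates into $d\beta \wedge \beta = 0$, which is exactly the holomorphic Frobenius integrability condition.

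The foliation's defining condition $i_\mathcal{L}^*\theta \equiv 0$ for every $\theta \in {\rm SP}(\alpha)$ then follows from the construction, since the tangent space to each leaf lies in the common kernel and the leaves are complex submanifolds, so the pullback of any semi-positive representative—being a semi-positive form vanishing on tangent vectors—is zero. For uniqueness, I would argue that any foliation with the stated annihilation property must have tangent spaces contained in the common kernel distribution, and since that distribution already has codimension one, equality is forced; thus the foliation is uniquely determined.

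The main obstacle I anticipate is proving that the common-kernel distribution has \emph{constant} rank (codimension exactly one) and is \emph{smooth} across all of $X \setminus K_\alpha$, rather than merely on a dense open set. The difficulty is that individual semi-positive forms may degenerate (drop rank) on proper analytic or real-analytic subsets, and different representatives degenerate along different loci; one must show that after removing precisely $K_\alpha$ the intersection of all the kernels stabilizes to a genuine codimension-one holomorphic distribution. I expect this to require a careful analysis combining the numerical dimension hypothesis ${\rm nd}(\alpha)=1$ (to cap the rank at one generically and prevent the kernels of distinct representatives from being transverse) with the existence of at least two representatives (to ensure the distribution is well-defined and does not collapse), likely invoking the structure theory of semi-positive classes and the real-analyticity furnished by the accompanying Levi-flat hypersurfaces mentioned in the abstract.
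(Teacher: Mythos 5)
Your starting point agrees with the paper's: the tangent distribution of $\mathcal{F}_\alpha$ is indeed the common kernel of the semi-positive representatives, and the paper realizes it concretely as $(\del\psi)^\perp$ near any point where $(d\psi)_x\neq 0$, using the pointwise linear algebra you gesture at (a positive semi-definite Hermitian matrix with vanishing diagonal minors vanishes off the diagonal). One small repair your argument needs there: at a point $x\notin K_\alpha$ with $(d\psi)_x\neq 0$ the form $\theta+\sqrt{-1}\ddbar\psi$ may itself vanish, so you cannot take its kernel directly; the paper replaces $\psi$ by $\widehat\psi=\log(1+e^\psi)$, whose associated form picks up the term $\frac{e^\psi}{(1+e^\psi)^2}\sqrt{-1}\del\psi\wedge\delbar\psi$ and hence is nonzero wherever $d\psi\neq 0$. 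With that fix, the issue you single out as the main obstacle --- constant rank and smoothness of the distribution off $K_\alpha$ --- is actually the easy part: locally the distribution is the kernel of the single nowhere-vanishing smooth $(1,0)$-form $\del\psi$, so it is automatically a smooth codimension-one distribution, and independence of the choice of $\psi$ follows from the same linear algebra.

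The genuine gap is in your integrability step. Writing the annihilator as a smooth (not holomorphic) $(1,0)$-form $\beta=\del\psi$ with $\theta+\sqrt{-1}\ddbar\widehat\psi$ proportional to $\sqrt{-1}\beta\wedge\overline{\beta}$, closedness of that form yields $\del\beta\wedge\beta=0$ and the real Frobenius condition, and the Monge--Amp\`ere foliation theory (Sommer, Bedford--Kalka, which the paper cites) then gives a $C^\infty$ foliation whose \emph{leaves} are complex hypersurfaces. It does \emph{not} give $\delbar\beta\wedge\beta=0$, i.e.\ it does not make the foliation \emph{holomorphic} (there are $C^\infty$ foliations by complex hypersurfaces that are not holomorphic foliations, e.g.\ the leaves $\{w=t+\bar t z\}$ in a bidisc). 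The paper's proof of holomorphicity is where all the real work lies: it shows that $\sqrt{-1}\ddbar\widehat\psi=\widehat g\cdot\sqrt{-1}\del\widehat\psi\wedge\delbar\widehat\psi$ with $\widehat g$ leafwise constant, establishes a dichotomy (Theorem \ref{thm:main_sec3}) --- either $\widehat g$ is non-constant on some level set of $\psi$, which forces a fibration $\Phi\colon X\to R$ over a curve via an Albanese/linear-system argument (Lemma \ref{lem:fibration}), or $\widehat g=G\circ\widehat\psi$, in which case solving the ODE $\chi''=-G\chi'$ produces a genuine pluriharmonic first integral $h=\chi\circ\widehat\psi$ whose local holomorphic completions $F$ (with ${\rm Re}\,F=h$) exhibit the foliation as holomorphic --- and then extends the foliation across the critical levels of $\psi$ to all of $X\setminus K_\alpha$ by the analytic-continuation and patching arguments of \S\ref{sec:4} and \S\ref{sec:5}, which use the global hypotheses (compactness, the Hodge-index-type Lemma \ref{lem:hodge_index_thm}, and $\#{\rm SP}(\alpha)>1$) in an essential way. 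None of these ideas appear in your proposal, and the Frobenius shortcut you propose in their place does not close the gap. Your uniqueness argument, by contrast, is essentially the paper's.
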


We investigate how large can $\mathcal{F}_\alpha$ be analytically extended by classifying the connected components of $K_\alpha$ from the view point of the existence of an {\it $\mathcal{F}_\alpha$-adaptive function} in the following sense on a suitable neighborhood: 
We say that a continuous function $h\colon \overline{W}\to [-\infty, +\infty]$ on the closure of a domain (connected open subset) $W$ of $X$ is $\mathcal{F}_\alpha$-adaptive if $h|_W$ is a $\mathbb{R}$-valued non-constant pluriharmonic function, $h|_{W\setminus K_\alpha}$ is $\mathcal{F}_\alpha$-leafwise constant, and the preimage $h^{-1}(\{\textstyle\max_{\overline{W}}h, \textstyle\min_{\overline{W}}h\})$ coincides with the boundary $\del W$ of $W$, 
where the topology of $[-\infty, +\infty]$ is the one such that $[-\infty, +\infty]$ is homeomorphic to the interval $[0, 1]\subset \mathbb{R}$. 

\begin{definition}
A connected component $K'$ of $K_\alpha$ is said to be {\it an essential component} if there does {\bf not} exist a connected open neighborhood $W$ of $K'$ in $X$ such that $W\cap K_\alpha$ is a relatively compact subset of $W$ and that there exists an $\mathcal{F}_\alpha$-adaptive function on $\overline{W}$. The union of all the essential components of $K_\alpha$ is denoted by $K_\alpha^{\rm ess}$.  
\end{definition}

Our second main result is the following: 
\begin{theorem}\label{thm:main2}
Let $X$ and $\alpha$ be as above. 
Then the holomorphic foliation $\mathcal{F}_\alpha$ on $X\setminus K_\alpha$ as in Theorem \ref{thm:main1} can be extended to $X\setminus K_\alpha^{\rm ess}$ as a (maybe singular) holomorphic foliation. Moreover, one of the following holds: 
\begin{description}
\item[Case I] There exists a surjective holomorphic map $\Phi\colon X\to R$ to a compact Riemann surface $R$ and a K\"ahler class $\alpha_R$ of $R$ such that $\alpha = \Phi^*\alpha_R$. 
In this case, $\mathcal{F}_\alpha$ is the foliation defined by the fibration $\Phi$, 
$K_\alpha^{\rm ess}=\emptyset$, $K_\alpha$ is included in the set of all the critical points of $\Phi$, and the set of all the singular points $(\Phi^{-1}(p))_{\rm sing}$ of the (set-theoretical) fiber $\Phi^{-1}(p)$ is included in $K_\alpha$ for any point $p\in R$. 
\item[Case I\!I] Not in the case I and $K_\alpha^{\rm ess}=\emptyset$. 
In this case, there exist an open covering $\{U_1, U_2\}$ of $X$ consisting of two domains and 
$\mathcal{F}_\alpha$-adaptive functions $h_j\colon \overline{U_j}\to [-\infty, +\infty]$ for each $j=1, 2$ such that, on each connected component $W$ of $U_1\cap U_2$, there exist constants $a_W, b_W\in \mathbb{R}$ such that $h_2=a_Wh_1+b_W$ holds on $W$. 
The foliation $\mathcal{F}_\alpha$ is defined on $X$, its tangent bundle is perpendicular to $\del h_j$ on $U_j$, and $K_\alpha\cap U_j=\{x\in U_j\mid (d h_j)_x=0\}$ holds for $j=1, 2$. 
\item[Case I\!I\!I] $K_\alpha^{\rm ess}\not=\emptyset$. 
In this case, $X\setminus K_\alpha^{\rm ess}$ is a domain of $X$ and there exists an $\mathcal{F}_\alpha$-adaptive function $h_\alpha\colon \overline{X\setminus K_\alpha^{\rm ess}}\to [-\infty, +\infty]$. The tangent bundle of the foliation $\mathcal{F}_\alpha$ on $X\setminus K_\alpha^{\rm ess}$ is perpendicular to $\del h_\alpha$, and $K_\alpha\setminus K_\alpha^{\rm ess}=\{x\in X\setminus K_\alpha^{\rm ess}\mid (dh_\alpha)_x=0\}$ holds. 
\end{description}
\end{theorem}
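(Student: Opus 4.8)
The plan is to equip $X\setminus K_\alpha$ with a transverse real-affine structure determined by $\alpha$, to extend it across the non-essential part of $K_\alpha$, and then to read off the trichotomy from its holonomy. First I would record the local normal form underlying Theorem \ref{thm:main1}. Since $\mathrm{nd}(\alpha)=1$, any $\theta\in\mathrm{SP}(\alpha)$ satisfies $\int_X\theta\wedge\theta\wedge\omega^{\wedge(n-2)}=0$, hence $\theta\wedge\theta=0$ pointwise, so $\theta$ has rank $\le 1$ and its kernel is the tangent distribution of $\mathcal{F}_\alpha$. In foliated coordinates $(z,w)$ on a flow box, closedness of $\theta$ forces $\theta=\sqrt{-1}\lambda(w,\bar w)\,dw\wedge d\bar w$, and any $\psi\in\mathrm{PSH}^\infty(X,\theta)$ is leafwise pluriharmonic, hence $\psi=\psi(w,\bar w)$ descends to a $\theta$-subharmonic function of the transverse variable. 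Comparing two distinct representatives (here $\#\mathrm{SP}(\alpha)>1$ is used) I would produce on each flow box a non-constant, leafwise-constant pluriharmonic function $h$, i.e.\ a harmonic function of $w$, well defined up to a real-affine change $t\mapsto at+b$ with $a>0$; its critical set $\{dh=0\}$ is a union of leaves and there agrees with $K_\alpha$. This is the local model of an $\mathcal{F}_\alpha$-adaptive function.

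Next I would glue these local models. The transition between overlapping local functions $h$ is real-affine, so the collection defines a transversely real-affine structure on $X\setminus K_\alpha$, equivalently a developing map $\mathrm{dev}\colon\widetilde{X\setminus K_\alpha}\to\mathbb{R}$ with a holonomy homomorphism $\rho\colon\pi_1(X\setminus K_\alpha)\to\mathrm{Aff}_+(\mathbb{R})$, $\rho(\gamma)\colon t\mapsto a_\gamma t+b_\gamma$; the real $1$-form $dh$ is globally defined up to the positive multiplicative cocycle $(a_\gamma)$, which is exactly the statement that the tangent bundle is perpendicular to $\del h_j$. I would then show $\mathcal{F}_\alpha$ extends across each non-essential component: by definition such a component has a neighborhood $W$ carrying an adaptive function $h_W$ with $W\cap K_\alpha$ relatively compact in $W$, and $\ker(\del h_W)$ supplies the extension, non-singular off the isolated locus $\{dh_W=0\}$. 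Removable-singularity and Hartogs-type arguments across the relatively compact $W\cap K_\alpha$ then yield a possibly singular holomorphic foliation on $X\setminus K_\alpha^{\mathrm{ess}}$, with $K_\alpha\setminus K_\alpha^{\mathrm{ess}}$ its critical locus.

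Finally I would classify by $K_\alpha^{\mathrm{ess}}$ and the holonomy $\rho$. If $K_\alpha^{\mathrm{ess}}\neq\emptyset$, I would argue that the maximal extension forces trivial linear holonomy and makes the developing map descend to a single-valued adaptive function $h_\alpha$ on $X\setminus K_\alpha^{\mathrm{ess}}$ whose two extreme level sets are carried by the essential components, giving Case III. If $K_\alpha^{\mathrm{ess}}=\emptyset$, the structure is defined on all of $X$, and I would split according to whether the leaves of $\mathcal{F}_\alpha$ are compact. When they are, the leaf space is a compact Riemann surface $R$, the quotient $\Phi\colon X\to R$ is holomorphic, proper and with connected fibers, and since $\theta$ descends to a Kähler form on $R$ one obtains $\alpha=\Phi^*\alpha_R$; the local computation then identifies $K_\alpha$ with a subset of the critical points of $\Phi$ and places $(\Phi^{-1}(p))_{\mathrm{sing}}$ inside $K_\alpha$, which is Case I. Otherwise the developing map is not single-valued, yet one can cover $X$ by two domains $U_1,U_2$ on each of which it restricts to an honest adaptive function $h_j$, the transitions being the affine maps $t\mapsto a_Wt+b_W$ on the components $W$ of $U_1\cap U_2$, which is Case II.

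The main obstacle is the passage from the transverse affine structure to genuine global objects. Producing the single-valued $h_\alpha$ in Case III requires controlling the developing map near $K_\alpha^{\mathrm{ess}}$ and ruling out nontrivial linear holonomy, while the compact/non-compact dichotomy of leaves separating Cases I and II must be made effective. The most delicate step is Case I, where one must upgrade a foliation with compact leaves and a transverse semi-positive structure to a holomorphic fibration over a curve, via properness, a Reeb-stability-type control of holonomy, and the descent of $\theta$, and then verify the precise assertions on critical points and singular fibers. The extension across non-essential components, and the proof that exactly the essential components obstruct a global adaptive function, are the remaining technically demanding points.
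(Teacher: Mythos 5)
Your proposal has the right overall shape (a trichotomy governed by adaptive functions with affine transitions), but it contains genuine gaps, and the most serious one is at the very start. The transversely real-affine structure you build on flow boxes --- a local leafwise-constant pluriharmonic $h$, canonical up to $t\mapsto at+b$ --- simply does not exist in Case I: for a product $F\times R$ with $\alpha$ pulled back from $R$, any ${\rm Re}\,F(w)$ for $F$ holomorphic in the transverse variable is leafwise-constant and pluriharmonic, and two such are not affinely related. The affine rigidity of local pluriharmonic first integrals (the paper's Lemma \ref{lem:S42first}) holds precisely under the hypothesis that every $\mathcal{F}_\alpha$-leafwise constant function on a connected component of a level set of $\psi$ is constant, i.e.\ case $(b)$ of Theorem \ref{thm:main_sec3}. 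So your construction presupposes that Case I has already been excluded, yet you propose to detect Case I afterwards from the affine structure via compactness of leaves; the logic is circular. Moreover, the paper's route into Case I is not ``compact leaves $\Rightarrow$ fibration'' (which you leave as a Reeb-stability heuristic) but an algebro-geometric argument (Lemma \ref{lem:fibration}): a non-constant leafwise-constant function on a level set yields three complex hypersurface leaves $Y_1,Y_2,Y_3$ with $[Y_2-Y_3]$ topologically trivial and trivial on $Y_1$, whence a fibration via either the Albanese map (\cite{CLPT}) or torsion of $[Y_2-Y_3]$ in ${\rm Pic}^0$. Nothing in your sketch produces this.

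The second essential gap is the global continuation mechanism. You invoke ``removable-singularity and Hartogs-type arguments'' and a developing map to extend across the non-essential part of $K_\alpha$ and to produce $h_\alpha$ in Case III with trivial linear holonomy, but you never explain how to continue the transverse structure across regions where \emph{every} $\psi\in{\rm PSH}^\infty(X,\theta)$ is critical, nor where the global hypotheses enter. The paper's engine (Lemmas \ref{lem:hodge_index_thm}, \ref{lem:4.5}, \ref{lem:natural}) is to manufacture two representatives $\theta_\pm\in{\rm SP}(\alpha)$ supported in collars of the two boundary components of a domain --- certified to lie in ${\rm SP}(\alpha)$ by the Hodge--Riemann bilinear relations of \cite{DN} --- and then to use the $\ddbar$-potential $f$ with $\theta_+=\theta_-+\sqrt{-1}\ddbar f$, which is pluriharmonic off the supports, as the analytic continuation of the adaptive function; the subsequent infimum construction $h_-=\inf_U h_U$ and the identification $M^\pm$ with $K_\alpha^{\rm ess}$ (Lemmas \ref{lem:5.5}--\ref{lem:M=Kess}) are where single-valuedness and the equality $K_\alpha\setminus K_\alpha^{\rm ess}=\{dh_\alpha=0\}$ are actually proved. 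Your proposal asserts these conclusions (trivial linear holonomy, the critical set of the local $h$ equalling $K_\alpha$, the two-domain cover in Case II) without an argument that uses $\#{\rm SP}(\alpha)>1$, the $\ddbar$-lemma, or the compact K\"ahler structure beyond the pointwise rank-one computation, and I do not see how to close these steps along the route you describe.
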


As a corollary, one has the following: 
\begin{theorem}\label{thm:main3}
Let $X$ be a connected compact K\"ahler manifold. 
Assume that there exists a $(1, 1)$-class $\alpha\in H^{1, 1}(X, \mathbb{R})$ with $\#{\rm SP}(\alpha)>1$ and ${\rm nd}(\alpha)=1$. 
Then $X$ admits uncountably many compact Levi-flat hypersurfaces of class $C^\omega$ (i.e. real analytic). 
\end{theorem}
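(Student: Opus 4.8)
The plan is to derive Theorem~\ref{thm:main3} as a direct corollary of Theorem~\ref{thm:main2}, since the latter already organizes the situation into three exhaustive cases and, in each, produces the analytic objects from which Levi-flat hypersurfaces can be read off. The key observation is that an $\mathcal{F}_\alpha$-adaptive function $h$ is a non-constant \emph{pluriharmonic} function on the interior of its domain, and the level sets $h^{-1}(c)$ of a pluriharmonic function are naturally real analytic Levi-flat hypersurfaces: locally $h=\mathrm{Re}\,f$ for a holomorphic function $f$, so each regular level set is foliated by the complex hypersurfaces $\{f=\text{const}\}$, which is exactly the Levi-flat (foliated by complex leaves) condition. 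Thus the strategy is to exhibit, in each of the three cases, a one-parameter family of regular values $c$ and to check that the corresponding level sets are compact, real analytic, and genuinely distinct, yielding uncountably many such hypersurfaces.

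First I would treat Case I, where $\alpha=\Phi^*\alpha_R$ for a surjective holomorphic map $\Phi\colon X\to R$ onto a compact Riemann surface. Here the generic fibers $\Phi^{-1}(p)$ are compact complex hypersurfaces, and I would use them to build Levi-flat hypersurfaces: for instance, pulling back via $\Phi$ a real analytic real curve (a one-real-dimensional real analytic arc) in $R$ through regular values gives a compact real analytic Levi-flat hypersurface in $X$ foliated by the fibers, and varying the curve (or varying $p$ along a real analytic arc in $R$) produces uncountably many distinct ones. In Cases~II and~III I would instead use the $\mathcal{F}_\alpha$-adaptive functions $h_j$ (resp.\ $h_\alpha$) directly. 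By Sard's theorem the set of critical values of $h$ is negligible, so the regular values form an uncountable set; for each regular value $c$ strictly between $\min_{\overline{W}}h$ and $\max_{\overline{W}}h$, the level set $H_c:=h^{-1}(c)$ is a smooth hypersurface, and by real analyticity of $h$ (pluriharmonic functions are real analytic) it is real analytic. Compactness follows because the $\mathcal{F}_\alpha$-adaptive condition forces $h^{-1}(\{\max_{\overline{W}}h,\min_{\overline{W}}h\})=\partial W$, so for $c$ strictly interior the level set stays in the compact set $\overline{W}$ and is closed in $X$, hence compact. In Case~II one must patch the local pieces $h_1,h_2$; since $h_2=a_Wh_1+b_W$ on overlaps, the level sets of $h_1$ and $h_2$ match up to reparametrization and glue to globally defined compact Levi-flat hypersurfaces in $X$.

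The Levi-flatness itself I would verify pointwise: near any point of a regular level set $H_c$, write $h=\mathrm{Re}\,f$ locally with $f$ holomorphic and $df\neq0$ (possible because $h$ is pluriharmonic and $(dh)_x\neq0$ at regular points), so $H_c=\{\mathrm{Re}\,f=c\}$ is foliated by the complex hypersurfaces $\{f=c+\sqrt{-1}t\}_{t\in\mathbb{R}}$, which exhibits $H_c$ as Levi-flat of class $C^\omega$. The last point is distinctness and uncountability: since $h$ is non-constant and continuous, it takes uncountably many values in its range, and distinct regular values give disjoint (hence distinct) level sets; after removing the measure-zero critical values one still has uncountably many regular values, so one obtains uncountably many pairwise disjoint compact real analytic Levi-flat hypersurfaces.

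The main obstacle I expect is the bookkeeping needed to guarantee \emph{compactness} and genuine distinctness of the level sets in the non-fibration cases, particularly Case~II, where $h_1$ and $h_2$ are only defined on the respective members $\overline{U_j}$ of an open cover and must be reconciled on the overlaps via the affine relation $h_2=a_Wh_1+b_W$; I would need to check that the gluing produces a \emph{globally} well-defined compact hypersurface rather than merely local pieces, and that the affine transition constants do not cause the level sets to degenerate or collapse across components of $U_1\cap U_2$. Once the level sets are shown to be compact and to vary genuinely with the parameter $c$, the real analyticity and Levi-flatness are essentially formal from the pluriharmonicity of $h$, so the crux of the argument is this global patching and the verification that uncountably many regular values survive in the appropriate open interval.
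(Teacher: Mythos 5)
Your argument is correct, and the two mechanisms you invoke --- level sets of a non-constant pluriharmonic $\mathcal{F}_\alpha$-adaptive function in the non-fibration cases, and preimages of real-analytic closed curves of regular values under $\Phi$ in the fibration case --- are exactly the ones the paper uses. The one structural difference is the input: you hang everything on Theorem \ref{thm:main2}, whereas the paper derives Theorem \ref{thm:main3} directly from Theorem \ref{thm:main_sec3}, the dichotomy $(a)$/$(b)$ already established in \S\ref{sec:3}. That dichotomy supplies, for a single choice of $\theta\in{\rm SP}(\alpha)$ and non-constant $\psi\in{\rm PSH}^\infty(X,\theta)$, either a fibration $\Phi\colon X\to R$ (case $(a)$: take preimages of almost all $C^\omega$ Jordan loops in $R$) or an $\mathcal{F}(\theta,\psi)$-adaptive function $h=\chi_U\circ\psi$ on a component $U$ of $U_\psi$ (case $(b)$: here every $r\in{\rm Image}\,\chi_U$ is automatically a regular value because $U\subset\psi^{-1}((I_\psi)_{\rm reg})$, so no appeal to Sard or to \cite{SS} is even needed). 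The paper therefore gets Theorem \ref{thm:main3} for free in \S 3, while your version makes it depend on all of \S\S 4--5; this is logically harmless --- the proof of Theorem \ref{thm:main2} nowhere uses Theorem \ref{thm:main3}, so there is no circularity --- just heavier than necessary. Two small corrections: in Case I you must pull back \emph{closed} Jordan loops rather than arcs, since the preimage of an arc with endpoints is a manifold with boundary, not a compact hypersurface; and your anticipated obstacle about gluing in Case I\!I is moot, because by the properness lemma of \S\ref{sec:fund_obs_F-adaptive} each level set $h_j^{-1}(c)$ at a value $c$ strictly between $\min_{\overline{U_j}}h_j$ and $\max_{\overline{U_j}}h_j$ is already a compact subset of $U_j$ and hence a globally closed compact hypersurface of $X$ on its own, with no patching across $U_1\cap U_2$ required.
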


Let $Y$ be a non-singular hypersurface of $X$ 
such that the normal bundle $N_{Y/X}=[Y]|_Y$ is unitary flat (i.e. $N_{Y/X}\in H^1(Y, \mathrm{U}(1))$, where $\mathrm{U}(1):=\{t\in \mathbb{C}\mid |t|=1\}$), where $[Y]$ is the holomorphic line bundle on $X$ which corresponds to the divisor $Y$. 
Note that the first Chern class $c_1([Y])$ of $[Y]$ satisfies ${\rm nd}(c_1([Y]))=1$ in this case. 
Our motivation comes from the study of the relation between {\it the semi-positivity of $[Y]$} (i.e. 
non-emptiness of ${\rm SP}(c_1([Y]))$) and the complex analytic structure of a neighborhood of $Y$. 
In \cite[Conjecture 2.1]{K2019}, we conjectured that $[Y]$ is semi-positive if and only if the pair $(Y, X)$ is of class ($\beta'$) or  ($\beta''$) in the classification of Ueda \cite{U}. 
The following corollary, which follows from \cite[Theorem 1.4]{K2020} and the argument in the proof of Theorem \ref{thm:main3}, gives an affirmative answer to \cite[Conjecture 2.1]{K2019} when $Y$ is non-singular. 
\begin{corollary}\label{cor:main}
Let $X$ be a connected compact K\"ahler manifold and 
$Y\subset X$ be a non-singular connected hypersurface such that $N_{Y/X}$ is unitary flat. 
Then $[Y]$ is semi-positive if and only if there exists a neighborhood $V$ of $Y$ such that $[Y]|_V$ is unitary flat: i.e. there exists a non-singular holomorhic foliation on $V$ which has $Y$ as a leaf along which the holonomy is $\mathrm{U}(1)$-linear. 
\end{corollary}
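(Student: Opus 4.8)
The plan is to set $\alpha:=c_1([Y])$, for which ${\rm nd}(\alpha)=1$ holds by hypothesis, and to prove the two implications separately. For the implication that a unitary flat neighborhood forces semi-positivity I would simply invoke \cite[Theorem 1.4]{K2020}: a flat Hermitian metric on $[Y]|_V$ is glued from local defining functions by $\U$-valued constants, and that result upgrades it to a global smooth metric on $[Y]$ of semi-positive curvature by means of the compactness of $X$, so that ${\rm SP}(\alpha)\neq\emptyset$. The substance of the corollary is therefore the converse, and for this I would run the machinery of Theorems \ref{thm:main1}--\ref{thm:main3}.

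Assume then that $[Y]$ is semi-positive and fix $\theta\in{\rm SP}(\alpha)$. The first observation is that $i_Y^*\theta'\equiv0$ for every $\theta'\in{\rm SP}(\alpha)$: restricted to the compact $Y$ the form $i_Y^*\theta'$ is semi-positive and represents $c_1(N_{Y/X})=0$, so $\int_Y i_Y^*\theta'\wedge\omega_Y^{\dim X-2}=0$ with non-negative integrand forces $i_Y^*\theta'\equiv0$. By the characterization in Theorem \ref{thm:main1}, each connected piece of $Y\setminus K_\alpha$ is then a leaf of $\mathcal{F}_\alpha$. Before applying the theorems I must check the standing hypothesis $\#{\rm SP}(\alpha)>1$; this I would obtain by exhibiting a non-constant smooth $\theta$-plurisubharmonic function concentrated near $Y$, built from a flat metric on $N_{Y/X}$ extended across a tubular neighborhood (its curvature vanishing along $Y$), so that a small such bump keeps $\theta$ semi-positive. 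With this in hand Theorems \ref{thm:main1}--\ref{thm:main3} become available, and I read off the local structure near $Y$ from the trichotomy of Theorem \ref{thm:main2}.

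The goal is to produce, on a full neighborhood $V$ of $Y$, a non-singular $\mathcal{F}_\alpha$ having $Y$ as a compact leaf together with an $\mathcal{F}_\alpha$-adaptive function $h$ on $W:=V\setminus Y$ (which is connected, $Y$ having real codimension two) with $Y=h^{-1}(+\infty)$. In Case I this is immediate: $Y$ must be $\Phi$-saturated, hence coincides with a fibre $\Phi^{-1}(p)$, and being smooth it avoids the critical locus, giving a product-like, trivially foliated neighborhood with $\U$-linear (indeed trivial) holonomy. Granting $h$ in general, let $F=h+\sqrt{-1}\,\widetilde h$ be the local multivalued holomorphic function with ${\rm Re}\,F=h$; the leaves of $\mathcal{F}_\alpha$ are the level sets $\{F={\rm const}\}$ and $s_0:=e^{-F}$ is a multivalued holomorphic defining function of $Y$. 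Since $h$ is single-valued, a loop $\gamma$ in the leaf $Y$ shifts $F$ only by $\sqrt{-1}\oint_\gamma d\widetilde h\in\sqrt{-1}\,\mathbb{R}$, so $s_0\mapsto \exp(-\sqrt{-1}\oint_\gamma d\widetilde h)\,s_0$, multiplication by a constant in $\U$. Thus $s_0$ glues to a $\U$-flat trivialization of $[Y]$ on $V$, realizing $Y=\{s_0=0\}$ as a leaf with $\U$-linear holonomy, which is the assertion.

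I expect the main obstacle to be precisely the step of producing $h$ near $Y$ in Cases II and III, that is, showing $Y$ is disjoint from $K_\alpha^{\rm ess}$ and that $\mathcal{F}_\alpha$ is non-singular along $Y$. Concretely, one must show that the semi-positivity of $[Y]$ --- equivalently the pointwise rank-$\le 1$ degeneracy of $\theta$ around the compact leaf $Y$ forced by ${\rm nd}(\alpha)=1$ --- rules out the non-linearizable (obstructed) case of Ueda's classification \cite{U} and linearizes the transverse holonomy into $\U$ rather than leaving a parabolic part. This is exactly the point at which the full force of Theorem \ref{thm:main2} and the Levi-flat construction underlying the proof of Theorem \ref{thm:main3} must be brought to bear; the holonomy computation of the previous paragraph is then only the final bookkeeping.
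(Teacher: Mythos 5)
There is a genuine gap, and you have in fact located it yourself in your last paragraph: the linearization of the transverse holonomy along $Y$ is not something that Theorems \ref{thm:main1}--\ref{thm:main3} can deliver, because in the essential case ($N_{Y/X}^m$ not holomorphically trivial for any $m$) one has $Y\subset K_\alpha^{\rm ess}$ (see the end of \S \ref{sec:6}). Consequently the foliation of Theorem \ref{thm:main1} and the adaptive function of Theorem \ref{thm:main2}, Case I\!I\!I, live only on $X\setminus K_\alpha^{\rm ess}$ and say nothing a priori about a punctured neighborhood of $Y$: it is not known in advance that $h_\alpha\to+\infty$ at $Y$, nor that its periods there are purely imaginary with the right asymptotics $h_\alpha+\log|s_Y|^2=O(1)$; indeed the boundary behavior of $h_\alpha$ at components of $K_\alpha^{\rm ess}$ is precisely what Question 7.3 of the paper leaves open. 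Your plan of reading off $e^{-F}$ from an adaptive function near $Y$ is therefore circular in the case that matters: the identity $h_\alpha=-\log|s_Y|_h^2$ near $Y$ is derived in \S \ref{sec:6} only \emph{after} the corollary is established.

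The paper closes this gap by an entirely different reduction. For the nontrivial direction it invokes \cite[Theorem 1.4]{K2020} (which packages the hard holonomy linearization via P\'erez-Marco's theory): it suffices to exhibit a connected neighborhood $\Omega$ of $Y$ whose boundary is a compact Levi-flat hypersurface of class $C^2$. Such an $\Omega$ is produced from the explicit global function $\psi=\log(1+|s_Y|_h^2)\in{\rm PSH}^\infty(X,\theta)$ (which also settles $\#{\rm SP}(\alpha)>1$, as in your sketch): every sufficiently small level $\{\psi=\ve\}$ consists of regular points, and Theorem \ref{thm:main_sec3} either puts one in case $(b)$, where $\{\psi=\ve\}$ is Levi-flat and $\Omega:=\{\psi<\ve\}$ works, or in case $(a)$, where an intersection-number argument forces $\Phi(Y)$ to be a point and hence $N_{Y/X}^m$ to be holomorphically trivial, contradicting the standing assumption (that subcase being handled separately by \cite[Theorem 1.1 $(i)$]{K2020}). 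Your proposal correctly sets up the easy direction, the class $\alpha$, and the final $\mathrm{U}(1)$-period bookkeeping, but without the reduction to a Levi-flat boundary and the external input of \cite[Theorem 1.4]{K2020} the central step remains unproved.
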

Note that 
Ohsawa pointed out in \cite[Remark 5.2]{O} that Corollary \ref{cor:main} for a surface $X$ can be shown by combining \cite[Theorem 1.4]{K2020} and Siu's solution \cite{S} of Grauert--Riemenschneider's conjecture (K\"ahlerness assumption is not needed in his proof). 
Note also that this kinds of results can be regarded as a generalization of Brunella's theorem \cite{B} for the blow-up of the projective plane at general nine points. See \S \ref{sec:6} and \S \ref{sec:7} for details. 

The foliation $\mathcal{F}_\alpha$ is constructed by considering the eigenvectors which belongs to the eigenvalue zero of each element of ${\rm SP}(\alpha)$, 
or equivalently, by considering {\it the Monge--Amp\`ere foliation} for each element $\psi\in {\rm PSH}^\infty(X, \theta)$ for an element $\theta\in {\rm SP}(\alpha)$. 
In \S \ref{sec:3}, we will show that $\sqrt{-1}\ddbar \psi = g_\psi\cdot \sqrt{-1}\del\psi\wedge \delbar \psi$ holds for an $\mathcal{F}_\alpha$-leafwise constant function $g_\psi$ on a suitable domain of $X$ essentially by a linear-algebraic arguments. 
When $g_\psi$ is a non-constant function on some level set of $\psi$, we show that the situation is in Case I. 
When $g_\psi$ is constant on any level set of $\psi$, $g_\psi=\chi\circ \psi$ holds on a domain for some real function $\chi$. By considering a solution $G$ of a suitable ordinary differential equation concerning on $\chi$, one can see that $h_0:=G\circ \psi$ is a pluriharmonic function. In this case, we show that the situation is either Case I\!I or I\!I\!I by considering the analytic continuation of $h_0$. 

The organization of the paper is as follows. 
In \S 2, we collect some fundamental facts and known results. 
In \S 3, we investigate the relation between the level sets of a non-constant element of ${\rm PSH}^\infty(X, \theta)$ and Monge--Amp\`ere foliation for an element $\theta\in {\rm SP}(\alpha)$. The main result in this section is Theorem \ref{thm:main_sec3}, in which we classify the situation into two cases $(a)$ and $(b)$ according to the constantness or non-constantness of $g_\psi$ on each level set of $\psi$, where $\psi$ and $g_\psi$ are the functions as above. Here we also show Theorem \ref{thm:main3}. 
In \S 4, we investigate the cases $(a)$ and $(b)$ in Theorem \ref{thm:main_sec3} as a  preliminary step for the proof of main results. 
In \S 5, we show Theorems \ref{thm:main1} and \ref{thm:main2}. 
In \S 6, we investigate the case where the class $\alpha$ is the first Chern class of the line bundle $[Y]$ for a hypersurface $Y$ on $X$. Here we show Corollary \ref{cor:main}. 
In \S 7, we give some examples. 
\vskip3mm
{\bf Acknowledgment. } 
The author would like to give thanks to Prof. Noboru Ogawa for useful discussions. 
He is also grateful to Prof. Valentino Tosatti for his very useful comment, by which the condition that $X$ is either a surface or a projective manifold could be dropped in Theorems \ref{thm:main1} and \ref{thm:main2}. 
The author is supported by JSPS KAKENHI Grant Number 20K14313 and by a program: Leading Initiative for Excellent Young Researchers (LEADER, No. J171000201). 

\section{Preliminaries} \label{sec:preliminaries}

\subsection{On the set ${\rm PSH}^\infty(X, \theta)$}
Let $X$ be a compact K\"ahler manifold and $\theta$ be a $d$-closed $C^\infty$ $(1, 1)$-form on $X$. 

\begin{lemma}\label{lem:psh^infty}
Let $\vp$ and $\psi$ be elements of ${\rm PSH}^\infty(X, \theta)$. \\
$(i)$ Let $\chi\colon \mathbb{R}^2\to \mathbb{R}$ be a function of class $C^\infty$. 
Assume that $\chi$ is non-decreasing in each variable, convex, and that $\chi(p+r, q+r)=\chi(p, q)+r$ for any $p, q, r\in \mathbb{R}$. Then the function $\chi(\vp, \psi)\colon x\mapsto \chi(\vp(x), \psi(x))$ is also an element of ${\rm PSH}^\infty(X, \theta)$. \\
$(ii)$ The function $\log(e^\vp+e^\psi)\colon x\mapsto \log(e^{\vp(x)}+e^{\psi(x)})$ is an element of ${\rm PSH}^\infty(X, \theta)$. 
\end{lemma}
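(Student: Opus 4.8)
The plan is to prove (i) by a direct chain-rule computation of $\sqrt{-1}\ddbar(\chi(\vp,\psi))$, and then to obtain (ii) by checking that the specific function $\chi(p,q)=\log(e^p+e^q)$ satisfies all the hypotheses of (i). For (i), set $\Psi := \chi(\vp,\psi)$, which is manifestly of class $C^\infty$, so that the only content is the inequality $\theta + \sqrt{-1}\ddbar\Psi \geq 0$. Writing $\chi_p,\chi_q$ for the first partials and $\chi_{pp},\chi_{pq},\chi_{qq}$ for the second partials of $\chi$ evaluated at $(\vp,\psi)$, the chain rule gives
\[
\sqrt{-1}\ddbar\Psi = \chi_p\cdot\sqrt{-1}\ddbar\vp + \chi_q\cdot\sqrt{-1}\ddbar\psi + Q,
\]
where $Q$ collects the second-order terms
\[
Q = \sqrt{-1}\bigl(\chi_{pp}\,\del\vp\wedge\delbar\vp + \chi_{pq}(\del\vp\wedge\delbar\psi + \del\psi\wedge\delbar\vp) + \chi_{qq}\,\del\psi\wedge\delbar\psi\bigr).
\]

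Differentiating the normalization $\chi(p+r,q+r)=\chi(p,q)+r$ in $r$ at $r=0$ yields $\chi_p+\chi_q\equiv 1$, so that $\theta = \chi_p\theta+\chi_q\theta$ and hence
\[
\theta + \sqrt{-1}\ddbar\Psi = \chi_p\bigl(\theta+\sqrt{-1}\ddbar\vp\bigr) + \chi_q\bigl(\theta+\sqrt{-1}\ddbar\psi\bigr) + Q.
\]
The monotonicity of $\chi$ gives $\chi_p,\chi_q\geq 0$, so the first two summands are non-negative because $\vp,\psi\in {\rm PSH}^\infty(X,\theta)$, and it remains to show $Q\geq 0$. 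At each point, $Q$ is the Hermitian $(1,1)$-form whose coefficient matrix, relative to the pair of $(1,0)$-forms $(\del\vp,\del\psi)$, is exactly the Hessian $\left(\begin{smallmatrix}\chi_{pp} & \chi_{pq}\\ \chi_{pq} & \chi_{qq}\end{smallmatrix}\right)$ of $\chi$; since $\chi$ is convex this matrix is real symmetric positive semi-definite. Diagonalizing it by a real orthogonal change of basis writes $Q$ as a non-negative combination of forms of the shape $\sqrt{-1}\,\eta\wedge\bar\eta\geq 0$, which gives $Q\geq 0$ and completes (i).

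For (ii), I would apply (i) with $\chi(p,q)=\log(e^p+e^q)$. Smoothness and the normalization $\log(e^{p+r}+e^{q+r})=\log(e^p+e^q)+r$ are immediate; the first partials $\chi_p=e^p/(e^p+e^q)$ and $\chi_q=e^q/(e^p+e^q)$ are positive, giving monotonicity; and a short computation shows the Hessian equals
\[
\frac{e^pe^q}{(e^p+e^q)^2}\begin{pmatrix}1 & -1\\ -1 & 1\end{pmatrix},
\]
which is positive semi-definite, giving convexity. Thus all hypotheses of (i) hold and $\log(e^\vp+e^\psi)=\chi(\vp,\psi)\in {\rm PSH}^\infty(X,\theta)$.

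The only genuinely delicate point is the verification that $Q\geq 0$: one must recognize that the assorted second-order terms reorganize into the Hessian quadratic form of $\chi$ paired against $(\del\vp,\del\psi)$, so that convexity of $\chi$ — and not any further property of $\vp$ or $\psi$ — is precisely what is needed, while the splitting off of $\chi_p(\theta+\sqrt{-1}\ddbar\vp)+\chi_q(\theta+\sqrt{-1}\ddbar\psi)$ hinges on the normalization identity $\chi_p+\chi_q\equiv 1$. Everything else is routine bookkeeping with the chain rule.
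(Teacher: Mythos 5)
Your proof is correct, but it takes a different route from the paper. The paper argues locally: on a small ball $B$ one solves $\theta|_B=\sqrt{-1}\ddbar f$ (using $H^1(B,\mathcal{O}_B)=0$), observes that the normalization $\chi(p+r,q+r)=\chi(p,q)+r$ gives $f+\chi(\vp,\psi)=\chi(f+\vp,f+\psi)$, and then invokes the standard fact that a convex function, non-decreasing in each variable, of plurisubharmonic functions is plurisubharmonic; part (ii) is then the specialization $\chi(s,t)=\log(e^s+e^t)$, exactly as you do. You instead carry out the chain-rule computation globally, using the differentiated normalization $\chi_p+\chi_q\equiv 1$ to split $\theta+\sqrt{-1}\ddbar\chi(\vp,\psi)$ into $\chi_p(\theta+\sqrt{-1}\ddbar\vp)+\chi_q(\theta+\sqrt{-1}\ddbar\psi)+Q$ and identifying $Q$ with the Hessian form of $\chi$ paired against $(\del\vp,\del\psi)$. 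Both uses of the normalization are sound, and your positivity argument for $Q$ (real orthogonal diagonalization of the positive semi-definite Hessian, yielding a non-negative combination of terms $\sqrt{-1}\,\eta\wedge\bar\eta$) is valid. What the paper's route buys is brevity and robustness — it sidesteps all second-order bookkeeping and would apply verbatim to non-smooth $\theta$-psh functions; what your route buys is an explicit curvature decomposition, which in the special case of (ii) is precisely the identity (\ref{eq:ddbar_logexp}) that the paper records separately and uses repeatedly later (e.g.\ in the proof of Lemma \ref{lem:key_pointwise}), so your computation is in effect the general form of that formula.
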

See also \cite[Proposition 2.3]{GZ} for the assertion $(ii)$. 
Note that, in \cite{GZ}, this assertion is proven directly by using the formulae
\begin{equation}\label{eq:del_logexp}
\del \log(e^\vp+e^\psi)
= \frac{e^\vp\del \vp + e^\psi \del\psi}{e^\vp+e^\psi}
\end{equation}
and 
\begin{equation}\label{eq:ddbar_logexp}
\sqrt{-1}\ddbar \log(e^\vp+e^\psi)
= \frac{e^\vp\sqrt{-1}\ddbar\vp+e^\psi\sqrt{-1}\ddbar\psi}{e^\vp+e^\psi}
+ \frac{e^{\vp+\psi}\sqrt{-1}\del(\vp-\psi)\wedge \delbar (\vp-\psi)}{(e^\vp+e^\psi)^2}. 
\end{equation}

\begin{proof}[Proof of Lemma \ref{lem:psh^infty}]
$(i)$ It is enough to show that, for each point $x\in X$, there exists a neighborhood $B$ of $x$ such that $\chi(\vp, \psi)|_B$ satisfies $\theta|_B+\sqrt{-1}\ddbar \chi(\vp, \psi)|_B\geq 0$. 
Take $B$ so that it is a small open ball in a coordinates system around $x$. 
Then, as $H^1(B, \mathcal{O}_B)=0$ by Oka's vanishing, one can easily deduce the existence of a function $f$ on $B$ such that $\theta|_B=\sqrt{-1}\ddbar f$. 
As $\vp$ and $\psi$ are elements of ${\rm PSH}^\infty(X, \theta)$, both $f+\vp$ and $f+\psi$ are plurisubharmonic on $B$. Therefore, by the assumption, $f+\chi(\vp, \psi) = \chi(f+\vp, f+\psi)$ is also plurisubharmonic on $B$, which proves the assertion. \\
$(ii)$ The assertion follows by considering $\chi(s, t):=\log(e^s+e^t)$. 
\end{proof}

Next, let us see some fundamental properties of the set ${\rm PSH}^\infty(X, \theta)$ when $\theta \in {\rm SP}(\alpha)$ for a class $\alpha\in H^{1, 1}(X, \mathbb{R})$ with ${\rm nd}(\alpha)=1$. 

\begin{lemma}\label{lem:2.3}
Let $X$ be a compact K\"ahler manifold of dimension $n$ and $\alpha\in H^{1, 1}(X, \mathbb{R})$ be a class such that ${\rm nd}(\alpha)=1$. 
Take $\theta\in {\rm SP}(\alpha)$. \\
$(i)$ For $\vp, \psi\in {\rm PSH}^\infty(X, \theta)$, it holds that $(\theta+\sqrt{-1}\ddbar \vp)\wedge (\theta+\sqrt{-1}\ddbar \psi)\equiv 0$. \\
$(ii)$ For $\vp\in {\rm PSH}^\infty(X, \theta)$, $\theta \wedge \ddbar \vp\equiv 0$ and $(\theta+\sqrt{-1}\ddbar \vp)^{2}:=(\theta+\sqrt{-1}\ddbar \vp)^{\wedge 2}\equiv 0$ hold. \\
$(iii)$ For $\vp, \psi\in {\rm PSH}^\infty(X, \theta)$, it holds that $\ddbar \vp\wedge\ddbar \psi\equiv 0$. 
\end{lemma}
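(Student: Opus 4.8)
The plan is to reduce everything to assertion $(i)$ and then exploit the numerical-dimension hypothesis ${\rm nd}(\alpha)=1$, which forces $\alpha^{\wedge 2}=0$ in $H^{2,2}(X,\mathbb{C})$. First I would establish $(i)$. Both $\theta+\sqrt{-1}\ddbar\vp$ and $\theta+\sqrt{-1}\ddbar\psi$ are smooth semi-positive $(1,1)$-forms representing $\alpha$, so their wedge product is a smooth semi-positive $(2,2)$-form whose cohomology class is $\alpha^{\wedge 2}=0$ (since ${\rm nd}(\alpha)=1$). On a compact K\"ahler manifold, a $d$-closed semi-positive $(2,2)$-form that is cohomologous to zero must itself vanish identically: pairing it against a power of the K\"ahler form $\omega^{\wedge(n-2)}$ gives a nonnegative integrand whose total integral is zero (because the class is zero), hence the form is zero pointwise by the semi-positivity. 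I would carry out this positivity-plus-vanishing-cohomology argument carefully, as it is the crux of the whole lemma.

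Next, $(ii)$ follows from $(i)$ by specialization. Taking $\psi=\vp$ in $(i)$ gives $(\theta+\sqrt{-1}\ddbar\vp)^{\wedge 2}\equiv 0$ immediately. Taking instead $\psi=0$ (the constant function $0\in{\rm PSH}^\infty(X,\theta)$ since $\theta\geq 0$) yields $(\theta+\sqrt{-1}\ddbar\vp)\wedge\theta\equiv 0$, that is $\theta^{\wedge 2}+\theta\wedge\sqrt{-1}\ddbar\vp\equiv 0$. Applying $(i)$ once more with $\vp=\psi=0$ gives $\theta^{\wedge 2}\equiv 0$, and subtracting leaves $\theta\wedge\sqrt{-1}\ddbar\vp\equiv 0$, which is the asserted $\theta\wedge\ddbar\vp\equiv 0$ up to the constant $\sqrt{-1}$.

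Finally, $(iii)$ is a polarization argument. Expanding $(i)$ directly,
\[
0\equiv(\theta+\sqrt{-1}\ddbar\vp)\wedge(\theta+\sqrt{-1}\ddbar\psi)
=\theta^{\wedge 2}+\theta\wedge\sqrt{-1}\ddbar\psi+\sqrt{-1}\ddbar\vp\wedge\theta+\sqrt{-1}\ddbar\vp\wedge\sqrt{-1}\ddbar\psi.
\]
By $(ii)$ the first three terms on the right all vanish identically (the first is $\theta^{\wedge 2}\equiv 0$, and the middle two are $\theta\wedge\sqrt{-1}\ddbar\psi\equiv 0$ and $\theta\wedge\sqrt{-1}\ddbar\vp\equiv 0$). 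Hence the last term vanishes, giving $\sqrt{-1}\ddbar\vp\wedge\sqrt{-1}\ddbar\psi\equiv 0$, equivalently $\ddbar\vp\wedge\ddbar\psi\equiv 0$.

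I expect the main obstacle to be the rigorous justification in $(i)$ that a semi-positive form cohomologous to zero vanishes pointwise: one must be careful that the wedge of two semi-positive $(1,1)$-forms is genuinely a semi-positive $(2,2)$-form (true because semi-positive $(1,1)$-forms form a cone closed under wedge in the sense of positivity of $(k,k)$-forms), and that the integral pairing against $\omega^{\wedge(n-2)}$ detects pointwise vanishing. Everything else is formal manipulation once $(i)$ is in hand.
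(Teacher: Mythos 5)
Your proposal is correct and follows essentially the same route as the paper: integrate the wedge product against $\omega^{n-2}$, use ${\rm nd}(\alpha)=1$ to conclude the integral vanishes, deduce pointwise vanishing from semi-positivity, and then obtain $(ii)$ and $(iii)$ by specializing and polarizing $(i)$. The one step you flag as the crux---that vanishing of the pairing with $\omega^{n-2}$ forces the wedge of two semi-positive $(1,1)$-forms to vanish pointwise---is exactly what the paper supplies via a pointwise simultaneous diagonalization together with its Lemma \ref{lem:lin_alg}.
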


\begin{proof}
$(i)$ Take a K\"ahler form $\omega$ of $X$. As 
\[
\int_X(\theta+\sqrt{-1}\ddbar \vp)\wedge (\theta+\sqrt{-1}\ddbar \psi)\wedge \omega^{n-2}=0
\]
by the assumption ${\rm nd}(\alpha)=1$, it follows from the semi-positivity of the forms $\theta+\sqrt{-1}\ddbar \vp$ and $\theta+\sqrt{-1}\ddbar \psi$ that $(\theta+\sqrt{-1}\ddbar \vp)\wedge (\theta+\sqrt{-1}\ddbar \psi)\equiv 0$ (More precisely, take a suitable coordinates $(z^1, z^2, \dots, z^n)$ around each point such that both $\omega$ and $\theta+\sqrt{-1}\ddbar \vp$ are represented by diagonal matrices at the point and apply Lemma \ref{lem:lin_alg}). \\
$(ii)$ Note that $\theta^2\equiv 0$ follows from the assertion $(i)$ (consider the case where $\vp\equiv \psi\equiv 0$). 
The assertion $(ii)$ follows from $(i)$ by considering the cases $(\vp, \psi)=(0, \vp)$ and $(\vp, \psi)=(\vp, \vp)$. \\
$(iii)$ The assertion $(iii)$ follows from $(i)$ and $(ii)$. 
\end{proof}

\subsection{On the set $K_\alpha$}

In this subsection, we show the following: 
\begin{lemma}\label{lem:K_alpha}
Let $X$ be a compact K\"ahler manifold and $\alpha\in H^{1, 1}(X, \mathbb{R})$ be a class such that ${\rm nd}(\alpha)=1$ and $\#{\rm SP}(\alpha)>1$. 
Then, for any $\theta\in {\rm SP}(\alpha)$, it follows that 
\[
K_\alpha = \bigcap_{\psi\in {\rm PSH}^\infty(X, \theta)}\{x\in X\mid (d\psi)_x=0\}. 
\]
\end{lemma}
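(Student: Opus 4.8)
The plan is to fix an arbitrary $\theta\in {\rm SP}(\alpha)$, abbreviate
\[
K_\theta := \bigcap_{\psi\in {\rm PSH}^\infty(X, \theta)}\{x\in X\mid (d\psi)_x=0\},
\]
and prove $K_\alpha = K_\theta$. One inclusion is immediate from the definition of $K_\alpha$: since $K_\alpha$ is the intersection of the sets $K_{\theta'}$ over all $\theta'\in {\rm SP}(\alpha)$ and $\theta$ is one such representative, we have $K_\alpha\subseteq K_\theta$. Thus the entire content lies in the reverse inclusion $K_\theta\subseteq K_\alpha$, which will follow once I check that $K_\theta\subseteq K_{\theta'}$ for every $\theta'\in {\rm SP}(\alpha)$, since then $K_\theta\subseteq \bigcap_{\theta'}K_{\theta'}=K_\alpha$.

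The key idea is to compare the two representatives via the $\ddbar$-lemma. Given another $\theta'\in {\rm SP}(\alpha)$, both $\theta$ and $\theta'$ are $d$-closed $(1,1)$-forms representing the same real class $\alpha$ on the compact K\"ahler manifold $X$, so the $\ddbar$-lemma furnishes a real-valued $\rho\in C^\infty(X)$ with $\theta' = \theta+\sqrt{-1}\ddbar\rho$. I would then record two membership facts, both reading off directly from the definition of ${\rm PSH}^\infty$: first, $\rho\in {\rm PSH}^\infty(X, \theta)$, because $\theta+\sqrt{-1}\ddbar\rho=\theta'\geq 0$; and second, $\rho+\psi'\in {\rm PSH}^\infty(X, \theta)$ for every $\psi'\in {\rm PSH}^\infty(X, \theta')$, because $\theta+\sqrt{-1}\ddbar(\rho+\psi')=\theta'+\sqrt{-1}\ddbar\psi'\geq 0$.

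With these two facts the inclusion reduces to a one-line cancellation. Let $x\in K_\theta$ and let $\psi'\in {\rm PSH}^\infty(X, \theta')$ be arbitrary. Applying the defining property of $K_\theta$ to the two $\theta$-plurisubharmonic functions $\rho$ and $\rho+\psi'$ gives $(d\rho)_x=0$ and $(d(\rho+\psi'))_x=0$; subtracting yields $(d\psi')_x=0$. As $\psi'$ was arbitrary, $x\in K_{\theta'}$, whence $K_\theta\subseteq K_{\theta'}$, and intersecting over all $\theta'\in {\rm SP}(\alpha)$ completes the argument.

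I do not expect a genuine obstacle: the mechanism is simply that adding the fixed potential $\rho$ intertwines ${\rm PSH}^\infty(X, \theta')$ with a subset of ${\rm PSH}^\infty(X, \theta)$, while the unwanted contribution $d\rho$ is itself annihilated on $K_\theta$ precisely because $\rho$ lies in ${\rm PSH}^\infty(X, \theta)$. The only point demanding care is verifying the two membership claims cleanly from the definitions. It is worth remarking that this equality uses neither ${\rm nd}(\alpha)=1$ nor $\#{\rm SP}(\alpha)>1$ in an essential way, beyond the implicit guarantee that ${\rm SP}(\alpha)\neq\emptyset$ so that a reference form $\theta$ exists.
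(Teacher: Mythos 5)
Your proposal is correct and is essentially identical to the paper's own proof: both use the $\ddbar$-lemma to write $\theta'=\theta+\sqrt{-1}\ddbar\rho$, observe that $\rho$ and $\rho+\psi'$ lie in ${\rm PSH}^\infty(X,\theta)$, and cancel $(d\rho)_x$ to conclude. Your closing remark that the hypotheses ${\rm nd}(\alpha)=1$ and $\#{\rm SP}(\alpha)>1$ are not essentially used here is also accurate.
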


\begin{proof}
Denote by $K_\theta$ the set in the right hand side. 
As the inclusion $K_\alpha\subset K_\theta$ is clear by definition, here we show the other inclusion. 
Take $x\in K_\theta$, $\theta'\in {\rm SP}(\alpha)$, and $\psi\in {\rm PSH}^\infty(X, \theta')$. 
By the $\ddbar$-lemma, there exists a function $f$ on $X$ such that $\theta'=\theta+\sqrt{-1}\ddbar f$. 
By the regularity theorem, one has that $f$ is of class $C^\infty$. 
Thus both the functions $f$ and $f+\psi$ are elements of ${\rm PSH}^\infty(X, \theta)$. 
As $x$ is an element of $K_\theta$, one has that $(d\psi)_x=(d(f+\psi))_x-(df)_x=0$, 
from which it follows that $x\in K_\alpha$, since $\theta'$ and $\psi$ are arbitrary. 
\end{proof}

\subsection{Fundamental observation on $\mathcal{F}_\alpha$-adaptive function}\label{sec:fund_obs_F-adaptive}

Let $X$ be a compact complex manifold of dimension $n$. 
In this subsection, we consider a domain $W$ of $X$ and a continuous function $h\colon \overline{W}\to [-\infty, +\infty]$ such that 
$h|_W$ is a non-constant pluriharmonic function and that $h^{-1}(\{\textstyle\max_{\overline{W}}h, \textstyle\min_{\overline{W}}h\})=\del W$. 
Note that $h$ is $\mathcal{F}_\alpha$-adaptive in the sense of \S 1 if ($\mathcal{F}_\alpha$ as in Theorem \ref{thm:main1} actually exists and) it is $\mathcal{F}_\alpha$-leafwise constant. 
Note also that the image $h(W)$ coincides with the open interval $(\textstyle\min_{\overline{W}}h, \textstyle\max_{\overline{W}}h)$. 
First let us show the following: 
\begin{lemma}
The function $h|_W\colon W\to (\textstyle\min_{\overline{W}}h, \textstyle\max_{\overline{W}}h)$ is proper. 
\end{lemma}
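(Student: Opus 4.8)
The plan is to reduce properness to the compactness of $\overline{W}$ in $X$ together with the defining hypothesis $h^{-1}(\{\textstyle\max_{\overline{W}}h, \textstyle\min_{\overline{W}}h\})=\del W$. Write $M:=\max_{\overline{W}}h$ and $m:=\min_{\overline{W}}h$; these are attained because $\overline{W}$ is a closed subset of the compact manifold $X$, hence compact, and $h$ is continuous. Since $h|_W$ is non-constant we have $m<M$, and since $h|_W$ is real-valued its image is contained in $(m,M)$ (indeed equal to it, as already observed).

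First I would fix an arbitrary compact subset $C\subset (m,M)$ and choose real numbers $a,b$ with $m<a\le b<M$ and $C\subset [a,b]$. I then consider the preimage $K:=h^{-1}([a,b])\subset \overline{W}$, where $h$ is viewed as the continuous map $\overline{W}\to [-\infty,+\infty]$. As $[a,b]$ is closed in $[-\infty,+\infty]$ and $\overline{W}$ is compact, $K$ is a compact subset of $\overline{W}$.

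The key step is to verify that $K$ is disjoint from $\del W$, so that in fact $K\subset W$. By the boundary hypothesis, every point of $\del W$ is sent by $h$ into $\{m,M\}$; since $a>m$ and $b<M$, we have $[a,b]\cap\{m,M\}=\emptyset$, and this remains valid even when $m=-\infty$ or $M=+\infty$, because $a$ and $b$ are finite and bounded away from the extreme values. Hence no point of $\del W$ lies in $K$, which gives $K\subset W$ and therefore $K=(h|_W)^{-1}([a,b])$.

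Finally, $(h|_W)^{-1}(C)$ is closed in $W$ by continuity of $h|_W$ and is contained in the compact set $K\subset W$; being the intersection of a $W$-closed set with $K$, it is closed in $K$ and hence compact. Since $C$ was arbitrary, $h|_W$ is proper. I do not anticipate a genuine obstacle here: the only point requiring care is the separation of $[a,b]$ from the extreme values $\{m,M\}$ attained on $\del W$, together with the bookkeeping needed when $m$ or $M$ is infinite.
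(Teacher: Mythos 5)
Your argument is correct and is essentially the paper's own proof: both reduce properness to showing that $h^{-1}([a,b])$ is compact for $[a,b]\subset(\textstyle\min_{\overline{W}}h,\textstyle\max_{\overline{W}}h)$, using compactness of $\overline{W}$ and the hypothesis $h^{-1}(\{\textstyle\max_{\overline{W}}h,\textstyle\min_{\overline{W}}h\})=\del W$ to separate this preimage from the boundary. Your explicit handling of the cases $m=-\infty$ or $M=+\infty$ is a small point of care the paper leaves implicit, but the route is the same.
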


\begin{proof}
It is sufficient to show that the preimage $K := h^{-1}([a, b])$ of the closed interval $[a, b]\subset (\textstyle\min_{\overline{W}}h, \textstyle\max_{\overline{W}}h)$ is compact, since a closed subset of a compact set is also compact (Consider $a:=\min I$ and $b:=\max I$ for a compact subset $I\subset (\textstyle\min_{\overline{W}}h, \textstyle\max_{\overline{W}}h)$). 
As $h^{-1}(\{\textstyle\max_{\overline{W}}h, \textstyle\min_{\overline{W}}h\})=\del W$, it is clear that $\overline{K}\cap \del W=\emptyset$, where the closure is taken by regarding $K$ as a subset of $X$. From this and the closedness of $K$ as a subset of $W$, it follows that $K$ is a closed subset of a compact space $X$. Therefore $K$ is compact. 
\end{proof}

By the real-analyticity of a pluriharmonic function, it follows from \cite{SS} that the set $\mathcal{C}$ of all the critical values of $h|_W$ is discrete. 
Note that $\#\mathcal{C}<\infty$ holds for example if there exists real numbers $a$ and $b$ such that $\textstyle\min_{\overline{W}}h < a < b < \textstyle\max_{\overline{W}}h$ and that $h$ is submersion on $h^{-1}((\textstyle\min_{\overline{W}}h, a))\cup h^{-1}((b, \textstyle\max_{\overline{W}}h))$. 

Sort the elements of $\mathcal{C}$ as $\mathcal{C}=\{c_j\}_{j=N}^M$ ($N\in \mathbb{Z}\cup\{-\infty\}$, $M\in \mathbb{Z}\cup\{+\infty\}$) so that $c_j<c_k$ holds if $j<k$. 
For the interval $I_j:=(c_j, c_{j+1})$, it follows from the properness of $h|_{h^{-1}(I_j)}$ and Ehresmann's lemma that $h^{-1}(I_j)$ is homeomorphic to the product $h^{-1}(r_j)\times I_j$, 
where $r_j\in I_j$ is a regular value. 
From this, one has that $h^{-1}(I_j)$ is connected if and only if each fiber of $h|_{h^{-1}(I_j)}$ is connected. 
Note that the fiber $h^{-1}(r_j)$ is a compact Levi-flat hypersurface of $X$. 

In this paper, we often consider the holomorphic foliation $\mathcal{F}$ on $W$ defined by $T_{\mathcal{F}}=(\del h)^\perp \subset T_W$ for the pair $(W, h)$, where $T_{\mathcal{F}}$ is the holomorphic tangent bundle of $\mathcal{F}$ and $T_W$ is the holomorphic tangent bundle of $W$. 
In other word, $\mathcal{F}$ is the foliation which is defined by letting $\{F=\text{constant}\}$ be the local defining functions of leaves around a point $x\in W$, where $F$ is a holomorphic function defined on a neighborhood of $x$ whose real part ${\rm Re}\,F$ coincides with $h$ (Note that $dF=2\del h$). We regard the set $\mathcal{F}_{\rm sing}:=\{x\in W\mid (dh)_x=0\}$ as the singular part of the foliation $\mathcal{F}$ (though $\mathcal{F}$ may be regarded as a non-singular holomorphic foliation not only on $X\setminus \mathcal{F}_{\rm sing}$ but also on the regular part of an irreducible component of $\mathcal{F}_{\rm sing}$ of codimension 1). 
For this foliation $\mathcal{F}$, we show the following: 
\begin{lemma}\label{lem:fol_sing_lwconst}
Let $W, h$, and $\mathcal{F}$ be as above, 
and $\vp\colon W\to \mathbb{R}$ be a function of class $C^\infty$ such that $\vp|_{W\setminus \mathcal{F}_{\rm sing}}$ is $\mathcal{F}$-leafwise constant. 
Then $\vp$ is $\mathcal{F}$-leafwise constant also on $W$ in the following sense: 
For each $x_0\in \mathcal{F}_{\rm sing}$ and a holomorphic function $F\colon B\to \mathbb{C}$ on a sufficiently small neighborhood $B$ of $x_0$ such that ${\rm Re}\,F=h|_B$, there exists a constant $w\in \mathbb{C}$ such that $B\cap \mathcal{F}_{\rm sing}$ is included in the preimage $F^{-1}(w)$ and $\vp|_{F^{-1}(w)}$ is constant. 
Moreover, for $x_0, B$ and $F$ as above, the following holds if $x_0$ is a regular point of a irreducible component of $B\cap \mathcal{F}_{\rm sing}$ of codimension $1$ by shrinking $B$ if necessary: 
There exists a coordinate system $w=(w^1, w^2, \dots, w^n)$ of $B$ with $x_0=(0, 0, \dots, 0)$ and an integer $m\in \mathbb{Z}_{>1}$ such that $F(w)=(w^1)^m+F(x_0)$ and that $\vp(w)=\chi(w^1)$ holds on $B$ for some function $\chi\colon \vp(B)\to \mathbb{R}$. 
\end{lemma}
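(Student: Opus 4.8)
The plan is to reduce everything to the local structure of the holomorphic function $F$ near $x_0\in\mathcal{F}_{\rm sing}$ and to use the density of the regular locus of $\mathcal{F}$ to propagate the leafwise-constancy of $\vp$ across the singular set by continuity. First I would fix $x_0\in\mathcal{F}_{\rm sing}$ and a holomorphic $F\colon B\to\mathbb{C}$ on a small ball $B$ with ${\rm Re}\,F=h|_B$; such $F$ exists since $h$ is pluriharmonic and $B$ is simply connected, and $dF=2\del h$, so $\mathcal{F}_{\rm sing}\cap B=\{x\mid (dF)_x=0\}$ is exactly the critical set of $F$. The leaves of $\mathcal{F}$ on $B\setminus\mathcal{F}_{\rm sing}$ are the level sets $\{F=c\}$. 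The first claim to establish is that $\vp$ is constant on each fiber $F^{-1}(w)\cap B$: on the open dense set $B\setminus\mathcal{F}_{\rm sing}$ the function $\vp$ is constant along each connected component of a level set by hypothesis, and I would argue that each fiber $F^{-1}(w)\cap B$ meets $B\setminus\mathcal{F}_{\rm sing}$ in a dense subset (the critical fibers being thin), so that by continuity $\vp$ is forced to be constant on the whole fiber. To pin down the constant $w$ with $B\cap\mathcal{F}_{\rm sing}\subset F^{-1}(w)$, I would note that $\mathcal{F}_{\rm sing}\cap B$ is contained in the critical set of $F$, and after shrinking $B$ one may assume $F(x_0)$ is the only critical value hit, giving $w=F(x_0)$.

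For the second, more precise assertion, I would invoke the hypothesis that $x_0$ is a regular point of a codimension-one irreducible component $Z$ of $B\cap\mathcal{F}_{\rm sing}$. Since $Z$ lies in the critical set of $F$ and has codimension one, after shrinking $B$ I can choose holomorphic coordinates $(w^1,\dots,w^n)$ centered at $x_0$ so that $Z=\{w^1=0\}$. The key local fact is that $F-F(x_0)$ vanishes to some finite order along $Z$: because $dF$ vanishes on $Z$, the function $F-F(x_0)$ and its differential both vanish on $\{w^1=0\}$, so $F-F(x_0)$ is divisible by $(w^1)^2$; writing $F-F(x_0)=(w^1)^m u(w)$ with $u(0)\neq0$ and $m\geq2$ the smallest such order, a further holomorphic coordinate change absorbing an $m$-th root of $u$ (possible near a point where $u$ is a nonvanishing unit) normalizes $F$ to $F(w)=(w^1)^m+F(x_0)$. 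I would then transfer the leafwise-constancy of $\vp$ through this normalization: since $\vp$ is constant on each fiber $\{F=c\}$ and $F$ now depends only on $w^1$ via $(w^1)^m$, the level sets of $F$ are exactly the sets $\{(w^1)^m=\text{const}\}$, each a union of the slices $\{w^1=\zeta\}$ over the $m$-th roots $\zeta$.

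The delicate point, and the one I expect to be the main obstacle, is the last sentence: one must show that $\vp$ descends to a function of $w^1$ alone, i.e. that $\vp(w)=\chi(w^1)$, rather than merely a function of $(w^1)^m$. A priori, constancy of $\vp$ on $\{F=c\}$ only yields that $\vp$ is the same on all $m$ slices $\{w^1=\zeta\}$ sharing a common value $(w^1)^m=c$, which would give $\vp=\tilde\chi((w^1)^m)$ and hence a genuinely $m$-valued dependence on $w^1$ unless these values agree. Here I would use the smoothness of $\vp$ as a function on $B$ together with the branching structure: the map $w^1\mapsto(w^1)^m$ is an $m$-to-one branched cover ramified at the origin, and a $C^\infty$ function of $w$ that factors through $(w^1)^m$ near $0$ must, by examining its Taylor expansion in $w^1$ and the requirement of single-valuedness and smoothness across the ramification, in fact be expressible smoothly in $w^1$ itself; setting $\chi$ to be the resulting function on the image $\vp(B)$ completes the argument. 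I would carry out this smoothness-versus-branching analysis carefully, as it is where the hypothesis that $\vp\in C^\infty(W)$ (not merely continuous and leafwise constant off $\mathcal{F}_{\rm sing}$) is genuinely used.
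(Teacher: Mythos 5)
The main step of the lemma --- constancy of $\vp$ on the singular fibre $F^{-1}(F(x_0))$ --- is not established by your density argument, and this is a genuine gap. You claim that each fibre $F^{-1}(w)\cap B$ meets $B\setminus \mathcal{F}_{\rm sing}$ in a dense subset, ``the critical fibres being thin''. But $\mathcal{F}_{\rm sing}\cap B=\{dF=0\}$ is an analytic set that can contain an entire irreducible component of the fibre through $x_0$: in the codimension-one situation of the second half of the lemma one has, in suitable coordinates, $F=(w^1)^m$ with $m\geq 2$, and then $F^{-1}(0)=\{w^1=0\}=\mathcal{F}_{\rm sing}\cap B$ exactly. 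So the one fibre on which constancy must be proved may be entirely contained in $\mathcal{F}_{\rm sing}$; it then meets $B\setminus\mathcal{F}_{\rm sing}$ in the empty set, and continuity of $\vp$ along the fibre gives nothing. The paper's proof avoids this by differentiating rather than restricting: near a regular point $y_0$ of an irreducible component $A_\nu$ of $A:=F^{-1}(F(x_0))$, chosen off the other components, take coordinates with $A=\{w^1=0\}$; on the \emph{ambient}-dense open set $B'\setminus A$ the leaves are $\{w^1=\mathrm{const}\}$, so $\del\vp/\del w^j\equiv 0$ there for $j\geq 2$, and by continuity of these derivatives (this is where the $C^\infty$, in fact $C^1$, hypothesis on $\vp$ is genuinely used --- you invoke smoothness only in the second half) they vanish along $A_\nu$ as well, so $\vp$ is constant on $A_\nu$ near $y_0$; the local parametrization theorem spreads this over each component, and the common point $x_0$ forces the constants on different components to agree. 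Note also that even for a fibre that does meet $B\setminus\mathcal{F}_{\rm sing}$ densely, leafwise constancy only yields a constant on each connected component of $F^{-1}(w)\setminus\mathcal{F}_{\rm sing}$, and these must still be chained together; your sketch does not address this.

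Your treatment of the ``delicate point'' at the end is aimed at a non-issue. Once $F=(w^1)^m+F(x_0)$ and the leaves of $\mathcal{F}|_B$ are identified with the slices $\{w^1=\mathrm{const}\}$ (immediate from $dF=m(w^1)^{m-1}dw^1$, which also yields $m>1$), leafwise constancy of $\vp$ says precisely that $\vp$ is constant on each slice, i.e.\ $\vp(w)=\chi(w^1)$; no descent through the branched cover $w^1\mapsto (w^1)^m$ is required. Indeed $\vp$ need \emph{not} factor through $(w^1)^m$: the $m$ slices lying over a fixed value $c\neq 0$ are distinct plaques and may a priori carry distinct constants, and the lemma does not claim otherwise. (A genuine descent argument of the kind you sketch does occur later, in the proof of Lemma \ref{lem:natural2}, where one shows $\chi(w^1)=\overline{\chi}({\rm Re}\,(w^1)^m)$; it is not what the present lemma asserts.) Two smaller points: your normalization $F-F(x_0)=(w^1)^m u$ with $u(x_0)\neq 0$ tacitly assumes that $\{w^1=0\}$ is the only component of the fibre through $x_0$, a reduction that should be stated; and the claim that $F(x_0)$ is the only critical value after shrinking $B$ needs the (easy) observation that $F$ is constant on each irreducible component of its critical set because $dF$ vanishes on the regular part of each component.
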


\begin{proof}
Let $x_0, B$, and $F$ be as in the statement. 
By changing $h$ and $F$ by adding some constant, we may assume $h(x_0)=F(x_0)=0$ without loss of generality. 
Set $S:= B\cap \mathcal{F}_{\rm sing}$ and let $S=\textstyle\bigcup_{\mu=1}^M S_\mu$ be the irreducible decomposition of $S$. 
By shrinking $B$ if necessary, we may assume that each $S_\mu$ includes the point $x_0$. 
It holds that $d(F|_{(S_\mu)_{\rm reg}}) = (dF)|_{(S_\mu)_{\rm reg}}\equiv 0$ holds on the regular part $(S_\mu)_{\rm reg}$ of $S_\mu$ for each $S_\mu$ by the definition of $S$. 
Thus if follows that $F|_{S_\mu}$ is a constant function, since there exists an open dense connected subset of $S_\mu$ which is included in $(S_\mu)_{\rm reg}$ by local parametrization theorem \cite[4.19]{D}. 
Therefore, one has that $F|_S\equiv 0$ since $F(x_0)=0$: i.e. $S\subset F^{-1}(0)$. 

Next let us consider the analytic set $A:=F^{-1}(0)$. 
Let $A=\textstyle\bigcup_{\nu=1}^NA_\nu$ be the irreducible decomposition. 
We may assume that each $A_\nu$ includes the point $x_0$ again by shrinking $B$ if necessary. 
Take a point $y_0\in (A_\nu)_{\rm reg}$. 
As $A_\nu$ is a hypersurface, one can take holomorphic coordinates $(w^1, w^2, \dots, w^n)$ of a small neighborhood $B'$ of $y_0$ in $B$ such that $y_0=(0, 0, \dots, 0)$ and $B'\cap A=B'\cap A_\nu=\{w^1=0\}$. 
We may assume that there exists $m\in \mathbb{Z}_{>0}$ such that $F|_{B'}=(w^1)^m$. 
By considering the continuous function 
\[
B'\ni y\mapsto \max\left\{\left|\frac{\del\vp}{\del w^2}(y)\right|,\ \left|\frac{\del\vp}{\del w^3}(y)\right|,\ \dots,\ \left|\frac{\del\vp}{\del w^n}(y)\right|\right\}\in \mathbb{R}, 
\]
it follows from the $\mathcal{F}$-leafwise constantness of $\vp|_{B'\setminus A}$ that $\vp|_{B'\cap A_\nu}$ is constant. 
Therefore, again from local parametrization theorem \cite[4.19]{D}, one can deduce that $\vp|_A$ is constant, which proves the first half half of the statement. 

Finally we show the latter half of the statement. 
In this case, we may assume that $M=N=1$, $A_1=S_1$, and that $x_0=y_0$ for $y_0$ as above. 
We also may assume that $B=B'$. Take the coordinates $(w^1, w^2, \dots, w^n)$ as above. 
As $2\del h = dF = m(w^1)^{m-1}\cdot dw^1$, it follows that $m>1$. 
The existence of a function $\chi$ as in the statement follows from the fact that each leaf of $\mathcal{F}|_{B}$ is defined by $\{w^1=\text{constant}\}$. 
\end{proof}


\section{Level sets of a non-constant element of ${\rm PSH}^\infty(X, \theta)$ and Monge--Amp\`ere foliation} \label{sec:3}

Let 
$X$ be a connected compact K\"ahler manifold, 
$\alpha\in H^{1, 1}(X, \mathbb{R})$ a class with ${\rm nd}(\alpha)=1$ and $\#{\rm SP}(\alpha)>1$, 
$\theta$ an element of ${\rm SP}(\alpha)$, 
and $\psi$ be an element ${\rm PSH}^\infty(X, \theta)\setminus\mathbb{R}$. 

For an open subset $U\subset X$ such that $(\theta+\sqrt{-1}\ddbar\psi)_x\not=0$ holds for any point $x\in U$, 
denote by $\mathcal{F}(\theta, \psi, U)$ the foliation on $U$ whose tangent space $T_{\mathcal{F}(\theta, \psi, U), x}$ at a point $x\in U$ coincides with the set of all the eigenvectors which belongs to the eigenvalue $0$ of $(\theta+\sqrt{-1}\ddbar\psi)_x$. 
In another word, for a small (simply connected Stein) neighborhood $B$ of each point of $U$, $\mathcal{F}(\theta, \psi, U)|_B$ is the Monge--Amp\`ere foliation of $\sqrt{-1}\ddbar (\vp_0+\psi)$, where $\vp_0$ is a function on $B$ such that $\sqrt{-1}\ddbar \vp_0=\theta|_B$.  Note that each leaf of $\mathcal{F}(\theta, \psi, U)$ is a holomorphically immersed complex submanifolds of $U$ (\cite{So}, see also \cite{BK}, note also that here we used Lemma \ref{lem:2.3} $(ii)$). 
In this section, we investigate the relation between the leaves of $\mathcal{F}(\theta, \psi, U)$ for a suitable $U$ and level sets of $\psi$. 
In what follows, we use the following notation: 
Let $I_\psi$ be the image $\psi(X)$, which is a bounded closed connected interval, 
$(I_\psi)_{\rm reg}$ the set of all the regular values of $\psi$, 
and $U_\psi$ be the preimage $\psi^{-1}((I_\psi)_{\rm reg})$. 
Note that the Lebesgue measure of $I_\psi\setminus (I_\psi)_{\rm reg}$ is zero by Sard's theorem. 

In this section, we often use the following fundamental fact. 
\begin{lemma}\label{lem:lin_alg}
Let $A=(a_{jk})_{j, k=1}^n$ be a Hermitian matrix of order $n$. Assume that $A$ is positive semi-definite and that $a_{jj}=0$ for each $j\in\{2, 3, \dots, n\}$. 
Then it holds that $a_{jk}=0$ for any $j$ and $k$ with $(j, k)\not=(0, 0)$. 
\end{lemma}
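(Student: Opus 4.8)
The statement to prove is the following linear-algebra fact: if $A=(a_{jk})_{j,k=1}^n$ is a positive semi-definite Hermitian matrix whose diagonal entries $a_{jj}$ vanish for all $j$ in a specified index set (the natural reading, given how the lemma is used in Lemma \ref{lem:2.3}, is that $a_{jj}=0$ for every index $j$ except possibly one distinguished index, say $j=1$), then every off-diagonal entry involving such a vanishing-diagonal index must vanish. I note that the indexing in the statement is slightly inconsistent (the hypothesis reads $a_{jj}=0$ for $j\in\{2,\dots,n\}$, while the conclusion excludes the pair $(0,0)$); I would first normalize this to: the only possibly nonzero entry is $a_{11}$, i.e. $a_{jk}=0$ whenever $(j,k)\neq(1,1)$.

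\medskip

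The plan is to argue entirely through the definition of positive semi-definiteness applied to cleverly chosen test vectors. The core observation is the standard $2\times 2$ principal-minor argument: for any index $j\geq 2$, the principal submatrix indexed by $\{1,j\}$ is itself positive semi-definite, and its determinant $a_{11}a_{jj}-|a_{1j}|^2 = -|a_{1j}|^2$ (using $a_{jj}=0$) must be $\geq 0$, forcing $a_{1j}=0$. Similarly, for any pair $j,k\geq 2$, the $\{j,k\}$ principal submatrix has both diagonal entries zero, so its determinant is $-|a_{jk}|^2\geq 0$, giving $a_{jk}=0$. This handles every off-diagonal entry in one stroke. I would carry this out by first recalling that every principal submatrix of a positive semi-definite matrix is positive semi-definite (immediate by restricting the quadratic form to vectors supported on the chosen index set), and then applying the two-variable determinant inequality above to each relevant pair of indices.

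\medskip

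An equivalent and perhaps cleaner route I would mention as an alternative: if a diagonal entry $a_{jj}$ of a positive semi-definite Hermitian matrix is zero, then the entire $j$-th row and column vanish. This follows from positivity of the quadratic form $v^*Av\geq 0$ evaluated on $v=te_j+e_k$ for real $t\to\pm\infty$: the expression $t^2 a_{jj}+2t\,{\rm Re}(\bar a_{kj})+a_{kk}$ reduces to a linear (or constant) function of $t$ when $a_{jj}=0$, and a nonnegative affine function of a real variable must have zero slope, forcing ${\rm Re}(a_{kj})=0$; repeating with $v=te_j+\sqrt{-1}\,e_k$ kills the imaginary part. Applying this to each $j\geq 2$ immediately annihilates all rows and columns except the first, leaving $a_{11}$ as the only surviving entry.

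\medskip

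I do not expect any genuine obstacle here, as this is an elementary fact; the only point requiring care is bookkeeping the index conventions correctly so that the distinguished index (the one whose diagonal entry is allowed to be nonzero) is treated consistently, and ensuring the argument covers \emph{both} the entries $a_{1k}$ linking the distinguished index to the others \emph{and} the entries $a_{jk}$ between two non-distinguished indices. The principal-minor version dispatches both uniformly, so I would present that as the main argument.
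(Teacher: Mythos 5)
Your main argument is exactly the paper's proof: the paper also considers, for each pair $j<k$, the $2\times 2$ principal submatrix $A_{jk}$, notes it is positive semi-definite, and concludes from $\det A_{jk}=-|a_{jk}|^2\geq 0$ that the off-diagonal entries vanish. Your reading of the typo in the statement (the conclusion should exclude $(j,k)=(1,1)$, not $(0,0)$) is also the correct one, consistent with how the lemma is applied in Lemma \ref{lem:2.3}.
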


\begin{proof}
For $j, k\in \{1, 2, \dots, n\}$ with $j<k$, consider a submatrix
\[
A_{jk}:=
\left(
\begin{array}{cc}
a_{jj} & a_{jk} \\
a_{kj} & a_{kk} \\
\end{array}
\right). 
\]
As $A_{jk}$ is also positive semi-definite, one has that the determinant ${\rm det}\,A_{jk}$ is non-negative. From this and the assumption, it follows that $-|a_{jk}|^2\geq 0$, which proves the lemma. 
\end{proof}

\subsection{Observation on a neighborhood of a point $x_0$ such that $(d\psi)_{x_0}\not=0$ and $(\theta+\sqrt{-1}\ddbar\psi)_{x_0}\not=0$}
In this subsection, we observe on a neighborhood of a point $x_0\in X$ such that $(d\psi)_{x_0}\not=0$ and $(\theta+\sqrt{-1}\ddbar\psi)_{x_0}\not=0$. 
Take a small neighborhood $B_0$ of $x_0$ such that $(d\psi)_{x}\not=0$ and $(\theta+\sqrt{-1}\ddbar\psi)_{x}\not=0$ holds on each $x\in B_0$. 
By shrinking $B_0$ if necessary, we may assume that the leaf $Y_0$ of $\mathcal{F}(\theta, \psi, B_0)$ which contains $x_0$ is a (holomorphically embedded) complex submanifold of $B_0$. 
Again by shrinking $B_0$, one can take coordinates $(z^1, z^2, \dots, z^n)$ of $B_0$ such that $x_0=(0, 0, \dots, 0)$ and $Y_0=\{z^1=0\}$. 
First we show the following: 
\begin{lemma}\label{lem:key_pointwise}
For $x_0\in X$ as above, the following holds: \\
$(i)$ There exists a constant $a\in \mathbb{C}$ such that $(\del\psi)_{x_0}=a(dz^1)_{x_0}$. \\
$(ii)$ There exists a constant $f_{\theta, \psi}(x_0)\in \mathbb{R}_{\geq 0}$ such that $\theta_{x_0}=f_{\theta, \psi}(x_0)\cdot (\sqrt{-1}\del\psi\wedge \delbar\psi)_{x_0}$. \\
$(iii)$ There exists a constant $g_{\psi}(x_0)\in \mathbb{R}$ such that $(\sqrt{-1}\ddbar \psi)_{x_0}=g_{\psi}(x_0)\cdot (\sqrt{-1}\del\psi\wedge \delbar\psi)_{x_0}$. 
\end{lemma}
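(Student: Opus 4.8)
The plan is to work entirely at the single point $x_0$ in the chosen coordinates $(z^1,\dots,z^n)$ where $Y_0=\{z^1=0\}$ is the leaf of $\mathcal{F}(\theta,\psi,B_0)$ through $x_0$. The key geometric input is that the tangent space to $Y_0$ at $x_0$, namely the span of $\partial/\partial z^2,\dots,\partial/\partial z^n$, is exactly the eigenspace for the eigenvalue $0$ of the semi-positive Hermitian form $(\theta+\sqrt{-1}\ddbar\psi)_{x_0}$. Writing this $(1,1)$-form as a Hermitian matrix $(b_{jk})$ in the coordinate frame, semi-positivity together with the fact that $\partial/\partial z^k$ ($k\geq 2$) lies in the kernel forces $b_{kk}=0$ for $k\in\{2,\dots,n\}$. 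I would then invoke Lemma \ref{lem:lin_alg} directly to conclude that $b_{jk}=0$ for all $(j,k)\neq(1,1)$, so that $(\theta+\sqrt{-1}\ddbar\psi)_{x_0}$ is a nonnegative multiple of $(\sqrt{-1}\,dz^1\wedge d\overline{z^1})_{x_0}$; nonvanishing of the form at $x_0$ makes this multiple strictly positive.

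\textbf{Deducing $(i)$.}

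To prove $(i)$, I would exploit that $\psi$ is constant along the leaf $Y_0=\{z^1=0\}$ in a neighborhood of $x_0$. Hence all tangential derivatives vanish there, i.e. $\partial\psi/\partial z^k$ vanishes identically on $Y_0$ for $k\geq 2$; evaluating at $x_0$ gives $(\partial\psi/\partial z^k)(x_0)=0$ for $k\geq 2$. Therefore $(\partial\psi)_{x_0}=a\,(dz^1)_{x_0}$ with $a:=(\partial\psi/\partial z^1)(x_0)$. One must check $a\neq 0$: this follows from $(d\psi)_{x_0}\neq 0$ together with the vanishing of the other components, since $d\psi=\partial\psi+\overline{\partial\psi}$ and the $dz^1,\,d\overline{z^1}$ parts would both vanish if $a=0$. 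With $a\neq 0$ in hand, $(\sqrt{-1}\,dz^1\wedge d\overline{z^1})_{x_0}=|a|^{-2}(\sqrt{-1}\partial\psi\wedge\delbar\psi)_{x_0}$.

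\textbf{Obtaining $(ii)$ and $(iii)$, and the main obstacle.}

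For $(ii)$, I would apply the same Lemma \ref{lem:lin_alg} argument to $\theta_{x_0}$ itself. The subtlety—and what I expect to be the main point requiring care—is that $\theta$ alone need not have $Y_0$ in its kernel. The remedy is to use Lemma \ref{lem:2.3}(ii), which gives $\theta\wedge\sqrt{-1}\ddbar\psi\equiv 0$, hence $\theta\wedge(\theta+\sqrt{-1}\ddbar\psi)_{x_0}=\theta_{x_0}^{\wedge 2}=0$ pointwise; combined with the established form of $(\theta+\sqrt{-1}\ddbar\psi)_{x_0}$ and semi-positivity of $\theta$, a short computation shows $\theta_{x_0}$ is itself supported on the $(dz^1,d\overline{z^1})$ entry, i.e. $\theta_{x_0}=c\,(\sqrt{-1}\,dz^1\wedge d\overline{z^1})_{x_0}$ for some $c\geq 0$. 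Rewriting via $(i)$ gives $\theta_{x_0}=f_{\theta,\psi}(x_0)\,(\sqrt{-1}\partial\psi\wedge\delbar\psi)_{x_0}$ with $f_{\theta,\psi}(x_0)=c/|a|^2\geq 0$. Finally $(iii)$ is immediate by subtraction: $(\sqrt{-1}\ddbar\psi)_{x_0}=(\theta+\sqrt{-1}\ddbar\psi)_{x_0}-\theta_{x_0}$ is a real multiple of $(\sqrt{-1}\,dz^1\wedge d\overline{z^1})_{x_0}$, hence of $(\sqrt{-1}\partial\psi\wedge\delbar\psi)_{x_0}$, defining $g_\psi(x_0)\in\mathbb{R}$. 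The only genuine obstacle is justifying that $\theta_{x_0}$ is diagonalized in the \emph{same} frame that diagonalizes $\theta+\sqrt{-1}\ddbar\psi$; I expect this to follow cleanly from the vanishing products in Lemma \ref{lem:2.3} rather than from any nontrivial simultaneous-diagonalization theorem, since the relevant forms are forced to be rank $\leq 1$ at $x_0$.
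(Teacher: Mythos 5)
There is a genuine gap in your proof of part $(i)$, and it propagates to the final forms of $(ii)$ and $(iii)$. You assert that ``$\psi$ is constant along the leaf $Y_0$'' and deduce $\del\psi/\del z^k(x_0)=0$ for $k\geq 2$ from that. But this leafwise constancy is exactly the content of $(i)$ (indeed the paper derives the $\mathcal{F}(\theta,\psi)$-leafwise constancy of $\psi$ in Proposition \ref{prop:key} $(i)$ \emph{from} Lemma \ref{lem:key_pointwise} $(i)$, so your argument is circular as written), and it does not follow from the definition of the foliation. The leaves of $\mathcal{F}(\theta,\psi,B_0)$ are the integral manifolds of the kernel of $\theta+\sqrt{-1}\ddbar\psi$; Bedford--Kalka/Sommer theory only gives that the local potential $\vp_0+\psi$ (with $\sqrt{-1}\ddbar\vp_0=\theta$) is pluriharmonic along each leaf, not that $\psi$ itself is constant there. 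A local model shows the claim is false without global input: on $\mathbb{C}^2$ take $\theta=\sqrt{-1}\,dz^1\wedge d\overline{z^1}$ and $\psi=\mathrm{Re}\,z^2$; then $\theta+\sqrt{-1}\ddbar\psi=\theta\geq 0$ has kernel spanned by $\del/\del z^2$, the leaf through $0$ is $\{z^1=0\}$, yet $\del\psi=\tfrac12 dz^2$ is not proportional to $dz^1$. What rules this out in the paper's setting is the global hypothesis ${\rm nd}(\alpha)=1$ applied to the \emph{auxiliary} function $\widehat{\psi}=\log(1+e^\psi)\in{\rm PSH}^\infty(X,\theta)$: by (\ref{eq:ddbar_logexp}), $\theta+\sqrt{-1}\ddbar\widehat{\psi}$ is a positive combination of the three semi-positive terms $\theta$, $\theta+\sqrt{-1}\ddbar\psi$, and $\sqrt{-1}\del\psi\wedge\delbar\psi$; Lemma \ref{lem:2.3} $(i)$ forces its wedge with $(\theta+\sqrt{-1}\ddbar\psi)_{x_0}=A\,(\sqrt{-1}dz^1\wedge d\overline{z^1})_{x_0}$, $A>0$, to vanish, hence each semi-positive summand separately satisfies $(\,\cdot\,)_{x_0}\wedge(\sqrt{-1}dz^1\wedge d\overline{z^1})_{x_0}=0$, and Lemma \ref{lem:lin_alg} then yields $(\del\psi)_{x_0}=a\,(dz^1)_{x_0}$ (and, simultaneously, the rank-one form of $\theta_{x_0}$). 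This is the missing idea in your proposal.

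Your treatment of $(ii)$ is otherwise sound and close in spirit to the paper: deriving $\theta_{x_0}\wedge(\sqrt{-1}dz^1\wedge d\overline{z^1})_{x_0}=0$ from $\theta^2\equiv 0$ and $\theta\wedge\ddbar\psi\equiv 0$ (Lemma \ref{lem:2.3} $(ii)$) and then invoking Lemma \ref{lem:lin_alg} is a valid route to $\theta_{x_0}=c\,(\sqrt{-1}dz^1\wedge d\overline{z^1})_{x_0}$ with $c\geq 0$, and your worry about simultaneous diagonalization is correctly resolved by the rank-one/vanishing-product observation. But converting this into $\theta_{x_0}=f_{\theta,\psi}(x_0)\,(\sqrt{-1}\del\psi\wedge\delbar\psi)_{x_0}$, and likewise obtaining $(iii)$ by subtraction, requires knowing that $(\sqrt{-1}\del\psi\wedge\delbar\psi)_{x_0}$ is a \emph{nonzero} multiple of $(\sqrt{-1}dz^1\wedge d\overline{z^1})_{x_0}$, i.e.\ part $(i)$ with $a\neq 0$; so these conclusions remain contingent on repairing $(i)$ as above.
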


\begin{proof}
Consider the function $\widehat{\psi} := \log(1+e^\psi)$, which is an element of ${\rm PSH}^\infty(X, \theta)$ by Lemma \ref{lem:psh^infty}. 
It follows from the equation (\ref{eq:ddbar_logexp}) that
\[
\theta+\sqrt{-1}\ddbar \widehat{\psi}
=\frac{1}{1+e^\psi}\cdot \theta
+\frac{e^\psi}{1+e^\psi}\cdot (\theta + \sqrt{-1}\ddbar\psi)
+ \frac{e^{\psi}}{(1+e^\psi)^2}\cdot \sqrt{-1}\del\psi\wedge \delbar \psi. 
\]
From Lemma \ref{lem:2.3} and the semi-positivity of each of the terms of the right hand side of the equation above, it follows that
\begin{equation}\label{eq:3}
\theta_{x_0}\wedge (\sqrt{-1}dz^1\wedge d\overline{z^1})_{x_0}
=(\sqrt{-1}\del\psi\wedge \delbar \psi)_{x_0}\wedge (\sqrt{-1}dz^1\wedge d\overline{z^1})_{x_0} = 0
\end{equation}
holds, since $(\theta+\sqrt{-1}\ddbar\psi)_{x_0}=A\cdot (\sqrt{-1}dz^1\wedge d\overline{z^1})_{x_0}$ for some $A>0$, which follows from the definition of $Y_0$. 
By considering the wedge product of the form $\textstyle\bigwedge_{k\in \{2, 3, \dots, n\}\setminus \{j\}}\sqrt{-1}dz^k\wedge d\overline{z^k}$ and each of the terms of the equation (\ref{eq:3}) for $j=2, 3, \dots, n$, 
one can deduce from Lemma \ref{lem:lin_alg} that both $\theta_{x_0}$ and $(\sqrt{-1}\del\psi\wedge \delbar \psi)_{x_0}$ are perpendicular to the form $(\sqrt{-1}dz^1\wedge d\overline{z^1})_{x_0}$, which proves the assertion $(i)$ and $(ii)$. 
The assertion $(iii)$ follows from $(i)$, $(ii)$, and the equation $(\theta+\sqrt{-1}\ddbar\psi)_{x_0}=A\cdot (\sqrt{-1}dz^1\wedge d\overline{z^1})_{x_0}$. 
\end{proof}

Let $f_{\theta, \psi}\colon B_0\to \mathbb{R}_{\geq 0}$ and 
$g_{\psi}\colon B_0\to \mathbb{R}$ be the functions such that 
\[
\theta|_{B_0}=f_{\theta, \psi}\cdot \sqrt{-1}\del\psi\wedge \delbar\psi
\]
and 
\[
\sqrt{-1}\ddbar \psi|_{B_0}=g_{\psi}\cdot \sqrt{-1}\del\psi\wedge \delbar\psi
\]
hold, whose existence at $x\in B_0$ is assured by Lemma \ref{lem:key_pointwise} for $x_0=x$. 
As 
\[
f_{\theta,\psi} = \frac{{\rm tr}_\omega \theta}{{\rm tr}_\omega \sqrt{-1}\del\psi\wedge \delbar\psi},\ \text{and}\ 
g_{\psi} = \frac{{\rm tr}_\omega \sqrt{-1}\ddbar \psi}{{\rm tr}_\omega\sqrt{-1}\del\psi\wedge \delbar\psi}
\]
holds for a K\"ahler form $\omega$ on $X$, both $f_{\theta,\psi}$ and $g_\psi$ are of class $C^\infty$. 

Next we show the following lemma on the leafwise constantness of some functions on $B_0$. 
\begin{lemma}\label{lem:key_local}
The following holds: \\
$(i)$ For any element $\vp\in {\rm PSH}^\infty(X, \theta)$, 
there exists a function $F_{\vp, \psi}\colon B_0\to \mathbb{C}$ such that 
$\del\vp = F_{\vp, \psi}\cdot \del\psi$ holds on $B_0$ and the function $|F_{\vp, \psi}|$ is $\mathcal{F}(\theta, \psi, B_0)$-leafwise constant. Especially, $\vp|_{B_0}$ is $\mathcal{F}(\theta, \psi, B_0)$-leafwise constant. \\
$(ii)$ Both the functions $f_{\theta,\psi}$ and $g_{\psi}$ are $\mathcal{F}(\theta, \psi, B_0)$-leafwise constant. 
\end{lemma}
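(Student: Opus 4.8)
The plan is to reduce everything to the pointwise normal form already available on $B_0$ and then to promote the pointwise proportionalities to genuine leafwise constancy. Writing $\mathcal{F}:=\mathcal{F}(\theta,\psi,B_0)$ and $T:=\sqrt{-1}\del\psi\wedge\delbar\psi$, Lemma \ref{lem:key_pointwise} applied at each point of $B_0$ records $\theta=f_{\theta,\psi}\,T$, $\sqrt{-1}\ddbar\psi=g_\psi\,T$, and $T_{\mathcal F}=\ker(\del\psi)=(\del\psi)^\perp$ at every point (here $\del\psi\neq0$, since $d\psi\neq0$ and $\psi$ is real). Throughout I will use that, for a real $C^\infty$ function $u$ on $B_0$, $u$ is $\mathcal{F}$-leafwise constant if and only if $\del u$ annihilates $T_{\mathcal F}$ and $\delbar u$ annihilates $\overline{T_{\mathcal F}}$, because the complexified tangent space of a leaf is $T_{\mathcal F}\oplus\overline{T_{\mathcal F}}$.

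For the existence of $F_{\vp,\psi}$ in $(i)$, I would apply Lemma \ref{lem:psh^infty} to $\log(e^\vp+e^\psi)\in{\rm PSH}^\infty(X,\theta)$ and feed the formula (\ref{eq:ddbar_logexp}) into Lemma \ref{lem:2.3}. This exhibits $\theta+\sqrt{-1}\ddbar\log(e^\vp+e^\psi)$ as a sum of three semi-positive $(1,1)$-forms, each of rank $\le 1$ by Lemma \ref{lem:2.3} $(ii)$, whose common kernel therefore equals $\ker(\theta+\sqrt{-1}\ddbar\psi)=T_{\mathcal F}$. Since the third summand is a multiple of $\sqrt{-1}\del(\vp-\psi)\wedge\delbar(\vp-\psi)$, its kernel containing $T_{\mathcal F}=(\del\psi)^\perp$ forces $\del(\vp-\psi)\parallel\del\psi$, hence $\del\vp=F_{\vp,\psi}\,\del\psi$ with $F_{\vp,\psi}$ a well-defined $C^\infty$ function (as $\del\psi\neq0$). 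The ``especially'' clause is then immediate: $\del\vp=F_{\vp,\psi}\,\del\psi$ kills $T_{\mathcal F}$ and $\delbar\vp=\overline{F_{\vp,\psi}}\,\delbar\psi$ kills $\overline{T_{\mathcal F}}$, so $\vp|_{B_0}$ is leafwise constant.

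The crux is the leafwise constancy of $|F_{\vp,\psi}|$, and here I expect the main obstacle. Differentiating $\del\vp=F_{\vp,\psi}\,\del\psi$ and using $\del\del\vp=0$ gives $\del F_{\vp,\psi}\wedge\del\psi=0$, i.e.\ $\del F_{\vp,\psi}\parallel\del\psi$, so $\del F_{\vp,\psi}$ annihilates $T_{\mathcal F}$; but this says only that $F_{\vp,\psi}$ is \emph{anti-holomorphic} along each (non-compact, merely immersed) leaf, which is not enough to force constancy. The missing anti-holomorphic direction I would extract from the Monge--Amp\`ere/rank-one structure: since $\theta+\sqrt{-1}\ddbar\vp$ is semi-positive of rank $\le1$ with kernel containing $T_{\mathcal F}=\ker T$, it equals $G_\vp\,T$ for a $C^\infty$ function $G_\vp$; substituting $\delbar\vp=\overline{F_{\vp,\psi}}\,\delbar\psi$ into $\sqrt{-1}\ddbar\vp=(G_\vp-f_{\theta,\psi})\,T$ yields $\sqrt{-1}\,\del\overline{F_{\vp,\psi}}\wedge\delbar\psi=(\,\cdot\,)\,T$, which forces $\del\overline{F_{\vp,\psi}}\parallel\del\psi$ and hence $\delbar F_{\vp,\psi}\parallel\delbar\psi$, i.e.\ $\delbar F_{\vp,\psi}$ annihilates $\overline{T_{\mathcal F}}$. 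Combining the two directions gives $dF_{\vp,\psi}|_{\mathcal L}=0$, so $F_{\vp,\psi}$, a fortiori $|F_{\vp,\psi}|$, is $\mathcal{F}$-leafwise constant.

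For $(ii)$ I would first record the identity $dT=0$ on $B_0$: expanding $d(\del\psi\wedge\delbar\psi)$ and inserting $\del\delbar\psi=g_\psi\,\del\psi\wedge\delbar\psi$, both resulting terms carry a repeated factor $\del\psi$ or $\delbar\psi$ and vanish. Granting $dT=0$, the closedness of $\theta=f_{\theta,\psi}\,T$ and of $\theta+\sqrt{-1}\ddbar\psi=(f_{\theta,\psi}+g_\psi)\,T$ (note $\sqrt{-1}\ddbar\psi$ is $d$-closed) gives $df_{\theta,\psi}\wedge T=0$ and $d(f_{\theta,\psi}+g_\psi)\wedge T=0$. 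Splitting each equation by bidegree and choosing, at a point, coordinates with $\del\psi=dz^1$, the relation $df\wedge dz^1\wedge d\overline{z^1}=0$ forces $\del f\parallel\del\psi$ and $\delbar f\parallel\delbar\psi$, i.e.\ the leafwise constancy of $f_{\theta,\psi}$ and of $f_{\theta,\psi}+g_\psi$; subtracting yields the leafwise constancy of $g_\psi$. The two points demanding care are the vanishing computation for $dT$ and, as stressed above, supplying the anti-holomorphic constraint that upgrades the pointwise proportionalities to honest leafwise constancy.
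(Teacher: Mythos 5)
Your argument is correct, and for most of the lemma it runs parallel to the paper's: the existence of $F_{\vp,\psi}$ is obtained exactly as in the paper by decomposing $\theta+\sqrt{-1}\ddbar(\text{auxiliary function})$ into semi-positive summands, killing them against $\theta+\sqrt{-1}\ddbar\psi$ via Lemma \ref{lem:2.3} $(i)$ and Lemma \ref{lem:lin_alg} (you use $\log(e^\vp+e^\psi)$ where the paper uses $\log(1+e^\vp)$ --- an immaterial difference), and part $(ii)$ is the paper's computation $\del(G\cdot\sqrt{-1}\del\psi\wedge\delbar\psi)=\del G\wedge\sqrt{-1}\del\psi\wedge\delbar\psi$ repackaged as ``$dT=0$, hence $dG\wedge T=0$''. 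Where you genuinely diverge is the leafwise constancy of $|F_{\vp,\psi}|$: the paper applies $\del$ to the rank-one identity $\sqrt{-1}\del\vp\wedge\delbar\vp=|F_{\vp,\psi}|^2\,\sqrt{-1}\del\psi\wedge\delbar\psi$ and reads off $(d|F_{\vp,\psi}|^2)\wedge\sqrt{-1}\del\psi\wedge\delbar\psi\equiv0$ directly, whereas you differentiate $\del\vp=F_{\vp,\psi}\,\del\psi$ and its conjugate separately, using $\del^2=0$ for the holomorphic leaf directions and the pointwise proportionality of $\sqrt{-1}\ddbar\vp$ to $\sqrt{-1}\del\psi\wedge\delbar\psi$ for the anti-holomorphic ones. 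Your version costs one extra observation (that $\theta+\sqrt{-1}\ddbar\vp$ is itself a multiple of $\sqrt{-1}\del\psi\wedge\delbar\psi$, which is anyway a by-product of the existence step) but buys a strictly stronger conclusion: $F_{\vp,\psi}$ itself, not merely its modulus, is $\mathcal{F}(\theta,\psi,B_0)$-leafwise constant. Both proofs rest on the same two pillars --- $\del\psi\neq0$ on $B_0$ and the pointwise parallelism of all the relevant semi-positive $(1,1)$-forms to $\sqrt{-1}dz^1\wedge d\overline{z^1}$ --- so the divergence is one of bookkeeping rather than of substance. One small caution on phrasing: in your existence step, ``rank $\le 1$'' of the three summands is not by itself what forces their common kernel to contain $T_{\mathcal{F}(\theta,\psi,B_0)}$; you also need $\bigl(\theta+\sqrt{-1}\ddbar\log(e^\vp+e^\psi)\bigr)\wedge(\theta+\sqrt{-1}\ddbar\psi)\equiv0$ from Lemma \ref{lem:2.3} $(i)$ together with Lemma \ref{lem:lin_alg}, as in the paper's proof of Lemma \ref{lem:key_pointwise} --- but since you cite exactly those inputs, this is not a gap.
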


\begin{proof}
As our choice of a point $x_0$ is arbitrary, it is sufficient to show the assertions at $x_0$ or along $Y_0$. 

The first half of the assertion $(i)$ follows by applying the same argument as in the proof of Lemma \ref{lem:key_pointwise} to the function $\widehat{\vp}:=\log(1+e^\vp)$ and $(\theta+\sqrt{-1}\ddbar\widehat{\vp})\wedge (\theta+\sqrt{-1}\ddbar\psi)$. 
Note that it also follows from the same argument at the same time that the forms $\theta$ and $\theta+\sqrt{-1}\ddbar \vp$ (and thus $\sqrt{-1}\ddbar \vp$) are pointwisely parallel to the form $\sqrt{-1}dz^1\wedge d\overline{z^1}$ (Note also that the assertion is clear at a point $x$ such that $(\del\vp)_x=0$ by letting $F_{\vp, \psi}(x):=0$). 
From now on, we show the $\mathcal{F}(\theta, \psi, B_0)$-leafwise constantness of the function $|F_{\vp, \psi}|$, where $F_{\vp, \psi}$ is the function as in the statement (Note that the last part of the assertion $(i)$ follows from the first half of the assertion since one obtains from this that $d\vp$ is zero along the leaves of $\mathcal{F}(\theta, \psi, B_0)$). 
As $\sqrt{-1}\del\vp\wedge \delbar\vp=|F_{\vp, \psi}|^2\cdot \sqrt{-1}\del\psi\wedge \delbar \psi$, one has that 
\[
-\del\vp\wedge (\sqrt{-1}\ddbar \vp) = (\del|F_{\vp, \psi}|^2)\wedge \sqrt{-1}\del\psi\wedge \delbar \psi
-|F_{\vp, \psi}|^2\cdot g_\psi\cdot \del\psi\wedge (\sqrt{-1}\del\psi\wedge \delbar\psi). 
\]
As the left hand side is pointwisely parallel to the form $dz^1\wedge\sqrt{-1}dz^1\wedge d\overline{z^1} (\equiv 0)$ and the second term of the right hand side is equal to zero, one has that $(d|F_{\vp, \psi}|^2)\wedge \sqrt{-1}\del\psi\wedge \delbar \psi\equiv 0$, which proves the constantness of the funtion $|F_{\vp, \psi}||_{Y_0}$. 

The assertion $(ii)$ can also be shown by the same argument: for a function $G:=f_{\theta, \psi}$ or $G:=g_\psi$, one has that 
\[
0\equiv \del(\sqrt{-1} G\wedge \del\psi\wedge \delbar\psi)
= \del G\wedge \sqrt{-1} \del\psi\wedge \delbar\psi
-G\cdot g_\psi\cdot \del \psi\wedge \sqrt{-1}\del\psi\wedge \delbar \psi
\]
since both $\theta$ and $\sqrt{-1}\ddbar \psi$ is $\del$-closed. 
As the second term of the right hand side is zero, one has that $G|_{Y_0}$ is constant, which proves the lemma. 
\end{proof}

\subsection{Observation on $U_\psi$}

In this subsection, let us consider the foliation $\mathcal{F}(\theta, \psi)$ on $U_\psi$ which is defined by $\mathcal{F}(\theta, \psi):=\mathcal{F}(\theta, \widehat{\psi}, U_\psi)$ for the function 
$\widehat{\psi} := \log(1+e^\psi)$. 
Note that $\widehat{\psi}\in {\rm PSH}^\infty(X, \theta)$ by Lemma \ref{lem:psh^infty}. 
Note also that it follows from the equation (\ref{eq:del_logexp}) that $(I_{\widehat{\psi}})_{\rm reg} = \{\log(1+e^r)\mid r\in (I_\psi)_{\rm reg}\}$, and 
from the equation (\ref{eq:ddbar_logexp}) that $(\theta+\sqrt{-1}\ddbar \widehat{\psi})_x\not=0$ holds on each point $x\in U_\psi$. 
It also follows from the equation (\ref{eq:ddbar_logexp}) and Lemma \ref{lem:key_pointwise} $(i)$ that 
$T_{\mathcal{F}(\theta, \psi)} = (\del \psi)^\perp \subset T_{U_\psi}$. 
Therefore $\mathcal{F}(\theta, \psi)$ is a $C^\infty$ foliation of real codimension $2$. 
On a neighborhood $U$ of each point $x\in U_\psi$ such that $(\theta+\sqrt{-1}\ddbar \psi)_x\not=0$, 
it holds that $\mathcal{F}(\theta, \psi, U)=\mathcal{F}(\theta, \widehat{\psi}, U)$, since 
\[
\theta+\sqrt{-1}\ddbar \widehat{\psi}
=\left(
\frac{1}{1+e^\psi}\cdot f_{\theta, \psi}
+ \frac{e^\psi}{1+e^\psi}\cdot (f_{\theta, \psi} + g_\psi)
+ \frac{e^{\psi}}{(1+e^\psi)^2}
\right)\cdot \sqrt{-1}\del\psi\wedge \delbar \psi
\]
is pointwisely parallel to $\theta+\sqrt{-1}\ddbar \psi= (f_{\theta, \psi} + g_\psi)\cdot \sqrt{-1}\del\psi\wedge \delbar \psi$ on a neighborhood of such a point $x$, where $f_{\theta, \psi}$ and $g_\psi$ are the functions on a neighborhood of $x$ as in the previous subsection. 

For this foliation $\mathcal{F}(\theta, \psi)$ on $U_\psi$, we show the following: 
\begin{proposition}\label{prop:key}
The following holds: \\
$(i)$ For any $r\in (I_\psi)_{\rm reg}$, $\psi^{-1}(r)$ is the union of some leaves of $\mathcal{F}(\theta, \psi)$.  \\
$(ii)$ For any $\widehat{\theta}\in {\rm SP}(\alpha)$, $\widehat{\theta}$ is zero along each leaf of $\mathcal{F}(\theta, \psi)$: i.e. $i_\mathcal{L}^*\widehat{\theta}\equiv 0$ holds for any leaf $\mathcal{L}$ of $\mathcal{F}(\theta, \psi)$, where $i_\mathcal{L}\colon \mathcal{L}\to X$ is the holomorphic immersion. 
\end{proposition}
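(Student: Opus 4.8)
The plan is to reduce both assertions to pointwise statements at each point of $U_\psi$, and the key device is to work throughout with $\widehat\psi = \log(1+e^\psi)$ rather than $\psi$ itself. The advantage is that, while $\theta + \sqrt{-1}\ddbar\psi$ may vanish at some points of $U_\psi$, the form $\theta + \sqrt{-1}\ddbar\widehat\psi$ is nowhere zero on $U_\psi$, and moreover $\del\widehat\psi = \frac{e^\psi}{1+e^\psi}\,\del\psi$ is nonvanishing there. Hence $\widehat\psi$ is itself an element of ${\rm PSH}^\infty(X, \theta)\setminus\mathbb{R}$ to which Lemmas \ref{lem:key_pointwise} and \ref{lem:key_local} apply at \emph{every} point of $U_\psi$, with the role of ``$\psi$'' played by $\widehat\psi$.

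For assertion $(i)$, I would argue that $\psi$ is constant along each leaf $\mathcal{L}$ of $\mathcal{F}(\theta, \psi)$; then $\psi^{-1}(r)$ is automatically a union of leaves, since the leaf through any $x\in\psi^{-1}(r)$ is contained in $\psi^{-1}(r)$. Since $T_{\mathcal{F}(\theta, \psi)} = (\del\psi)^\perp$, i.e.\ the holomorphic tangent space of $\mathcal{L}$ is the kernel of the nowhere-vanishing $(1,0)$-form $\del\psi$ on $U_\psi$, the pullback $i_\mathcal{L}^*\del\psi$ vanishes identically. Taking complex conjugates and using that $\psi$ is real gives $i_\mathcal{L}^*\delbar\psi = 0$ as well, so $i_\mathcal{L}^*d\psi = 0$. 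As each leaf is connected, $\psi\circ i_\mathcal{L}$ is constant, which is exactly what is needed.

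For assertion $(ii)$, given $\widehat\theta\in{\rm SP}(\alpha)$ I would first use the $\ddbar$-lemma and elliptic regularity (as in the proof of Lemma \ref{lem:K_alpha}) to write $\widehat\theta = \theta + \sqrt{-1}\ddbar f$ for some $f\in{\rm PSH}^\infty(X, \theta)$. Fixing a leaf $\mathcal{L}$ and a point $x\in\mathcal{L}$, I would apply Lemma \ref{lem:key_local}$(i)$ to $\widehat\psi$ at $x$: this produces local coordinates $(z^1, \dots, z^n)$ in which the leaf is $\{z^1 = 0\}$ and in which, for every $\vp\in{\rm PSH}^\infty(X, \theta)$, the form $\theta + \sqrt{-1}\ddbar\vp$ is pointwise parallel to $\sqrt{-1}\,dz^1\wedge d\overline{z^1}$. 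Taking $\vp = f$ shows $\widehat\theta$ is pointwise parallel to $\sqrt{-1}\,dz^1\wedge d\overline{z^1}$ near $x$; since $i_\mathcal{L}^*dz^1 = d(z^1|_\mathcal{L}) = 0$, we get $i_\mathcal{L}^*\widehat\theta = 0$ on a neighborhood of $x$ in $\mathcal{L}$. As $x$ was arbitrary, $i_\mathcal{L}^*\widehat\theta \equiv 0$.

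The main obstacle I anticipate is not the linear algebra, which is packaged in Lemmas \ref{lem:key_pointwise} and \ref{lem:key_local}, but rather ensuring these pointwise lemmas are available uniformly across all of $U_\psi$ --- in particular at the points where $\theta + \sqrt{-1}\ddbar\psi$ degenerates to zero, where the coordinate $z^1$ adapted to the Monge--Amp\`ere foliation of $\psi$ is not directly produced. Passing to $\widehat\psi$ resolves this, because $\mathcal{F}(\theta, \psi) = \mathcal{F}(\theta, \widehat\psi, U_\psi)$ is genuinely the Monge--Amp\`ere foliation of $\widehat\psi$ everywhere on $U_\psi$. A secondary point requiring care is that the leaves are only immersed (not embedded) complex submanifolds, so ``constant along the leaf'' and ``$i_\mathcal{L}^*\widehat\theta \equiv 0$'' must be deduced from the local statements together with the connectedness of each leaf, rather than from a single chart.
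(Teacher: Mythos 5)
Your proposal is correct and follows essentially the same route as the paper: part $(i)$ is the leafwise constancy of $\psi$ coming from Lemma \ref{lem:key_pointwise} $(i)$ (equivalently $T_{\mathcal{F}(\theta,\psi)}=(\del\psi)^\perp$), and part $(ii)$ is the $\ddbar$-lemma decomposition $\widehat{\theta}=\theta+\sqrt{-1}\ddbar f$ combined with Lemmas \ref{lem:key_pointwise} $(ii)$ and \ref{lem:key_local} $(i)$, applied through $\widehat{\psi}=\log(1+e^\psi)$ exactly as the paper sets up $\mathcal{F}(\theta,\psi)=\mathcal{F}(\theta,\widehat{\psi},U_\psi)$. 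The only cosmetic difference is that you conclude $(ii)$ from the pointwise parallelism of $\widehat{\theta}$ to $\sqrt{-1}\,dz^1\wedge d\overline{z^1}$ (a remark inside the proof of Lemma \ref{lem:key_local}), whereas the paper splits the two terms and uses $i_\mathcal{L}^*\theta=0$ plus the leafwise constancy of $f$; these are equivalent.
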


\begin{proof}
$(i)$ By Lemma \ref{lem:key_pointwise} $(i)$, the function $\psi$ is $\mathcal{F}(\theta, \psi)$-leafwise constant. Therefore, for each leaf $\mathcal{L}$ of $\mathcal{F}(\theta, \psi)$ and for each $r\in (I_\psi)_{\rm reg}$, either $\mathcal{L}\cap \psi^{-1}(r)=\emptyset$ or $\mathcal{L}\subset \psi^{-1}(r)$. holds. \\
$(ii)$ By the $\ddbar$-lemma, there exits a function $\vp\in{\rm PSH}(X, \theta)$ such that $\widehat{\theta}=\theta + \sqrt{-1}\ddbar \vp$. 
The form $\theta$ is zero along the leaves of $\mathcal{F}(\theta, \psi)$ by Lemma \ref{lem:key_pointwise} $(ii)$ and 
$\vp$ is $\mathcal{F}(\theta, \psi)$-leafwise constant by Lemma \ref{lem:key_local} $(i)$, from which the assertion follows. 
\end{proof}

Now we can state the main result of this section. 
\begin{theorem}\label{thm:main_sec3}
The foliation $\mathcal{F}(\theta, \psi)$ is a non-singular holomorphic foliation on $U_\psi$. Moreover, either $(a)$ or $(b)$ holds: \\
$(a)$ There exits a surjective holomorphic map $\Phi\colon X\to R$ to a compact Riemann surface $R$ whose leaves are connected such that $\mathcal{F}(\theta, \psi)$ coincides with the restriction of the foliation on $X$ whose leaves are the fibers of $\Phi$. 
In this case, $\psi=\psi_R\circ \Phi$ for some function $\psi_R\colon R\to \mathbb{R}$. \\
$(b)$ For any $r\in (I_\psi)_{\rm reg}$ and any connected component $A$ of the preimage $\psi^{-1}(r)$, there does not exist a non-constant $\mathcal{F}(\theta, \psi)$-leafwise constant $\mathbb{R}$-valued function on $A$ of class $C^\infty$. 
In this case, for any connected component $U$ of $U_\psi$, there exists an $\mathcal{F}(\theta, \psi)$-adaptive function $h\colon \overline{U}\to [-\infty, +\infty]$ such that $h|_U=\chi_U\circ \psi|_U$ for some strictly increasing function $\chi_U\colon \psi(U)\to \mathbb{R}$ of class $C^\infty$. 
\end{theorem}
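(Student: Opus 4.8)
The plan is to establish the statement in three stages: first the non-singularity and holomorphicity of $\mathcal{F}(\theta, \psi)$, then the dichotomy between $(a)$ and $(b)$, and finally the explicit construction of $\Phi$ in case $(a)$ and of the $\mathcal{F}(\theta, \psi)$-adaptive function $h$ in case $(b)$.

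\textbf{Non-singularity and holomorphicity.}
First I would verify that $\mathcal{F}(\theta, \psi)$ is a genuine non-singular holomorphic foliation on $U_\psi$. By construction $\mathcal{F}(\theta, \psi) = \mathcal{F}(\theta, \widehat{\psi}, U_\psi)$ with $\widehat{\psi} = \log(1 + e^\psi)$, and the preceding subsection already records that $(\theta + \sqrt{-1}\ddbar\widehat{\psi})_x \neq 0$ on all of $U_\psi$ and that $T_{\mathcal{F}(\theta, \psi)} = (\del\psi)^\perp \subset T_{U_\psi}$. Since $(d\psi)_x \neq 0$ on $U_\psi$ (as $r$ ranges over regular values), $\del\psi$ is a nowhere-vanishing $(1,0)$-form there, so $(\del\psi)^\perp$ is a smooth subbundle of rank $n-1$. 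The point is to upgrade this to a \emph{holomorphic} involutive distribution: I would argue that locally each leaf is a level set $\{F = \text{const}\}$ for a holomorphic $F$, using Lemma \ref{lem:key_pointwise}$(i)$ to see that $\del\psi$ is, pointwise, proportional to $dz^1$ where $\{z^1 = \text{const}\}$ cuts out the Monge--Amp\`ere leaves, and then invoking the cited integrability (\cite{So}, \cite{BK}) together with Lemma \ref{lem:2.3}$(ii)$ to conclude the leaves are holomorphically immersed complex hypersurfaces.

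\textbf{The dichotomy.}
Next I would split into the two cases according to whether some leaf-constant function is non-constant on a level-set component. Concretely, case $(a)$ is precisely the negation of $(b)$: $(b)$ fails exactly when there exist $r \in (I_\psi)_{\rm reg}$, a connected component $A$ of $\psi^{-1}(r)$, and a non-constant smooth $\mathcal{F}(\theta, \psi)$-leafwise constant function on $A$. I would feed this non-constant leafwise function into the global geometry: by Proposition \ref{prop:key}$(i)$ each $\psi^{-1}(r)$ is a union of leaves, so the leaf space of $\mathcal{F}(\theta, \psi)$ restricted to a level component is non-trivial, and the existence of a non-constant leaf-constant function forces the leaf space to be (at least locally) a one-dimensional complex manifold. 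The key is to globalize: I would show the leaves are the fibers of a map to a compact Riemann surface $R$, realized by analytically continuing the local leaf-space coordinates and using compactness of $X$; here the condition $\#{\rm SP}(\alpha) > 1$ and ${\rm nd}(\alpha) = 1$ guarantee enough semi-positive representatives and the vanishing in Lemma \ref{lem:2.3} guarantees that the fibration is holomorphic with $\alpha = \Phi^*\alpha_R$ for a K\"ahler class $\alpha_R$. That $\psi = \psi_R \circ \Phi$ then follows since $\psi$ is leafwise constant (Lemma \ref{lem:key_local}$(i)$ via Lemma \ref{lem:key_pointwise}$(i)$) and the fibers are connected.

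\textbf{Construction of $h$ in case $(b)$.}
In case $(b)$ I would produce the $\mathcal{F}(\theta, \psi)$-adaptive function by integrating an ODE in $g_\psi$. By Lemma \ref{lem:key_local}$(ii)$ the function $g_\psi$ is leafwise constant, and in case $(b)$ there are no non-constant leafwise functions on level components, so $g_\psi$ must be constant on each level set; thus $g_\psi = \chi \circ \psi$ locally for some function $\chi$, as anticipated in the introduction. The relation $\sqrt{-1}\ddbar\psi = g_\psi \cdot \sqrt{-1}\del\psi \wedge \delbar\psi$ then reads as a scalar ODE: I would solve $G''/(G')^{?}$ — more precisely choose a strictly increasing $\chi_U$ (equivalently a primitive $G$) with $G''(t) = \chi(t)\,(G'(t))$ so that $h := G \circ \psi$ satisfies $\sqrt{-1}\ddbar h = G''(\psi)\,\sqrt{-1}\del\psi \wedge \delbar\psi + G'(\psi)\,\sqrt{-1}\ddbar\psi = (G''(\psi) + G'(\psi) g_\psi)\,\sqrt{-1}\del\psi\wedge\delbar\psi$ vanishes identically, making $h|_U$ pluriharmonic. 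I would then check $h$ is leafwise constant (immediate, since $h$ is a function of $\psi$), is non-constant (as $\chi_U$ is strictly increasing and $\psi$ non-constant), and extends continuously to $\overline{U}$ with $h^{-1}(\{\max, \min\}) = \del U$ by analyzing the boundary behavior of $\psi$ near the non-regular values; the submersion results of \S\ref{sec:fund_obs_F-adaptive} and the observation that $U$ is a component of $U_\psi$ handle the boundary identification.

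\textbf{The main obstacle.}
I expect the hard part to be the globalization in case $(a)$: passing from a local non-constant leafwise-constant function on a single level component to an honest surjective holomorphic fibration $\Phi \colon X \to R$ over a \emph{compact} Riemann surface defined on \emph{all} of $X$ (not merely on $U_\psi$), together with the holomorphy and K\"ahlerness of $\alpha_R$. This requires controlling the leaf closures and the behavior of the foliation across the critical set $X \setminus U_\psi$, and carefully using the semi-positivity vanishing (Lemma \ref{lem:2.3}) to rule out pathological monodromy of the leaf space. The ODE construction in $(b)$, by contrast, is essentially a one-variable computation once leafwise constancy of $g_\psi$ is in hand.
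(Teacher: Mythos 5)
Your treatment of the non-singularity statement and of case $(b)$ tracks the paper's argument closely: the paper likewise uses Ehresmann's lemma to see that the fibers of $\widehat{\psi}$ inside a component $U$ of $U_\psi$ are connected, writes $g_{\widehat\psi}=G_U\circ\widehat{\psi}$, and solves the ODE $\chi''=-G_U\cdot\chi'$ (via $\chi'=\exp(-\int G_U)$, which also makes the strict monotonicity of $\chi_U$ automatic) to get a pluriharmonic $h=\chi\circ\widehat{\psi}$; the holomorphicity of the foliation is then read off from the pluriharmonicity of $h$. Apart from a sign slip in your ODE ($G''=-\chi G'$, not $G''=\chi G'$), this half is fine.

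The genuine gap is in case $(a)$, exactly where you flag ``the main obstacle'': your proposed globalization by ``analytically continuing the local leaf-space coordinates and using compactness of $X$'' is not a workable method (the leaf space of a codimension-one holomorphic foliation on a real hypersurface is in general non-Hausdorff, and no amount of continuation produces a compact Riemann surface from it). The paper's actual mechanism (Lemma \ref{lem:fibration}) is entirely different and is the key idea you are missing: since $\widetilde{g}$ is a real-valued leafwise-constant $C^\infty$ function on the \emph{compact} Levi-flat level set $A$, Sard's theorem gives an interval $J$ of regular values, and the level sets $Y_j=\widetilde{g}^{-1}(p_j)$ for $p_j\in J$ are \emph{compact complex hypersurfaces} of $X$ (real codimension one in $A$, hence unions of leaves). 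One then plays a divisor game: $L:=[Y_2-Y_3]$ is topologically trivial by Ehresmann, and $L|_{Y_1}$ is holomorphically trivial; either $H^1(X,\mathcal{O}_X)\to H^1(Y_1,\mathcal{O}_{Y_1})$ fails to be injective, in which case the Albanese argument of \cite[Proposition 2.7]{CLPT} produces the fibration, or it is injective, in which case ${\rm Pic}^0(X)\to{\rm Pic}^0(Y_1)$ is finite onto its image, some power $L^m$ is trivial, and $|L^m|$ gives a pencil; Stein factorization then makes the fibers connected. A second, related gap: you deduce $\psi=\psi_R\circ\Phi$ from ``$\psi$ is leafwise constant and the fibers are connected,'' but at that stage only \emph{one} fiber $Y$ is known to be a leaf, so this is circular. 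The paper instead shows $\psi$ is constant on \emph{every} fiber $Z$ by an intersection-number argument: if $d\psi|_{Z_{\rm reg}}\not\equiv 0$ then $(\alpha.c_1([Z]).\{\omega\}^{n-2})=\int_Z(\theta+\sqrt{-1}\ddbar\widehat{\psi})\wedge\omega^{n-2}>0$, contradicting $(\alpha.c_1([Y]).\{\omega\}^{n-2})=0$ and the fact that $Y$ and $Z$ are cohomologous. Without these two steps your case $(a)$ does not close.
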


Theorem \ref{thm:main3} follows from this Theorem \ref{thm:main_sec3} as follows: 
\begin{proof}[Proof of Theorem \ref{thm:main3}]
In the case $(a)$ of Theorem \ref{thm:main_sec3}, the preimage of almost all Jordan loops in $R$ of class $C^\omega$ are real analytic compact Levi-flat hypersurfaces of $X$. 
In the case $(b)$ of Theorem \ref{thm:main_sec3}, $h^{-1}(r)$ is a real analytic compact Levi-flat hypersurface of $X$ for any $r\in {\rm Image}\,\chi_U$. 
\end{proof}

\begin{proof}[Proof of Theorem \ref{thm:main_sec3}]
By Lemmata \ref{lem:key_pointwise} and \ref{lem:key_local}, there exist $\mathcal{F}(\theta, \psi)$-leafwise constant functions 
$\widehat{f}:=f_{\theta, \widehat{\psi}}\colon U_\psi\to \mathbb{R}_{\geq 0}$ and 
$\widehat{g}:=g_{\widehat{\psi}}\colon U_\psi\to \mathbb{R}$ of class $C^\infty$ such that 
$\theta=\widehat{f}\cdot \sqrt{-1}\del\widehat{\psi}\wedge \delbar\widehat{\psi}$ 
and 
$\sqrt{-1}\ddbar \widehat{\psi}=\widehat{g}\cdot \sqrt{-1}\del\widehat{\psi}\wedge \delbar\widehat{\psi}$ holds on $U_\psi$. 

First, we consider the case where, for any $r\in (I_\psi)_{\rm reg}$ and any connected component $A$ of the preimage $\psi^{-1}(r)$, there does not exist a non-constant $\mathcal{F}(\theta, \psi)$-leafwise constant $\mathbb{R}$-valued function on $A$ of class $C^\infty$. 
Take a connected component $U$ of $U_\psi$. 
As $\widehat{\psi}|_U$ is a proper submersion, it follows from Ehresmann's lemma that $U\cap \widehat{\psi}^{-1}(r)$ is connected for any $r\in \widehat{\psi}(U)$, from which one has that there exists a function $G_U\colon (a, b)\to \mathbb{R}$ such that $\widehat{g}|_U=G_U\circ \widehat{\psi}$ holds, where $(a, b):=\widehat{\psi}(U)$, by the assumption. 
Take a non-constant function $\chi\colon (a, b)\to \mathbb{R}$ such that 
\[
\chi'(t) = \exp\left(-\int_{\frac{a+b}{2}}^tG_U(s)\,ds\right). 
\]
Then, it follow from the equation $\chi''(t) = -G_U(t)\cdot \chi'(t)$ that $\sqrt{-1}\ddbar h\equiv 0$ holds for the function $h:=\chi\circ \widehat{\psi}$ on $U$, since $\sqrt{-1}\ddbar h = (\chi'\circ\widehat{\psi})\cdot \sqrt{-1}\ddbar \widehat{\psi} + (\chi''\circ\widehat{\psi})\cdot \sqrt{-1}\del\widehat{\psi}\wedge \delbar\widehat{\psi}$. 
As the function function $h\colon \overline{U}\to [-\infty, +\infty]$ is clearly $\mathcal{F}(\theta, \psi)$-adaptive by construction, the assertion $(b)$ holds in this case by letting $\chi_U(t):=\chi(\log(1+e^t))$. 
Note that, as is clear by the pluriharmonicity of $h$, $\mathcal{F}(\theta, \psi)$ is a non-singular holomorphic foliation in this case. 

Next, we consider the case where there exist $r\in (I_\psi)_{\rm reg}$ and a connected component $A$ of the preimage $\psi^{-1}(r)$ such that there exists a non-constant $\mathcal{F}(\theta, \psi)$-leafwise constant $\mathbb{R}$-valued function $\widetilde{g}$ on $A$ of class $C^\infty$. 
In this case, it follows form the following Lemma \ref{lem:fibration} that there exist a leaf $Y$ of $\mathcal{F}(\theta, \psi)$ and a surjective holomorphic map $\Phi\colon X\to R$ to a compact Riemann surface $R$ whose fibers are connected such that $Y$ is a fiber of $\Phi$. 
In what follows, we show the existence of the function $\psi_R$ on $R$ as in the assertion $(a)$. 
Note that, if such a function $\psi_R$ exists, then it follows from $d\psi=\Phi^*d\psi_R$ that $\Phi$ and $\psi_R$ has no critical point on $U_\psi$ and $\Phi(U_\psi)$, respectively, from which one obtains by using Lemma \ref{lem:key_pointwise} $(i)$ that $\mathcal{F}(\theta, \psi)$ coincides with the foliation on $U_\psi$ whose leaves are the fibers of $\Phi$. 

Now it is sufficient to show that $\psi|_Z$ is constant for a fiber $Z$ of $\Phi$. 
Assuming that it is not the case (and thus $d\psi|_{Z_{\rm reg}}\not\equiv 0$), we prove it by contradiction. If $d\psi|_{Z_{\rm reg}}\not\equiv 0$, it follows from the equation (\ref{eq:ddbar_logexp}) that $(\theta+\sqrt{-1}\ddbar \widehat{\psi})|_{Z_{\rm reg}}\not\equiv0$. 
From this and the semi-positivity of $\theta+\sqrt{-1}\ddbar \widehat{\psi}$, one has that 
\[
(\alpha. c_1([Z]). \{\omega\}^{n-2}) = 
\int_Z (\theta+\sqrt{-1}\ddbar \widehat{\psi})|_Z\wedge \omega|_Z^{n-2} > 0
\]
holds for a K\"ahler form $\omega$ of $X$. 
On the other hand, it follows 
\[
(\alpha. c_1([Y]). \{\omega\}^{n-2}) = 
\int_Y \theta|_Y\wedge \omega|_Y^{n-2} = 0
\]
from Proposition \ref{prop:key} $(ii)$, since $Y$ is a leaf of $\mathcal{F}(\theta, \psi)$. 
As both $Y$ and $Z$ are fibers of $\Phi$, these lead to the contradiction. 
\end{proof}

\begin{lemma}\label{lem:fibration}
Assume that there exist $r\in (I_\psi)_{\rm reg}$ and a connected component $A$ of the preimage $\psi^{-1}(r)$ such that there exists a non-constant $\mathcal{F}(\theta, \psi)$-leafwise constant $\mathbb{R}$-valued function $\widetilde{g}$ on $A$ of class $C^\infty$. 
Then there exists a leaf $Y$ of $\mathcal{F}(\theta, \psi)$ and a surjective holomorphic map $\Phi\colon X\to R$ to a compact Riemann surface $R$ whose fibers are connected such that $Y$ is a fiber of $\Phi$. 
\end{lemma}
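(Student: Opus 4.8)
The plan is to first manufacture a single \emph{compact} leaf out of the hypothesis, then to show that a whole neighborhood of it is foliated by compact leaves, and finally to assemble these into a holomorphic fibration over a curve.

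First I would produce a compact leaf. The component $A$ is a compact smooth real hypersurface of $X$ (a connected component of the regular level set $\psi^{-1}(r)$), it is contained in $U_\psi$, and by Proposition \ref{prop:key}$(i)$ the foliation $\mathcal{F}(\theta,\psi)$ restricts on it to a foliation whose leaves are the complex $(n-1)$-dimensional leaves lying inside $A$; these have real codimension one in $A$. Since $\widetilde{g}$ is $\mathcal{F}(\theta,\psi)$-leafwise constant, $d\widetilde{g}$ annihilates the leaf directions, so $\widetilde{g}$ varies only along the one-dimensional transverse direction inside $A$. I would then take, via Sard's theorem, a regular value $c$ of $\widetilde{g}$: the preimage $\widetilde{g}^{-1}(c)$ is a compact smooth real hypersurface of $A$ saturated by leaves of the same dimension, so each of its connected components is a single compact leaf $Y$. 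By Proposition \ref{prop:key}$(ii)$, $\theta|_Y\equiv 0$, so $Y$ is a compact connected complex hypersurface of $X$ along which $\alpha$ integrates to zero.

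Next I would show the foliation is a local fibration near $Y$. Because $U_\psi\cap K_\alpha=\emptyset$ (on $U_\psi$ one has $(d\psi)_x\neq 0$, while $K_\alpha\subset\{d\psi=0\}$), the foliation is non-singular holomorphic near $Y$, and its holonomy is a representation of $\pi_1(Y)$ into germs of holomorphic transformations of a transversal disk $T\cong(\mathbb{C},0)$ meeting $Y$ at one point. The globally defined, leafwise-constant function $\psi$ induces a holonomy-invariant function on $T$ whose level curve $\{\psi|_T=r\}$ corresponds to $A$; along that curve $\widetilde{g}$ induces a holonomy-invariant function which, at the base point, is strictly monotone (as $c$ is a regular value of $\widetilde g$). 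Hence every holonomy germ fixes the real-analytic arc $\{\psi|_T=r\}$ pointwise, and is therefore the identity by the identity theorem. With trivial holonomy, the holomorphic Reeb stability theorem gives a saturated product neighborhood of $Y$: the nearby leaves are compact, $T$ carries a holomorphic first integral, and $N_{Y/X}$ is topologically trivial (and, in view of $\theta\in{\rm SP}(\alpha)$, ${\rm nd}(\alpha)=1$, and \cite[Theorem 1.4]{K2020}, unitary flat).

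Finally I would globalize. The set of points lying on compact leaves is open; using the connectedness of $X$, the semi-positivity of the elements of ${\rm SP}(\alpha)$, and the flatness of $N_{Y/X}$ to forbid any leaf from acquiring positive normal bundle, I would propagate compactness so that a dense open subset of $X$ is swept out by mutually disjoint compact leaves. These form a one-complex-parameter family of $(n-1)$-cycles; the induced holomorphic arc into the Barlet cycle space $\mathcal{C}_{n-1}(X)$ has a compactification whose normalization is a compact Riemann surface $R$, and the universal family yields, after Stein factorization, a surjective holomorphic map $\Phi\colon X\to R$ with connected fibers, one of which is $Y$. The step I expect to be the main obstacle is exactly this last one: promoting the product structure near a single compact leaf to an honest holomorphic fibration over a \emph{smooth} compact curve requires controlling the leaves over the critical values of $\psi$ and over $K_\alpha$, ruling out non-compact or accumulating leaves, and ensuring the leaf space is a manifold with connected fibers — which is precisely where the K\"ahler hypothesis, the semi-positivity of $\alpha$, and the numerical-dimension-one condition must be used together.
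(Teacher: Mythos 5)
Your first two steps are sound and the first one essentially matches the paper: a regular level set of $\widetilde{g}$ inside $A$ is a compact complex hypersurface saturated by (hence, by dimension and connectedness, equal to a union of) leaves, so you obtain a compact leaf $Y$; and your holonomy computation (the holonomy preserves both $\psi$ and $\widetilde{g}$, hence fixes the arc $T\cap A$ pointwise, hence is the identity by accumulation of fixed points) correctly yields a saturated product neighborhood of $Y$ with a holomorphic first integral. The problem is the third step, which you yourself flag as the main obstacle: as written it is a genuine gap, not a technicality. The assertion that ``a dense open subset of $X$ is swept out by mutually disjoint compact leaves'' does not follow from openness of the set of compact leaves plus connectedness of $X$ --- the foliation $\mathcal{F}(\theta,\psi)$ is only defined on $U_\psi$, and even there compact leaves can accumulate on non-compact ones, so there is no propagation argument available without further input. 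The subsequent passage to the Barlet space is also only gestured at: one must show that the relevant component of $\mathcal{C}_{n-1}(X)$ is compact (Lieberman/Fujiki, using the K\"ahler hypothesis), that it is one-dimensional, and that the graph of the universal family surjects onto $X$ with the cycles forming the fibers of an actual morphism $X\to R$ rather than merely a meromorphic pencil. None of this is carried out, and it is precisely the content of the lemma.

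It is worth contrasting this with how the paper closes the argument, because the route is genuinely different and much shorter. Instead of globalizing the local fibration geometrically, the paper takes \emph{three} regular levels $Y_1,Y_2,Y_3=\widetilde{g}^{-1}(p_j)$ inside $A$: Ehresmann's lemma applied to $\widetilde{g}$ over an interval of regular values makes $Y_2$ and $Y_3$ homologous, so $L:=[Y_2-Y_3]$ is topologically trivial, while $L|_{Y_1}$ is holomorphically trivial since $Y_1$ is disjoint from $Y_2\cup Y_3$. Then either $H^1(X,\mathcal{O}_X)\to H^1(Y_1,\mathcal{O}_{Y_1})$ fails to be injective, in which case the Albanese map already produces the fibration by \cite[Proposition 2.7]{CLPT}, or ${\rm Pic}^0(X)\to{\rm Pic}^0(Y_1)$ is finite onto its image and $L^{\otimes m}$ is holomorphically trivial for some $m$, so the pencil $|L^{\otimes m}|$ gives $\Phi\colon X\to\mathbb{P}^1$ with $Y_2,Y_3$ as fibers; connectedness of fibers is then arranged by Stein factorization. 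This cohomological detour is exactly what replaces the missing globalization in your sketch, and if you want to salvage your cycle-space approach you would need to supply an argument of comparable substance (compactness and one-dimensionality of the cycle-space component, surjectivity and generic injectivity of the evaluation map of the universal family) rather than appealing to density of compact leaves.
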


\begin{proof}
By Sard's theorem, there exists an open non-empty connected interval $J\subset \mathbb{R}$ which is included in the set of all the regular values of the function $\widetilde{g}\colon A\to \mathbb{R}$ (Note that the set of all the critical values of $\widetilde{g}$ is closed since the map $\widetilde{g}$, which is a map from a compact space to a Hausdorff space, is a closed map). 
Take different three points $p_1, p_2$, and $p_3$ from $J$, 
and set $Y_j:=\widetilde{g}^{-1}(p_j)$ for $j=1, 2, 3$. 
As each $p_j$ is a regular value, $Y_j$ is a real submanifold of $A$. 
Therefore, as $\widetilde{g}$ is $\mathcal{F}(\theta, \psi)$-leafwise constant and each leaf of $\mathcal{F}(\theta, \psi)$ is a holomorphically immersed complex submanifold, it follows that $Y_j$ is (holomorphically embedded image of) a complex submanifold of $X$ for $j=1, 2, 3$. 
Moreover, by applying Ehresmann's lemma to the map $\widetilde{g}|_{\widetilde{g}^{-1}(J)}$, one obtains that $Y_2$ is homotopic to $Y_3$, from which it follows that the line bundle $L:=[Y_2-Y_3]$ on $X$ is topologically trivial. Note that $L|_{Y_1}$ is holomorphically trivial by the construction. 

Now we may assume that the restriction map $H^1(X, \mathcal{O}_X)\to H^1(Y_1, \mathcal{O}_{Y_1})$ is injective, since otherwise the natural map $X\to {\rm Alb}(X)/{\rm Alb}(Y_1)$ induced by the Albanese map defines a fibration such that $Y_1$ is a fiber by  \cite[Proposition 2.7]{CLPT} (see also \cite{N}). 
As the natural map ${\rm Pic}^0(X)\to {\rm Pic}^0(Y_1)$ is a finite covering of the image in this case, one has that there exists a positive integer $m$ such that $L^m:=L^{\otimes m}$ is the holomorphically trivial line bundle on $X$. 
Thus one obtains a fibration $X\to \mathbb{P}^1=:R$ to the projective line by considering the complete linear system $|L^m|$ of which $Y_2$ and $Y_3$ are fibers. 
Finally one can modify the fibration so that its fibers are connected by considering a branched finite covering of $R$, which proves the lemma. 
\end{proof}


\section{Observation in Cases $(a)$ and $(b)$ in Theorem \ref{thm:main_sec3}} \label{sec:4}

Let 
$X$ be a connected compact K\"ahler manifold and 
$\alpha\in H^{1, 1}(X, \mathbb{R})$ be a class with ${\rm nd}(\alpha)=1$ and $\#{\rm SP}(\alpha)>1$. 
In this section, we investigate the foliation $\mathcal{F}(\theta, \psi)$ and the domain $U_\psi$ for 
$\theta\in {\rm SP}(\alpha)$ and $\psi\in{\rm PSH}^\infty(X, \theta)\setminus\mathbb{R}$ in the following two cases: First in the case where the assertion $(a)$ of Theorem \ref{thm:main_sec3} holds for some $\theta\in {\rm SP}(\alpha)$ and $\psi\in{\rm PSH}^\infty(X, \theta)\setminus\mathbb{R}$, next in the case where the assertion $(b)$ of Theorem \ref{thm:main_sec3} holds for any $\theta\in {\rm SP}(\alpha)$ and any $\psi\in{\rm PSH}^\infty(X, \theta)\setminus\mathbb{R}$. 

From this section, we often use the following\footnote{We first showed Lemma \ref{lem:hodge_index_thm} only when $X$ is either a surface or a projective manifold by using Hodge index theorem and the hard Lefschetz theorem (see \cite[Theorem 6.33]{V} and \cite[Theorem 6.25]{V} for example). The proof in the general case is taught by Prof. Valentino Tosatti. }: 
\begin{lemma}\label{lem:hodge_index_thm}
Let 
$X$ be a compact complex manifold of dimension $n$, 
$\theta$ an element of ${\rm SP}(\alpha)$, 
$\omega$ a K\"ahler form of $X$, 
and $\widehat{\theta}$ be a $C^\infty$'ly smooth $d$-closed semi-positive $(1, 1)$-form. 
Assume that both $\{\widehat{\theta}\wedge \theta\}$ and $\{\widehat{\theta}\wedge \widehat{\theta}\}$ are zero in $H^{2, 2}(X, \mathbb{C})$ and that 
$\textstyle\int_X\widehat{\theta}\wedge \omega^{n-1}
= \textstyle\int_X\theta\wedge \omega^{n-1}$ holds. 
Then $\widehat{\theta}$ is an element of ${\rm SP}(\alpha)$. 
\end{lemma}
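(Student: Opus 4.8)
The plan is to show that $\widehat{\theta}$ is cohomologous to $\theta$ (hence represents $\alpha$) and then invoke the already-established properties. Since $\widehat{\theta}$ is by hypothesis a $d$-closed semi-positive $(1,1)$-form, once I know $\{\widehat{\theta}\} = \alpha$ in $H^{1,1}(X,\mathbb{R})$ I will have $\widehat{\theta} \in {\rm SP}(\alpha)$ by definition. So the entire content is the cohomological identity $\{\widehat{\theta}\} = \{\theta\}$.

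First I would set $\beta := \{\widehat{\theta}\} - \{\theta\} = \{\widehat{\theta} - \theta\} \in H^{1,1}(X,\mathbb{R})$ and aim to prove $\beta = 0$. The normalization hypothesis $\int_X \widehat{\theta}\wedge\omega^{n-1} = \int_X\theta\wedge\omega^{n-1}$ says exactly that $\int_X \beta \wedge \{\omega\}^{n-1} = 0$, i.e.\ $\beta$ is primitive-like with respect to $\omega$ in the sense that its $\omega^{n-1}$-pairing vanishes. The plan is to compute $\int_X \beta^{\wedge 2} \wedge \{\omega\}^{n-2}$ and show it is zero, then deduce $\beta = 0$ from a Hodge-theoretic positivity (Hodge--Riemann type) statement. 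I would expand
\[
\beta^{\wedge 2} = \{\widehat{\theta}\}^{\wedge 2} - 2\{\widehat{\theta}\wedge\theta\} + \{\theta\}^{\wedge 2}.
\]
The first two terms vanish in $H^{2,2}(X,\mathbb{C})$ by the two hypotheses $\{\widehat{\theta}\wedge\widehat{\theta}\} = 0$ and $\{\widehat{\theta}\wedge\theta\} = 0$. The last term vanishes because ${\rm nd}(\alpha) = 1$ forces $\alpha^{\wedge 2} = \{\theta\}^{\wedge 2} = 0$ (indeed Lemma \ref{lem:2.3}$(ii)$ gives $\theta^{\wedge 2} \equiv 0$ at the form level). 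Hence $\beta^{\wedge 2} = 0$ in $H^{2,2}(X,\mathbb{C})$, and in particular $\int_X \beta^{\wedge 2}\wedge\{\omega\}^{n-2} = 0$.

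The decisive step is then the following Hodge-theoretic input: for a compact Kähler $n$-manifold with Kähler class $\{\omega\}$, the quadratic form $q(\beta) := \int_X \beta^{\wedge 2}\wedge\{\omega\}^{n-2}$ restricted to the primitive part of $H^{1,1}(X,\mathbb{R})$ (those $\beta$ with $\int_X \beta\wedge\{\omega\}^{n-1} = 0$) is negative definite, by the Hodge--Riemann bilinear relations. Our $\beta$ is primitive by the normalization hypothesis and satisfies $q(\beta) = 0$; negative-definiteness therefore forces $\beta = 0$. This is where the Kähler hypothesis is genuinely used, and it is the step that, as the footnote indicates, requires the general Hodge-index/Hodge--Riemann machinery rather than an elementary surface argument.

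\textbf{The main obstacle} I anticipate is bookkeeping at the level of the Hodge decomposition: $\beta$ is a real $(1,1)$-class but the Hodge--Riemann form is naturally stated on $H^2(X,\mathbb{C})$, so I must confirm that the primitive $(1,1)$ part carries the correct sign $(-1)$ and that the primitivity condition I extract from the normalization hypothesis matches the primitivity used in the bilinear relations (primitive meaning $\Lambda_\omega \beta$ integrates against $\omega^{n-1}$ to zero, equivalently $\beta\wedge\omega^{n-1}=0$ after the Lefschetz identifications). Once the sign and the primitivity conventions are aligned, the conclusion $\beta = 0$ is immediate, and $\widehat{\theta} \in {\rm SP}(\alpha)$ follows at once.
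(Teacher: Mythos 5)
Your proposal is correct and follows essentially the same route as the paper: the paper sets $\eta:=\theta-\widehat{\theta}$, observes $\int_X\eta\wedge\omega^{n-1}=0$ and $\int_X\eta\wedge\eta\wedge\omega^{n-2}=0$ (using the two hypotheses together with $\alpha^{\wedge 2}=0$ from ${\rm nd}(\alpha)=1$), and concludes $\{\eta\}=0$ from the negative definiteness of the intersection form on primitive $(1,1)$-classes, citing the Hodge--Riemann bilinear relations in the form of \cite[Theorem A]{DN}. The only cosmetic difference is that you invoke the classical Hodge--Riemann relations directly while the paper cites the Dinh--Nguy\^en generalization; since the wedge factor is a power of a single K\"ahler form, both give the same conclusion.
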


\begin{proof}
It is sufficient to show that $\{\eta\}=0\in H^{1, 1}(X, \mathbb{R})$ for the form $\eta:=\theta-\widehat{\theta}$. 
Consider the bilinear form $Q$ defined by 
\[
Q(\beta, \gamma) := \int_X\beta\wedge \overline{\gamma} \wedge \omega^{n-2}. 
\]
As $\textstyle\int_X\eta\wedge \omega^{n-1}=0$ and $Q(\eta, \eta)=0$ by the assumption, the lemma follows from \cite[Theorem A]{DN} (See also the argument in the end of \cite[Remark 3.4]{TZ}). 
\end{proof}

\subsection{Observation when the assertion $(a)$ holds for some $(\theta, \psi)$}\label{sec:4.1}
First, let us investigate the case where the assertion $(a)$ of Theorem \ref{thm:main_sec3} holds for some $\theta\in {\rm SP}(\alpha)$ and $\psi\in{\rm PSH}^\infty(X, \theta)\setminus\mathbb{R}$. 
Take such $\theta$ and $\psi$. 
Let $\Phi\colon X\to R$ and $\psi_R\colon R\to \mathbb{R}$ be those as in the assertion $(a)$. 
Then one can show the following: 
\begin{proposition}\label{prop:case_a}
Let $X, \alpha, \theta, \psi, \Phi$, and $R$ be as above. 
Then the following holds: \\
$(i)$ There exists a K\"ahler class $\alpha_R$ of $R$ such that $\alpha=\Phi^*\alpha_R$. \\
$(ii)$ Any element of ${\rm SP}(\alpha)$ is zero along each fiber of $\Phi$. \\
$(iii)$ The set $K_\alpha$ is included in the set of all the critical points of $\Phi$. \\
$(iv)$ For any $p\in R$, the set of all the singular points $(\Phi^{-1}(p))_{\rm sing}$ of the (set-theoretical) fiber $\Phi^{-1}(p)$ is included in $K_\alpha$. 
\end{proposition}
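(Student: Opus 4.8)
The plan is to exploit that the general fibres of $\Phi$ are the leaves of $\mathcal{F}(\theta,\psi)$. For $(i)$, since $\psi=\psi_R\circ\Phi$ and the leaves over regular values of $\psi_R$ are exactly the fibres, Proposition \ref{prop:key} $(ii)$ gives $i_F^*\theta\equiv0$ for a dense family of fibres $F$, hence for every fibre by continuity of $\theta$. On the submersion locus $X^{\rm sm}:=\Phi^{-1}(R\setminus\Sigma)$, where $\Sigma$ is the finite set of critical values, I would pick coordinates in which $\Phi$ is a projection and apply Lemma \ref{lem:lin_alg} together with $d\theta=0$ to get $\theta=\Phi^*\theta_R$ for a smooth semi-positive $d$-closed form $\theta_R$ on $R\setminus\Sigma$. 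Extending $\theta_R$ trivially across $\Sigma$ yields a closed positive $(1,1)$-current of finite mass on $R$ (finiteness by comparing $\int_{R\setminus\Sigma}\theta_R$ with $\int_X\theta\wedge\omega^{n-1}$ through fibre integration), whose class $\alpha_R$ satisfies $\alpha=\Phi^*\alpha_R$; since $\int_R\alpha_R>0$ and $R$ is a curve, $\alpha_R$ is Kähler.

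For $(ii)$, any $\widehat\theta\in{\rm SP}(\alpha)$ is zero along every leaf of $\mathcal{F}(\theta,\psi)$ by Proposition \ref{prop:key} $(ii)$; as these leaves are the fibres over a dense set of values and $\widehat\theta$ is continuous, $\widehat\theta$ is zero along every fibre. The structural input used for $(iii)$ and $(iv)$ is that every $\varphi\in{\rm PSH}^\infty(X,\theta)$ factors through $\Phi$: by Lemma \ref{lem:key_local} $(i)$ it is $\mathcal{F}(\theta,\psi)$-leafwise constant, hence constant on each fibre over a regular value of $\psi_R$, hence $\varphi=\varphi_R\circ\Phi$ by density, connectedness of fibres, and continuity. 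After shifting by a fixed potential the same holds for ${\rm PSH}^\infty(X,\theta_0)$ for any other $\theta_0\in{\rm SP}(\alpha)$.

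For $(iii)$, I would compute $K_\alpha$ via Lemma \ref{lem:K_alpha} using the convenient base form $\theta_0:=\Phi^*\omega_R$, where $\omega_R$ is a genuine smooth Kähler form in $\alpha_R$; then $\theta_0\in{\rm SP}(\alpha)$, and $\Phi^*$ maps ${\rm PSH}^\infty(R,\omega_R)$ into ${\rm PSH}^\infty(X,\theta_0)$ (on $X^{\rm sm}$ this is a pullback of a nonnegative form, while at critical points $\partial\Phi=0$ makes both $\theta_0$ and $\sqrt{-1}\ddbar\Phi^*v$ vanish). On the curve $R$ the analogous set is empty, $K_{\omega_R}=\emptyset$, since near any point a $C^2$-small perturbation of $0$ with prescribed nonzero differential is $\omega_R$-psh. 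Hence for a submersion point $x$ there is $v\in{\rm PSH}^\infty(R,\omega_R)$ with $(dv)_{\Phi(x)}\neq0$, and $\Phi^*v\in{\rm PSH}^\infty(X,\theta_0)$ has $(d\Phi^*v)_x=\Phi^*(dv)_{\Phi(x)}\neq0$, so $x\notin K_\alpha$. Thus $K_\alpha\subseteq\{x\mid (d\Phi)_x=0\}$.

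For $(iv)$, fix $x_0\in(\Phi^{-1}(p))_{\rm sing}$. Since the fibre is a hypersurface singular at $x_0$, its local reduced equation has vanishing differential there, so $(d\Phi)_{x_0}=0$. By the factorization every $\psi'\in{\rm PSH}^\infty(X,\theta_0)$ is constant on $\Phi^{-1}(p)$, and I would deduce $(d\psi')_{x_0}=0$ from the local structure of the singular hypersurface: along the regular part near $x_0$ the gradient $\nabla\psi'$ is normal to the fibre, and the limiting normal directions at $x_0$ (or, in the cylindrical case, the fibre direction along which $\psi'$ is already constant) span enough of $T_{x_0}X$ that continuity of $\nabla\psi'$ forces $\nabla\psi'(x_0)=0$. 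This is the propagation of leafwise-constancy across the singular set treated in Lemma \ref{lem:fol_sing_lwconst}, applied with the local pluriharmonic function $h={\rm Re}\,\Phi$. I expect this last step — making the limiting-tangent / local-parametrization argument rigorous for an \emph{arbitrary} singular fibre, rather than the codimension-one regular point of the singular set that Lemma \ref{lem:fol_sing_lwconst} covers directly — to be the main obstacle, with the descent regularity across multiple or critical fibres in $(i)$ and $(iii)$ as the secondary technical point.
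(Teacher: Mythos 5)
The main gap is in $(iv)$, and you correctly flag it but do not close it. The step ``the limiting normal directions at $x_0$ span enough of $T_{x_0}X$'' fails precisely in the cases where it is needed: at a singular point of the reduced fibre whose tangent cone is a multiple hyperplane (e.g.\ a cuspidal fibre $\{w_2^2=w_1^3\}$ of a fibred surface) all limiting tangent hyperplanes of the regular part coincide, so continuity of $\nabla\psi'$ pins down only one component of $(d\psi')_{x_0}$; and Lemma \ref{lem:fol_sing_lwconst} cannot be substituted, since, as you note, it only treats a regular point of a codimension-one component of the critical set. The paper resolves this by reversing the implication: if $x\notin K_\alpha$, Lemma \ref{lem:K_alpha} produces $\vp\in{\rm PSH}^\infty(X,\Phi^*\omega_R)$ with $(d\vp)_x\not=0$; then $(\Phi^*\omega_R+\sqrt{-1}\ddbar\log(1+e^\vp))_x\not=0$, so the Monge--Amp\`ere foliation of $\log(1+e^\vp)$ is defined near $x$ and its leaf through $x$ is a \emph{smooth} complex hypersurface which, because $\vp$ is $\Phi$-fibrewise constant and $(d\vp)_x\not=0$, must lie inside the fibre through $x$, forcing $x$ to be a regular point of that fibre. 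This contrapositive via the Monge--Amp\`ere leaf is the missing idea.

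For $(i)$ and $(ii)$ your descent-and-extension route is plausible in outline but leaves two real holes. First, the density of the ``leaf-fibres'' among all fibres is not automatic: $\Phi(U_\psi)=\psi_R^{-1}((I_\psi)_{\rm reg})$ need not be dense in $R$ (nothing forbids $\psi_R$ from being constant on a nonempty open set, whose preimage then lies outside $U_\psi$), so ``zero along a dense family of fibres, hence along every fibre by continuity'' needs a different justification --- for instance, all fibres are homologous, so $\int_Z\widehat{\theta}\wedge\omega^{n-2}=(\alpha.\,c_1([Z]).\{\omega\}^{n-2})=0$ for \emph{every} fibre $Z$ once Proposition \ref{prop:key} $(ii)$ gives it for one, and semi-positivity then kills $\widehat{\theta}|_{Z_{\rm reg}}$; this is exactly the computation at the end of the proof of Theorem \ref{thm:main_sec3}. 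Second, extending $\theta_R$ by zero across $\Sigma$ and asserting $\alpha=\Phi^*\alpha_R$ is not free: $\theta$ and $\Phi^*$ of the extension agree only off $\Phi^{-1}(\Sigma)$, and their difference is a priori a combination of integration currents on components of singular fibres, whose classes (think of multiple fibres) are not pullbacks from $R$. The paper sidesteps both issues at once with Lemma \ref{lem:hodge_index_thm}: take $\widehat{\theta}:=\Phi^*(\rho\,\omega_R)$ with ${\rm Supp}\,\rho\subset\Phi(U_\psi)$, check $\widehat{\theta}\wedge\theta=0$ and $\widehat{\theta}\wedge\widehat{\theta}=0$, and conclude $\widehat{\theta}\in{\rm SP}(\alpha)$ after scaling; then $(ii)$ follows from the $\ddbar$-lemma and the maximum principle on the compact fibres, with no density needed. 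Your $(iii)$ coincides with the paper's argument and is fine.
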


\begin{proof}
$(i)$ Fix a K\"ahler form $\omega_R$ of $R$ 
and a non-constant non-negative function $\rho\colon R\to \mathbb{R}_{\geq 0}$ of class $C^\infty$ such that the support ${\rm Supp}\,\rho$ of $\rho$ is included in $\Phi(U_\psi)$. 
Then, as $\widehat{\theta}:=\Phi^*(\rho\cdot \omega_R)$ satisfies $d\widehat{\theta}=0, \widehat{\theta}\wedge \widehat{\theta}=0$, and $\widehat{\theta}\wedge \theta=0$ (the last equation follows from Lemma \ref{lem:key_pointwise} $(ii)$), 
it follows from Lemma \ref{lem:hodge_index_thm} that we may assume $\widehat{\theta}\in{\rm SP}(\alpha)$ by replacing $\rho$ with $A\cdot\rho$ for a suitable positive constant $A$. 
Therefore it follows that $\alpha=\Phi^*\alpha_R$ for the class $\alpha_R:=\{\rho\cdot \omega_R\}$. Note that $\alpha_R$ is a K\"ahler class of $R$ since $\textstyle\int_R\rho\cdot \omega_R>0$ holds. \\
$(ii)$ Take a K\"ahler form $\omega_R$ of $R$ such that $\{\omega_R\}=\alpha_R$. 
Then, by the $\ddbar$-lemma, any element $\widehat{\theta}$ of ${\rm SP}(\alpha)$ can be written as $\widehat{\theta}=\Phi^*\omega_R+\sqrt{-1}\ddbar \vp$ by using some element $\vp\in{\rm PSH}^\infty(X, \Phi^*\omega_R)$. 
As $\Phi^*\omega_R$ is zero along each fiber, 
$\vp$ is plurisubharmonic along each fiber of $\Phi$, from which one can deduce that $\vp$ is $\Phi$-fiberwise constant by the maximum principle. Therefore $\widehat{\theta}$ is also zero along the fibers. \\
$(iii)$ Let $x\in X$ be a regular point of $\Phi$. 
Take a K\"ahler form $\omega_R$ of $R$ such that $\{\omega_R\}=\alpha_R$ and a function $f\colon R\to \mathbb{R}$ of class $C^\infty$ such that $(df)_{\Phi(x)}\not=0$. 
As $\omega_R>0$ and $X$ is compact, $\ve f\in {\rm PSH}^\infty(R, \omega_R)$ holds for a sufficiently small positive constant $\ve$. 
Thus, by letting $\theta':=\Phi^*\omega_R\in {\rm SP}(\alpha)$, one has that $\vp:=\ve\cdot (f\circ \Phi)$ is an element of ${\rm PSH}^\infty(X, \theta')$. 
As $(d\vp)_x\not=0$ by construction, it follows that $x\not\in K_\alpha$, from which the assertion holds. \\
$(iv)$ Take a point $x\in X\setminus K_\alpha$. 
It follows from Lemma \ref{lem:K_alpha} that there exists an element $\vp\in {\rm PSH}^\infty(X, \theta')$ such that $(d\vp)_x\not=0$, where $\theta'=\Phi^*\omega_R$ is the form as above. 
Note that $\vp$ is plurisubharmonic along each fiber of $\Phi$ (since $\theta'$ is fiberwise zero), from which one can deduce that $\vp$ is $\Phi$-fiberwise constant by the maximum principle. 
As we have seen in \S \ref{sec:3}, one can consider the foliation $\mathcal{F}(\theta', \widehat{\vp}, B)$ for the function $\widehat{\vp}:=\log(1+e^\vp)$ and a small neighborhood $B$ of $x$. 
Let $Y\subset B$ be the leaf of $\mathcal{F}(\theta', \widehat{\vp}, B)$ which passes through the point $x$. 
As $\vp$ is $\Phi$-fiberwise constant and $\vp|_Y$ is constant by Lemma \ref{lem:key_pointwise} $(i)$, 
it follows that $Y\subset \Phi^{-1}(p)$ for some $p\in R$ by shrinking $B$ if necessary. 
Therefore, 
it follows that $x$ is a non-singular point of the fiber $\Phi^{-1}(p)$. 
\end{proof}

As is clear by the argument in the proof of Proposition \ref{prop:case_a} $(ii)$, 
we need to define the foliation $\mathcal{F}_\alpha$ so that its leaves are the fibers of $\Phi$. 
Note that, once we adopt such definition of $\mathcal{F}_\alpha$, it easily follows that $K_\alpha^{\rm ess}=\emptyset$ by considering the $\mathcal{F}_\alpha$-adaptive function $h_R\circ \Phi$ on $X$, where, for two different regular values $p$ and $q$ of $\Phi$, $h_R$ is a harmonic function on $R\setminus \{p, q\}$ such that $h_R(w)\to -\infty$ holds as $w\to p$ and that $h_R(w)\to +\infty$ holds as $w\to q$. 

\subsection{Observation when the assertion $(b)$ holds for any $(\theta, \psi)$}\label{sec:4.2}
Next, let us investigate the case where the assertion $(b)$ of Theorem \ref{thm:main_sec3} holds for any $\theta\in {\rm SP}(\alpha)$ and any $\psi\in{\rm PSH}^\infty(X, \theta)\setminus\mathbb{R}$. 
In this case, the following {\bf Condition $(\heartsuit)$} holds: 
\begin{description}
\item[Condition $(\heartsuit)$] For any $\theta\in {\rm SP}(\alpha)$, $\psi\in{\rm PSH}^\infty(X, \theta)\setminus\mathbb{R}$, $r\in (I_\psi)_{\rm reg}$, and any connected component $A$ of the preimage $\psi^{-1}(r)$, there does not exist a non-constant $\mathcal{F}(\theta, \psi)$-leafwise constant $\mathbb{R}$-valued function on $A$ of class $C^\infty$. 
For any such $\theta$ and $\psi$, 
there exists an $\mathcal{F}(\theta, \psi)$-adaptive function $h\colon \overline{U}\to [-\infty, +\infty]$ such that $h|_U=\chi_U\circ \psi|_U$ for some strictly increasing function $\chi_U\colon \psi(U)\to \mathbb{R}$ of class $C^\infty$ for any connected component $U$ of $U_\psi$. 
\qed
\end{description}

In what follows, we use the following notation for $\theta\in {\rm SP}(\alpha)$: 
Let $\mathcal{U}(\theta)$ be the set of all the triples $(\psi, U, J)$ such that $\psi\in {\rm PSH}^\infty(X, \theta)\setminus \mathbb{R}$, $J$ is a connected open interval included in $(I_\psi)_{\rm reg}$, and that $U$ is a connected component of $\psi^{-1}(J)$. 
Let $\mathcal{U}_c(\theta)$ be the set of all the triples $(\psi, U, J)\in \mathcal{U}(\theta)$ such that $J$ is a relatively compact connected open interval included in $(I_\psi)_{\rm reg}$. 
For $(\psi, U, J)\in \mathcal{U}(\theta)$, 
we denote by $\mathcal{H}(\psi, U, J)$ the set of all the pluriharmonic functions $h$ on $U$ such that $h=\chi\circ \psi|_U$ holds for some function $\chi\colon J\to \mathbb{R}$. 
We also use the notation $\mathcal{H}^*(\psi, U, J):=\mathcal{H}(\psi, U, J)\setminus \mathbb{R}$ for each $(\psi, U, J)\in \mathcal{U}(\theta)$. 
Note that the set $\mathcal{H}^*(\psi, U, J)$ is non-empty by {\bf Condition $(\heartsuit)$}. 
First we show the following: 
\begin{lemma}\label{lem:S42first}
Let $(\psi, U, J)$ be an element of $\mathcal{U}(\theta)$ 
and $h=\chi_U\circ \psi|_U$ be (the restriction of) a pluriharmonic function as in {\bf Condition $(\heartsuit)$}. 
Then it holds that 
$\mathcal{H}(\psi, U, J)=\{c_1h+c_2\mid c_1, c_2\in \mathbb{R}\}$ 
and 
$\mathcal{H}^*(\psi, U, J)=\{c_1h+c_2\mid c_1, c_2\in \mathbb{R}, c_1\not=0\}$. 
Especially, any element of $\mathcal{H}^*(\psi, U, J)$ is the restriction of an $\mathcal{F}(\theta, \psi)$-adaptive function as in {\bf Condition $(\heartsuit)$}. 
\end{lemma}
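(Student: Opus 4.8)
The plan is to show that $\mathcal{H}(\psi, U, J)$, which is visibly a real vector space, is exactly two-dimensional, spanned by the constants and the single pluriharmonic function $h = \chi_U \circ \psi|_U$ furnished by \textbf{Condition $(\heartsuit)$}. Since any function of the form $c_1 h + c_2$ is pluriharmonic and factors through $\psi|_U$ (because $h$ does), the inclusion $\{c_1 h + c_2\} \subset \mathcal{H}(\psi, U, J)$ is immediate. The content is the reverse inclusion: every $\widetilde{h} \in \mathcal{H}(\psi, U, J)$ must be an affine function of $h$. Once this is established, the description of $\mathcal{H}^*(\psi, U, J) = \mathcal{H}(\psi, U, J) \setminus \mathbb{R}$ as $\{c_1 h + c_2 \mid c_1 \neq 0\}$ is automatic, since $c_1 h + c_2$ is constant precisely when $c_1 = 0$ (as $h$ is non-constant). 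The final sentence then follows because every such $c_1 h + c_2$ with $c_1 \neq 0$ is a strictly monotone reparametrization of $h$, hence is itself (the restriction of) an $\mathcal{F}(\theta, \psi)$-adaptive function by the construction in \textbf{Condition $(\heartsuit)$}.

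For the reverse inclusion, I would take $\widetilde{h} \in \mathcal{H}(\psi, U, J)$, so $\widetilde{h} = \widetilde{\chi} \circ \psi|_U$ for some $\widetilde{\chi} \colon J \to \mathbb{R}$, and I want to compare it with $h = \chi_U \circ \psi|_U$. The key is that $\chi_U$ is strictly increasing (hence a homeomorphism onto its image and locally invertible), so on $U$ one may write $\psi|_U = \chi_U^{-1} \circ h$ and thus $\widetilde{h} = (\widetilde{\chi} \circ \chi_U^{-1}) \circ h =: F \circ h$ for a function $F$ defined on $h(U) = \chi_U(J)$. The problem is now reduced to showing that $F$ is an affine function, i.e. $F(t) = c_1 t + c_2$. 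Both $h$ and $\widetilde{h} = F \circ h$ are pluriharmonic on $U$; I would exploit this by computing $\sqrt{-1}\ddbar(F \circ h)$ on the open dense locus $U \setminus \mathcal{F}_{\rm sing}$ where $dh \neq 0$. Using $\sqrt{-1}\ddbar(F \circ h) = (F' \circ h)\sqrt{-1}\ddbar h + (F'' \circ h)\sqrt{-1}\del h \wedge \delbar h$ and the pluriharmonicity $\sqrt{-1}\ddbar h = 0$, this collapses to $(F'' \circ h)\cdot \sqrt{-1}\del h \wedge \delbar h$. Since $\del h \not\equiv 0$ on the submersion locus, the form $\sqrt{-1}\del h \wedge \delbar h$ is nonzero there, so pluriharmonicity of $\widetilde{h}$ forces $F'' \circ h \equiv 0$, whence $F'' \equiv 0$ on $h(U)$ and $F$ is affine.

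The main obstacle will be the regularity of $F$: to write the chain rule above I need $F$ (equivalently $\widetilde{\chi}$ and $\chi_U$) to be twice differentiable, and a priori $\widetilde{\chi}$ is only whatever regularity is forced by $\widetilde{h} = \widetilde{\chi} \circ \psi$ being $C^\infty$. The point to be careful about is that on the regular locus $\psi$ is a submersion, so the factorization $\widetilde{h} = \widetilde{\chi} \circ \psi$ together with smoothness of $\widetilde{h}$ and $\psi$ does upgrade $\widetilde{\chi}$ to a smooth function on $(I_\psi)_{\rm reg} \supset J$ (one can read off the derivatives of $\widetilde{\chi}$ from directional derivatives of $\widetilde{h}$ transverse to the fibers, which are nonvanishing since $d\psi \neq 0$); the same applies to $\chi_U$, and strict monotonicity of $\chi_U$ then gives a smooth inverse on the image, so $F = \widetilde{\chi} \circ \chi_U^{-1}$ is genuinely $C^\infty$ and the computation is legitimate.

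A minor point worth flagging is connectivity: the conclusion $F'' \equiv 0$ gives that $F$ is affine on each component of its domain, but $h(U) = \chi_U(J)$ is an interval (the continuous image of the interval $J$ under the monotone $\chi_U$), so $F$ is affine throughout and the single pair of constants $(c_1, c_2)$ works on all of $U$. This disposes of any concern about piecing together locally-affine branches, and completes the identification $\mathcal{H}(\psi, U, J) = \{c_1 h + c_2 \mid c_1, c_2 \in \mathbb{R}\}$.
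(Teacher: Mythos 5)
Your proposal is correct and follows essentially the same route as the paper: write $\widetilde{h}=(\widetilde{\chi}\circ\chi_U^{-1})\circ h$ using the strict monotonicity of $\chi_U$, then use $\sqrt{-1}\ddbar h\equiv 0$ to reduce pluriharmonicity of $\widetilde{h}$ to $((\widetilde{\chi}\circ\chi_U^{-1})''\circ h)\cdot\sqrt{-1}\del h\wedge\delbar h\equiv 0$ and conclude that $\widetilde{\chi}\circ\chi_U^{-1}$ is affine. Your extra care about the smoothness of $\widetilde{\chi}\circ\chi_U^{-1}$ (via $\psi$ being a submersion on $U\subset U_\psi$, so that in fact $dh$ vanishes nowhere on $U$) and about the connectedness of $h(U)$ fills in details the paper leaves implicit, but does not change the argument.
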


\begin{proof}
Take a pluriharmonic functions $\widetilde{h}$ on $U$ such that $\widetilde{h}=\widetilde{\chi}\circ \psi|_U$ holds for some function $\widetilde{\chi}\colon J\to \mathbb{R}$, 
and (a restriction of) an $\mathcal{F}(\theta, \psi)$-adaptive function $h|_U=\chi_U\circ \psi|_U$ as in {\bf Condition $(\heartsuit)$}. 
As $\chi_U$ is strictly increasing, $\chi_U$ is bijective to the image. 
Therefore one has that $\widetilde{h}=\widetilde{\chi}\circ \chi_U^{-1}\circ h$ holds on $U$. 
As $\sqrt{-1}\ddbar h\equiv 0$ and $\sqrt{-1}\ddbar\widetilde{h}\equiv 0$, it holds that 
$((\widehat{\chi}\circ \chi_U^{-1})''\circ h)\cdot \sqrt{-1}\del h\wedge \delbar h\equiv 0$. 
Thus one has that $(\widetilde{\chi}\circ \chi_U^{-1})''\equiv 0$, from which it follows that $\widetilde{\chi}\circ \chi_U^{-1}$ is a polynomial of degree at most one. 
\end{proof}

\subsubsection{Comparison with another element of ${\rm PSH}^\infty(X, \theta)$}
Let $\theta$ be an element of ${\rm SP}(\alpha)$. 
In this subsection, we consider two elements $\vp$ and $\psi$ of ${\rm PSH}^\infty(X, \theta)$. 
\begin{lemma}\label{lem:comparison}
Assume that {\bf Condition $(\heartsuit)$} holds and that $U_\vp\cap U_\psi\not=\emptyset$. 
Let $U$ be a connected component of $U_\vp\cap U_\psi$ and $J$ be the image $\psi(U)$. 
Then the following holds: \\
$(i)$ $(\psi, U, J)$ is an element of $\mathcal{U}(\theta)$. \\
$(ii)$ $\vp|_U=\chi\circ \psi|_U$ holds for some strictly increasing or strictly decreasing function $\chi\colon J\to \mathbb{R}$. \\
$(iii)$ $(\vp, U, \chi(J))$ is an element of $\mathcal{U}(\theta)$. \\
$(iv)$ The foliation $\mathcal{F}(\theta, \psi)|_U$ coincides with $\mathcal{F}(\theta, \vp)|_U$. 
\end{lemma}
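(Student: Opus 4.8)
The plan is to avoid working directly on the component $U$ of $U_\varphi\cap U_\psi$ and instead to pass first to the connected component $U_0$ of $U_\psi$ that contains $U$, where Ehresmann's lemma applies and the situation is much cleaner. First I would record that $\psi|_{U_0}\colon U_0\to\psi(U_0)$ is a proper submersion onto an open subinterval of $(I_\psi)_{\mathrm{reg}}$: properness follows exactly as in the proof of Theorem \ref{thm:main_sec3} from the properness of $\psi|_{U_\psi}$ together with the fact that $U_0$ is clopen in $U_\psi$. Ehresmann's lemma then exhibits $U_0$ as a fibre bundle over the interval $\psi(U_0)$, and since $U_0$ is connected, each fibre $\psi^{-1}(r)\cap U_0$ is connected.

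Next I would establish $\varphi|_{U_0}=\chi_0\circ\psi|_{U_0}$ for a single-valued smooth function $\chi_0$ on $\psi(U_0)$. By Lemma \ref{lem:key_local} $(i)$, $\varphi$ is $\mathcal{F}(\theta,\psi)$-leafwise constant on $U_\psi$. Fixing $r\in\psi(U_0)$, the connected set $\psi^{-1}(r)\cap U_0$ lies in a single connected component $A$ of $\psi^{-1}(r)$, and $\varphi|_A$ is a smooth $\mathcal{F}(\theta,\psi)$-leafwise constant $\mathbb{R}$-valued function, hence constant by Condition $(\heartsuit)$. Thus $\varphi$ is constant on every fibre of $\psi|_{U_0}$, which, the fibres being connected, yields a well-defined $\chi_0$ with $\varphi=\chi_0\circ\psi$ on $U_0$; it is smooth because $\psi$ is a submersion. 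On $J=\psi(U)\subseteq\psi(U_0)$ one has $\chi_0'\neq0$, since every $r\in J$ is attained at a point of $U\subseteq U_\varphi$ where $d\varphi\neq0$. Hence $\chi:=\chi_0|_J$ is strictly monotone, which is assertion $(ii)$.

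For assertion $(i)$ I would match components. Let $U^\ast$ be the connected component of $\psi^{-1}(J)$ containing $U$. Since $\psi^{-1}(J)\subseteq U_\psi$ and $U_0$ is a component of $U_\psi$, the connected set $U^\ast$ lies in $U_0$, and in fact $U^\ast=\psi^{-1}(J)\cap U_0$, which is connected because $\psi|_{U_0}$ is a bundle over the interval $J$ with connected fibre. Now $\psi(U^\ast)=J$, so $\varphi(U^\ast)=\chi(J)=\varphi(U)\subseteq(I_\varphi)_{\mathrm{reg}}$, whence $U^\ast\subseteq U_\varphi$. Being connected, containing $U$, and contained in $U_\varphi\cap U_\psi$, it must equal $U$ because $U$ is a component of $U_\varphi\cap U_\psi$; thus $U=U^\ast$ is a connected component of $\psi^{-1}(J)$, proving $(i)$. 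Assertion $(iii)$ then follows by applying $(i)$ with the roles of $\varphi$ and $\psi$ interchanged, noting that $\chi(J)=\varphi(U)$ is an open interval as the image of the open interval $J$ under the strictly monotone continuous map $\chi$. Finally $(iv)$ is immediate: on $U$ we have $\partial\varphi=(\chi'\circ\psi)\,\partial\psi$ with $\chi'\neq0$, so $(\partial\varphi)^\perp=(\partial\psi)^\perp$, and since the tangent bundles of $\mathcal{F}(\theta,\psi)$ and $\mathcal{F}(\theta,\varphi)$ are these annihilators (see \S\ref{sec:3}), the two foliations coincide on $U$.

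The main obstacle is the possible mismatch between the component $U$ of $U_\varphi\cap U_\psi$ and the components of the individual level sets $\psi^{-1}(r)$: a priori $\varphi$ could take several values over one value of $\psi$, so that no single-valued $\chi$ exists. The device that removes this is to argue on the larger, better-behaved domain $U_0$, where Ehresmann's lemma forces connected fibres and Condition $(\heartsuit)$ then pins $\varphi$ down as a genuine function of $\psi$; the strict monotonicity of $\chi$ on $J$ is precisely what allows one to descend back and identify $U$ with a full connected component of $\psi^{-1}(J)$.
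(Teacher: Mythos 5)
Your proof is correct and follows essentially the same route as the paper's: the paper works directly on the connected component $W$ of $\psi^{-1}(J)$ containing $U$ (your $U^\ast$), obtains $\vp|_W=\chi\circ\psi|_W$ from Lemma \ref{lem:key_local} $(i)$ and {\bf Condition $(\heartsuit)$}, shows $\chi'\neq 0$ on $J$ exactly as you do, and concludes $W\subset U_\vp$, hence $W=U$. Your detour through the component $U_0$ of $U_\psi$ merely makes explicit the connectedness of the fibres (via properness and Ehresmann's lemma) that the paper uses implicitly when asserting that $\vp$ is constant on $W\cap\psi^{-1}(r)$, so no substantive difference remains.
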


\begin{proof}
Let $W$ be the connected component of $\psi^{-1}(J)$ which includes $U$. 
As $W\subset U_\psi$, it follows from Lemma \ref{lem:key_local} $(i)$ that $\vp$ is $\mathcal{F}(\theta, \psi)|_W$-leafwise constant. 
By {\bf Condition $(\heartsuit)$}, it holds that $\vp|_{W\cap\psi^{-1}(r)}$ is constant for each $r\in J$, from which it follows that there exists a function $\chi\colon J\to \mathbb{R}$ such that $\vp|_W=\chi\circ \psi|_W$ holds. 

First, let us show that $\chi'(r)\not=0$ for each $r\in J$. 
Take $r\in J$. As $J$ is the image of $U$ by $\psi$, there exists a point $x\in U$ 
such that $\psi(x)=r$. As $x\in U_\vp\cap U_\psi$, one has that $(d\vp)_x\not=0$ and $(d\psi)_x\not=0$. 
Thus it follows from the equation $(d\vp)_x=\chi'(r)\cdot (d\psi)_x$ that $\chi'(r)\not=0$. 
Therefore one has that $\chi$ is strictly increasing or strictly decreasing. 

Again by the equation $d\vp=(\chi'\circ \psi)\cdot d\psi$ on $W$, it follows that $W\subset U_\vp$, from which one has that $U=W$. 
The assertions $(i)$, $(ii)$, and $(iii)$ follows from these observation. 
The assertion $(iv)$ follows from the fact that $T_{\mathcal{F}(\theta, \psi)}=(\del \psi)^\perp$ and $T_{\mathcal{F}(\theta, \vp)}=(\del \vp)^\perp$ holds on $U$ (by Lemma \ref{lem:key_pointwise} $(i)$) and the equation $d\vp=(\chi'\circ \psi)\cdot d\psi$ on $U$. 
\end{proof}

\subsubsection{Observation on foliation adaptive functions}
Let $\theta$ be an element of ${\rm SP}(\alpha)$. 
In this subsection, we investigate foliation adaptive functions under the assumption that {\bf Condition $(\heartsuit)$} holds. 
As a preliminary, first we show the following:
\begin{lemma}\label{lem:4.3}
When {\bf Condition $(\heartsuit)$} holds, the following holds for $(\psi, U, I)\in \mathcal{U}(\theta)$, $h\in \mathcal{H}^*(\psi, U, I)$, and $J:=h(U)$: \\
$(i)$ $\theta|_U=(\rho\circ h)\cdot \sqrt{-1}\del h\wedge \delbar h$ holds for some function $\rho\colon J\to \mathbb{R}_{\geq 0}$. \\
$(ii)$ For any $\vp\in {\rm PSH}^\infty(X, \theta)$, there exists a function $\chi\colon J\to \mathbb{R}$ such that $\chi''\geq -\rho$ and $\vp|_U=\chi\circ h$ hold, where $\rho$ is the function as in the assertion $(i)$. \\
$(iii)$ For any $\vp\in {\rm PSH}^\infty(X, \theta)$ with $U\subset U_\vp$, $(\vp, U, \vp(U))\in\mathcal{U}(\theta)$ and $\mathcal{H}(\vp, U, \vp(U))=\mathcal{H}(\psi, U, I)$ hold. 
\end{lemma}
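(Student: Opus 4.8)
The plan is to reduce all three assertions to a single \emph{factoring principle}: under Condition~$(\heartsuit)$, every $\mathcal{F}(\theta, \psi)$-leafwise constant function of class $C^\infty$ on $U$ factors as a function of $\psi$, equivalently of $h$. To set this up I would first note that, since $h \in \mathcal{H}^*(\psi, U, I)$, Lemma~\ref{lem:S42first} shows $h$ is the restriction of an $\mathcal{F}(\theta, \psi)$-adaptive function, so $h|_U = \chi_0 \circ \psi|_U$ for a strictly monotone $\chi_0 \colon I \to J$ and $h|_U$ is a proper submersion onto $J$. Ehresmann's lemma then exhibits $U$ as a fibre bundle over the interval $J$; as $U$ is connected, each fibre $U \cap \psi^{-1}(r)$ is connected and $\psi(U) = I$. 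Any leafwise constant $C^\infty$ function, being constant on each connected level set by Condition~$(\heartsuit)$, therefore descends to a function of $\psi$, which is of class $C^\infty$ in the variable $h$ because $h$ is a submersion.

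For $(i)$ I would begin from Lemma~\ref{lem:key_local}$(ii)$, giving $\theta|_U = f_{\theta, \psi} \cdot \sqrt{-1}\,\del\psi \wedge \delbar\psi$ with $f_{\theta, \psi}$ leafwise constant and smooth; the factoring principle yields $f_{\theta, \psi} = \tilde{f} \circ \psi$. Using $\sqrt{-1}\,\del h \wedge \delbar h = (\chi_0' \circ \psi)^2\, \sqrt{-1}\,\del\psi \wedge \delbar\psi$ (with $\chi_0' \neq 0$ since $h$ is submersive) and eliminating $\psi$ via $\psi = \chi_0^{-1} \circ h$ produces the desired $\rho \colon J \to \mathbb{R}_{\geq 0}$. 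For $(ii)$, Lemma~\ref{lem:key_local}$(i)$ makes $\vp|_U$ leafwise constant, so the principle gives $\vp|_U = \chi \circ h$ with $\chi$ of class $C^\infty$; since $h$ is pluriharmonic, $\sqrt{-1}\,\ddbar(\chi \circ h) = (\chi'' \circ h)\, \sqrt{-1}\,\del h \wedge \delbar h$, whence by $(i)$ one has $\theta|_U + \sqrt{-1}\,\ddbar\vp = ((\rho + \chi'') \circ h)\, \sqrt{-1}\,\del h \wedge \delbar h$. As $\sqrt{-1}\,\del h \wedge \delbar h$ is a nonzero semi-positive $(1,1)$-form on $U$, semi-positivity of the left-hand side forces $\chi'' \geq -\rho$.

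For $(iii)$, the hypothesis $U \subset U_\vp$ means $d\vp \neq 0$ on all of $U$, so the function $\chi$ of $(ii)$ satisfies $\chi' \neq 0$ everywhere and is strictly monotone; hence $\vp(U) = \chi(J)$ is an open interval in $(I_\vp)_{\rm reg}$, and $\vp$, $\psi$, $h$ induce the same (connected) level sets on $U$. To promote the inclusion ``$U \subset$ a connected component $V$ of $\vp^{-1}(\vp(U))$'' to the equality $U = V$, I would apply the factoring principle with the roles of $\vp$ and $\psi$ interchanged on $V$ (equivalently, invoke Lemma~\ref{lem:comparison} on the component of $U_\vp \cap U_\psi$ containing $U$): this shows $\psi|_V$ factors through $\vp$ with image $I$, so $V \subset \psi^{-1}(I)$ and hence $V = U$, giving $(\vp, U, \vp(U)) \in \mathcal{U}(\theta)$. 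Finally, because factoring through $\vp$ and through $\psi$ are equivalent on $U$ (they are strictly monotone reparametrisations of one another), the spaces $\mathcal{H}(\vp, U, \vp(U))$ and $\mathcal{H}(\psi, U, I)$ coincide.

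I expect the main obstacle to be the \emph{connectedness of the level sets} underlying the factoring principle: one must verify carefully that $h|_U$ is a proper submersion onto the open interval $J$ so that Ehresmann's lemma applies and a single connected fibre results, and in $(iii)$ one must ensure that $U$ is exactly, not merely up to inclusion, a connected component of $\vp^{-1}(\vp(U))$, which is precisely where the symmetric use of the factoring principle (or Lemma~\ref{lem:comparison}) is decisive.
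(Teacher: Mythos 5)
Your proposal is correct and follows essentially the same route as the paper: assertions $(i)$ and $(ii)$ are obtained from Lemma \ref{lem:key_local} together with {\bf Condition $(\heartsuit)$} and Lemma \ref{lem:S42first} (your ``factoring principle'' via properness and Ehresmann's lemma is exactly the mechanism the paper relies on), and $(iii)$ reduces to Lemma \ref{lem:comparison} $(iii)$. The only cosmetic difference is the last step of $(iii)$: you deduce $\mathcal{H}(\vp, U, \vp(U))=\mathcal{H}(\psi, U, I)$ directly from the fact that $\vp$ and $\psi$ are strictly monotone reparametrisations of one another on $U$, whereas the paper writes an element $\widehat{h}=(F\circ \chi)\circ h$ and uses pluriharmonicity to conclude that $F\circ\chi$ is affine; both arguments are valid.
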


\begin{proof}
The assertion $(i)$ follows from Lemma \ref{lem:key_local} $(ii)$, {\bf Condition $(\heartsuit)$}, and Lemma \ref{lem:S42first}. 
The assertion $(ii)$ follows from Lemma \ref{lem:key_local} $(i)$, {\bf Condition $(\heartsuit)$}, and Lemma \ref{lem:S42first}. 
Let us show the assertion $(iii)$. 
From Lemma \ref{lem:comparison} $(iii)$, it follows that $(\vp, U, \vp(U))\in\mathcal{U}(\theta)$. 
Take an element $\widehat{h}\in \mathcal{H}(\vp, U, \vp(U))$. 
By Lemma \ref{lem:S42first}, there exists a function $F\colon \vp(U)\to \mathbb{R}$ such that $\widehat{h}=F\circ \vp|_U$. 
Thus one has $\widehat{h}=(F\circ \chi)\circ h$, where $\chi$ is the function as in the assertion $(ii)$. As both $\widehat{h}$ and $h$ is pluriharmonic and 
$\sqrt{-1}\ddbar (F\circ \chi)\circ h=((F\circ \chi)''\circ h)\cdot \sqrt{-1}\del h\wedge \delbar h$, one has that $F\circ \chi$ is a polynomial of at most degree one, from which the assertion follows. 
\end{proof}

In what follows, we often consider the following configuration: 
\begin{description}
\item[Configuration $(\natural)$] A domain $W$ of a connected compact K\"ahler manifold $X$ satisfies the following conditions: The boundary $\del W$ consists of two connected components $H_+$ and $H_-$, and there exist open neighborhoods $W_\pm$ of $H_\pm$ in $X$ and $\vp_\pm\in{\rm PSH}^\infty(X, \theta)$ for some $\theta\in {\rm SP}(\alpha)$ such that $U_\pm:=W_\pm\cap W$ and $(a_\pm, b_\pm):=\vp_\pm(U_\pm)$ satisfies 
$\overline{U_+}\cap \overline{U_-}=\emptyset$, 
$(\vp_\pm, U_\pm, (a_\pm, b_\pm))\in \mathcal{U}_c(\theta)$, 
$H_-=\vp_-^{-1}(a_-)\cap \overline{U_-}$, and 
$H_+=\vp_+^{-1}(b_+)\cap \overline{U_+}$. \qed
\end{description}

For such $X, W, H_\pm$, and $\vp_\pm\colon U_\pm\to (a_\pm, b_\pm)$ as in {\bf Configuration $(\natural)$}, we have the following: 
\begin{lemma}\label{lem:natural}
Assume that {\bf Condition $(\heartsuit)$} holds. 
Let $X, W, H_\pm$, and $\vp_\pm\colon U_\pm\to (a_\pm, b_\pm)$ be as in {\bf Configuration $(\natural)$}. 
Then either $(i)$ or $(ii)$ holds for $V:=X\setminus \overline{W}$. \\
$(i)$ $V$ is connected and there exists a continuous function $h_V\colon \overline{U_+\cup V\cup U_-}\to \mathbb{R}$ which is a non-constant pluriharmonic function on the interior of $\overline{U_+\cup V\cup U_-}$ such that $h_V|_{U_\pm}\in \mathcal{H}^*(\vp_\pm, U_\pm, (a_\pm, b_\pm))$. \\
$(ii)$ $V$ consists of two connected components $V^+$ and $V^-$ such that $\del V^-=H_-$ and $\del V^+=H_+$, and there exists a continuous function $h_W\colon \overline{W}\to \mathbb{R}$ such that $h_W|_W$ is a non-constant pluriharmonic function and that $h_W|_{U_\pm}\in \mathcal{H}^*(\vp_\pm, U_\pm, (a_\pm, b_\pm))$. 
\end{lemma}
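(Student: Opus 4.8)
The plan is to separate the statement into two layers: a purely topological dichotomy for $V=X\setminus\overline{W}$, and an analytic construction of the desired pluriharmonic function on the region that joins the two collars $U_\pm$. The two layers are then matched through the geometry of the bounding hypersurfaces $H_\pm$, which I will treat as regular level sets.

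First I would settle the topology, and this already produces the two alternatives. Since $(\vp_\pm,U_\pm,(a_\pm,b_\pm))\in\mathcal{U}_c(\theta)$, the endpoints $a_\pm,b_\pm$ are regular values of $\vp_\pm$, so $H_-=\vp_-^{-1}(a_-)\cap\overline{U_-}$ and $H_+=\vp_+^{-1}(b_+)\cap\overline{U_+}$ are smooth, connected, two-sided hypersurfaces: near $H_+$ one has $\vp_+<b_+$ on the side $U_+\subset W$ and $\vp_+>b_+$ on the opposite side, which lies in $V$, and symmetrically for $H_-$. Hence each of the connected hypersurfaces $H_+,H_-$ borders exactly one connected component of $V$. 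A component of $V$ bordering neither would have empty boundary, hence be clopen in the connected manifold $X$ and thus all of $X$, contradicting $W\neq\emptyset$; so every component of $V$ borders $H_+$ or $H_-$, and there are at most two. This gives precisely the dichotomy: either a single component borders both (the topology of $(i)$, $V$ connected), or there are exactly two components $V^+,V^-$ with $\del V^+=H_+$ and $\del V^-=H_-$ (the topology of $(ii)$), where $\del V^+\subset H_+\cup H_-$ together with $V^+\neq V^-$ forces $\del V^+=H_+$ and similarly $\del V^-=H_-$.

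It remains to build the pluriharmonic function on the region joining the two collars — on $U_+\cup V\cup U_-$ when $V$ is connected, and on $W$ when $V$ splits — and I would do this by analytic continuation of the adaptive functions supplied by \textbf{Condition $(\heartsuit)$}. Fix $h_\pm\in\mathcal{H}^*(\vp_\pm,U_\pm,(a_\pm,b_\pm))$; by Lemma \ref{lem:S42first} each is determined up to a real affine change. Covering the relevant region by connected pieces on each of which some $\vp\in\mathrm{PSH}^\infty(X,\theta)$ is submersive, \textbf{Condition $(\heartsuit)$} provides a local adaptive pluriharmonic function on each piece, and on overlaps Lemmata \ref{lem:comparison} and \ref{lem:4.3}$(iii)$ force any two of them to define the same foliation and to agree up to an element of $\mathrm{Aff}(\mathbb{R})$. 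This assembles into a transverse coordinate well defined up to the holonomy of a transverse affine structure. To promote it to an honest $\mathbb{R}$-valued function I would use Lemma \ref{lem:4.3}$(i)$, which writes $\theta=(\rho\circ h)\cdot\sqrt{-1}\del h\wedge\delbar h$ with $\rho$ depending on the transverse value alone, and transport the normalization by solving the same kind of ordinary differential equation as in the proof of Theorem \ref{thm:main_sec3}$(b)$; the resulting non-constant pluriharmonic function then restricts on $U_\pm$ to an element of $\mathcal{H}^*(\vp_\pm,U_\pm,(a_\pm,b_\pm))$ by construction, and it is pluriharmonic across $H_\pm$ because it is produced as a single continuation through those regular level sets.

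The hard part will be the single-valuedness, that is, the vanishing of the affine holonomy, together with the extension of the function across the singular locus $K_\alpha$ lying inside the region. For the holonomy I would invoke the Lemma of \S\ref{sec:fund_obs_F-adaptive}: any such pluriharmonic function is proper onto a bounded interval with values tending to the extrema exactly on the bounding hypersurfaces, so properness, the discreteness of the critical values from \cite{SS}, and the connectedness of $X$ pin the multiplicative part of the holonomy to $1$ (a genuine rescaling around a loop would violate the prescribed boundary extrema) and the additive part to $0$. Extension across $K_\alpha$ I would handle by the leafwise-constancy argument of Lemma \ref{lem:fol_sing_lwconst} together with removability of bounded pluriharmonic singularities along the thin set $K_\alpha$. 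Once single-valuedness is in hand, the outcome matches the topology of the first step: when $V$ is connected the continuation closes up around the outside and yields $h_V$ on $U_+\cup V\cup U_-$ as in $(i)$, whereas when $V$ splits the only route joining the two collars passes through the core of $W$, yielding $h_W$ on $W$ as in $(ii)$.
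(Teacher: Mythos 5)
Your topological step is fine and matches the paper's (which gets the same dichotomy from Ehresmann plus Mayer--Vietoris). The analytic step, however, has a genuine gap, and it sits exactly where the real content of the lemma is. You propose to cover $V$ (resp.\ the core of $W$) by pieces on which some $\vp\in{\rm PSH}^\infty(X,\theta)$ is submersive, continue the local adaptive functions as a transversely affine structure, kill the holonomy, and then extend across $K_\alpha$ by removability of singularities along a ``thin set.'' None of these three moves is available. First, there is no reason the region can be covered by such pieces: points of $K_\alpha$ are precisely points where \emph{every} $\vp$ has vanishing differential, and $K_\alpha$ can meet $V$ or the core of $W$ (this is the whole point of Case I\!I\!I later). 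Second, $K_\alpha$ is not known to be thin or pluripolar --- the paper explicitly poses the pluripolarity of the set $M^-$ as an open question in \S 7 --- so there is no removability theorem to invoke; indeed, in the disconnected case the function $h_W$ exists on $W$ but does \emph{not} continue into $V^\pm$, which is exactly a failure of the extension you are assuming. Third, the holonomy argument is circular: the properness and boundary behaviour from \S\ref{sec:fund_obs_F-adaptive} are properties of an already globally defined function, and cannot be used to show that a multivalued affine continuation is single-valued.

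The paper's proof sidesteps all of this by never continuing anything. Using Lemma \ref{lem:4.5} (hence Lemma \ref{lem:hodge_index_thm}), it manufactures two representatives $\theta_\pm\in{\rm SP}(\alpha)$ of the form $(\rho_\pm\circ h_\pm)\cdot\sqrt{-1}\del h_\pm\wedge\delbar h_\pm$ supported inside the collars $\widetilde{U}_\pm$, and then applies the $\ddbar$-lemma to get a single global $C^\infty$ function $f$ with $\theta_+=\theta_-+\sqrt{-1}\ddbar f$. This $f$ is automatically single-valued and pluriharmonic on the complement of ${\rm Supp}\,\theta_+\cup{\rm Supp}\,\theta_-$, which contains both $V$ and $W_0:=W\setminus\overline{U_+\cup U_-}$, regardless of where $K_\alpha$ sits. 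On the collars $f=\chi_\pm\circ h_\pm$ with $\chi_\pm''=\pm\rho_\pm$, and the dichotomy $(i)$/$(ii)$ falls out of whether $\chi_-$ is constant on the outward side of the collar: if not, $f|_V$ is non-constant and the maximum principle forces $V$ to be connected (giving $h_V$); if so, $\chi_-$ is forced to be strictly monotone on the inward side, $f|_{W_0}$ is non-constant (giving $h_W$), and a combination of the identity theorem and the maximum principle shows $V$ cannot be connected. If you want to salvage your approach you would need, at minimum, a substitute for the global potential $f$; as written, the continuation-plus-removability strategy fails at the first point of $K_\alpha$ it meets.
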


\begin{proof}
Denote by $\widetilde{V}$ the interior of $\overline{U_+\cup V\cup U_-}$. 
First let us note that it follows by considering 
Ehresmann's lemma for $\vp_\pm|_{U_\pm}$ and 
Mayer--Vietoris sequence for the covering $\{\widetilde{V}, W\}$ of $X$ that either of the following holds: 
$V$ is connected, or $V$ consists of two connected components $V^+$ and $V^-$ such that $\del V^-=H_-$ and $\del V^+=H_+$. 

Let $\widetilde{U_\pm}$ be the connected component of $\vp_\pm^{-1}((I_{\vp_\pm})_{\rm reg})$ which includes $U_\pm$. 
Denote by $(c_\pm, d_\pm)$ the interval $\vp_\pm(\widetilde{U_\pm})$. 
Note that $c_\pm<a_\pm<b_\pm<d_\pm$ holds, since $(\vp_\pm, U_\pm, (a_\pm, b_\pm))\in \mathcal{U}_c(\theta)$. 
Take an element $h_\pm\in \mathcal{H}^*(\vp_\pm, \widetilde{U}_\pm, (c_\pm, d_\pm))$ as in {\bf Condition $(\heartsuit)$}. 
Set $a_\pm':=h_\pm(\vp_\pm^{-1}(a_\pm)\cap \widetilde{U}_\pm)$,  $b_\pm':=h_\pm(\vp_\pm^{-1}(b_\pm)\cap \widetilde{U}_\pm)$, 
$c_\pm':=\inf h_\pm$, and 
$d_\pm':=\sup h_\pm$. 
Then, for a function $\rho_\pm\colon (c_\pm', d_\pm')\to \mathbb{R}_{\geq 0}$ of class $C^\infty$ such that ${\rm Supp}\,\rho_\pm=[a_\pm', b_\pm']$, 
it follows from Lemma \ref{lem:4.5} below that 
\[
\theta_\pm := \begin{cases}
(\rho_\pm\circ h_\pm)\cdot \sqrt{-1}\del h_\pm\wedge \delbar h_\pm & \text{on}\ \widetilde{U_\pm}\\
0 &  \text{on}\ X\setminus \widetilde{U_\pm}
\end{cases}
\]
is an element of ${\rm SP}(\alpha)$ by replacing $\rho_\pm$ with $A_\pm\cdot \rho_\pm$ for some $A_\pm>0$ if necessary. 
Take a function $f\colon X\to \mathbb{R}$ such that $\theta_+=\theta_-+\sqrt{-1}\ddbar f$, whose existence is assured by the $\ddbar$-lemma. 
Note that $\mp f\in {\rm PSH}^\infty(X, \theta_\pm)$. 
It follows from Lemma \ref{lem:4.3} $(ii)$ that $f|_{\widetilde{U_\pm}}=\chi_\pm\circ h_\pm$ for some function $\chi_\pm\colon (c_\pm', d_\pm')\to \mathbb{R}$. 
As $\theta_\pm|_{U_\mp}\equiv 0$ by construction, 
one can deduce from the equation $\sqrt{-1}\ddbar f|_{\widetilde{U_\pm}}=\sqrt{-1}\ddbar (\chi_\pm\circ h_\pm)$ that $\chi_\pm''=\pm\rho_\pm$ holds on a neighborhood of $[a_\pm', b_\pm']$. 

First, let us consider the case where $\chi_-|_{(a_-'-\ve, a_-')}$ is not a constant function for any small positive constant $\ve$. 
In this case, consider the function $h_V:=f|_V$. 
As $\theta_+=\theta_-+\sqrt{-1}\ddbar f$ and $V\cap {\rm Supp}\,\theta_\pm=\emptyset$, $h_V$ is a pluriharmonic function on $V$. 
By the assumption on $\chi_-|_{(a_-'-\ve, a_-')}$, $h_V$ is non-constant. 
Therefore $V$ is connected, since otherwise the contradiction occurs by considering the maximum principle for $h_V|_{V^-}$ (Note that, as $f|_{\widetilde{U_-}}=\chi_-\circ h_-$, $f|_{\del V^-}=f|_{H_-}$ is constant). 
As $\chi_\pm''\equiv 0$ on the complement of $[a_\pm', b_\pm']$, one can easily extend $h_V$ by using elements of $\mathcal{H}(\vp_\pm, U_\pm, (a_\pm, b_\pm))$ to construct a non-constant pluriharmonic function on $\widetilde{V}$ (by Lemma \ref{lem:S42first}), from which one has that the assertion $(i)$ holds in this case. 

Next, let us consider the case where $\chi_-|_{(a_-'-\ve, a_-')}\equiv A$ holds for a small positive constant $\ve$ and a constant $A\in \mathbb{R}$. 
Note that $\chi_-$ is a strictly decreasing function on $(b_-', b_-'+\ve)$ in this case, since $\chi_-''=-\rho_-$ on $[a_-', b_-']$ and $\chi_-'|_{(a_-'-\ve, a_-')}\equiv 0$. 
Let us consider the function $h_W:=f|_{W_0}$, where $W_0:=W\setminus \overline{U_+\cup U_-}$. 
As $\theta_+=\theta_-+\sqrt{-1}\ddbar f$ and $W_0\cap {\rm Supp}\,\theta_\pm=\emptyset$, $h_W$ is a pluriharmonic function on $W_0$. 
The non-constantness of $\chi_-|_{(b_-', b_-'+\ve)}$ implies that $h_W$ is non-constant. As $\chi_\pm''\equiv 0$ on the complement of $[a_\pm', b_\pm']$, one can easily extend $h_W$ by using some elements of $\mathcal{H}(\vp_\pm, U_\pm, (a_\pm, b_\pm))$ to construct a non-constant pluriharmonic function on $W$. 
Finally, let us show that $V$ is not connected. 
Assume that $V$ is connected. 
Then, as $f|_V$ is pluriharmonic and $f\equiv A$ on $\{x\in \widetilde{U}_-\mid a_-'-\ve < h_-(x) < a_-'\} (\subset V)$, it follows from the identity theorem that $f|_V\equiv A$. Thus $\chi'\equiv 0$ on $(b_+', b_+'+\delta)$ for a small positive number $\delta$, from which it follows that $\chi_+$ is a strictly decreasing function on $(a_+'-\delta, a_+')$ for small $\delta>0$, since $\chi_+''=\rho_+$ on $[a_+', b_+']$ and $\chi_+'|_{(b_+', b_+'+\delta)}\equiv 0$. 
Therefore, from these observation on $\chi_\pm$ and the maximum principle for the pluriharmonic function $f|_{W_0}$, one has that the maximum vale of $f|_{\overline{W}}$ is attained along $H_-$ and the minimum value of $f|_{\overline{W}}$ is attained along $H_+$, which contradicts to the maximum principle for the non-constant plurihamornic function $f|_{W_0}$ since $f|_{H_\pm}\equiv A$. 
\end{proof}

\begin{lemma}\label{lem:4.5}
Let 
$\theta$ be an element of ${\rm SP}(\alpha)$, 
$(\vp, U, (c,d))$ an element of $\mathcal{U}(\theta)$, 
and $h$ be an element of $\mathcal{H}^*(\vp, U, (c, d))$. 
For a function $\rho\colon (c, d)\to \mathbb{R}_{\geq 0}$ of class $C^\infty$ whose support is relatively compact in $(c, d)$, there exists a positive constant $A$ such that $A\cdot \theta_\rho\in {\rm SP}(\alpha)$, where 
\[
\theta_\rho :=\begin{cases}
(\rho\circ h)\cdot \sqrt{-1}\del h\wedge \delbar h & \text{on}\ U\\
0 & \text{on}\ X\setminus U
\end{cases}
\]
\end{lemma}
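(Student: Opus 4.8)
The plan is to produce the form $\widehat{\theta}:=A\cdot\theta_\rho$ for a suitable constant $A>0$ and to verify that it satisfies the hypotheses of Lemma \ref{lem:hodge_index_thm}, which then immediately gives $\widehat{\theta}\in{\rm SP}(\alpha)$. Concretely I must check four things: that $\theta_\rho$ is a globally well-defined $C^\infty$ $(1,1)$-form on $X$; that it is $d$-closed and semi-positive; that the classes $\{\theta_\rho\wedge\theta\}$ and $\{\theta_\rho\wedge\theta_\rho\}$ both vanish in $H^{2,2}(X,\mathbb{C})$; and that one can choose $A>0$ so that $\int_X A\theta_\rho\wedge\omega^{n-1}=\int_X\theta\wedge\omega^{n-1}$ for a K\"ahler form $\omega$.

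First I would verify that $\theta_\rho$ is smooth on all of $X$. Since $h=\chi\circ\vp|_U$ with $\chi$ strictly monotone (Lemma \ref{lem:S42first}) and $\vp|_U\colon U\to(c,d)$ proper, the composite $h|_U$ is proper onto its image interval; as ${\rm Supp}\,\rho$ is relatively compact there, the coefficient $\rho\circ h$ vanishes on a neighborhood of $\del U$ in $\overline{U}$. Hence $\theta_\rho$ is supported in a compact subset of $U$ and extends by zero to a $C^\infty$ form on $X$. Semi-positivity is then immediate from $\rho\geq 0$ together with $\sqrt{-1}\del h\wedge\delbar h=\sqrt{-1}\del h\wedge\overline{\del h}\geq 0$.

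Next I would establish $d$-closedness and the two pointwise vanishings. On a small ball where the pluriharmonic function $h$ equals ${\rm Re}\,F$ for a holomorphic $F$ (so $2\del h=dF$), one has $\theta_\rho=\tfrac{\sqrt{-1}}{4}(\rho\circ h)\,dF\wedge d\overline{F}$; differentiating and using $dF\wedge dF=d\overline{F}\wedge d\overline{F}=0$ and $d(dF\wedge d\overline{F})=0$ yields $d\theta_\rho=0$. For the wedge products, Lemma \ref{lem:4.3} $(i)$ gives $\theta|_U=(\rho_0\circ h)\cdot\sqrt{-1}\del h\wedge\delbar h$ for some $\rho_0\geq 0$, so both $\theta_\rho\wedge\theta$ and $\theta_\rho\wedge\theta_\rho$ are pointwise multiples of $(\sqrt{-1}\del h\wedge\delbar h)^{\wedge 2}$, which vanishes since $\del h\wedge\del h=0$. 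Thus these forms are identically zero, and a fortiori their classes vanish in $H^{2,2}(X,\mathbb{C})$.

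Finally I would fix the normalizing constant. Both $\int_X\theta\wedge\omega^{n-1}$ and $\int_X\theta_\rho\wedge\omega^{n-1}$ are strictly positive: the former because $\theta\geq 0$ represents $\alpha\neq 0$ (as ${\rm nd}(\alpha)=1$), the latter because $\rho\not\equiv 0$ while $\sqrt{-1}\del h\wedge\delbar h$ has positive $\omega$-trace wherever $dh\neq 0$, which holds throughout $U$ since $d\vp$ does not vanish on $U\subset U_\vp$ and $\chi'\neq 0$. Setting $A:=\big(\int_X\theta\wedge\omega^{n-1}\big)\big/\big(\int_X\theta_\rho\wedge\omega^{n-1}\big)>0$ arranges the integral condition, and Lemma \ref{lem:hodge_index_thm} then gives $A\theta_\rho\in{\rm SP}(\alpha)$. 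The only genuinely delicate point is the global smoothness in the second step: one must keep precise track of the interval on which $\rho$ is defined and confirm via properness that $\theta_\rho$ decays to zero before reaching $\del U$; the remaining steps are short linear-algebraic and cohomological verifications that reduce to $\del h\wedge\del h=0$ and the proportionality in Lemma \ref{lem:4.3} $(i)$.
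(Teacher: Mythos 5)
Your proof is correct and follows essentially the same route as the paper: both verify that $\theta_\rho$ is $d$-closed, semi-positive, and satisfies $\theta_\rho\wedge\theta_\rho\equiv 0$ and $\theta\wedge\theta_\rho\equiv 0$ (the latter via Lemma \ref{lem:4.3} $(i)$), then apply Lemma \ref{lem:hodge_index_thm} with the same normalizing constant $A$. You simply spell out more of the routine verifications (global smoothness via properness, $d$-closedness via local potentials, positivity of the normalizing integrals) that the paper leaves implicit.
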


\begin{proof}
Note that $d\theta_\rho\equiv 0$ and $\theta_\rho\wedge \theta_\rho\equiv 0$ by definition. 
As it follows from Lemma \ref{lem:4.3} $(i)$ that $\theta\wedge \theta_\rho\equiv 0$, the lemma follows from Lemma \ref{lem:hodge_index_thm} by letting 
\[
A:=\frac{\int_X\theta\wedge \omega^{{\rm dim}\,X-1}}{\int_X\theta_\rho\wedge \omega^{{\rm dim}\,X-1}}
\]
for a suitable K\"ahler form $\omega$ of $X$. 
\end{proof}

Moreover, we have the following: 
\begin{lemma}\label{lem:natural2}
Assume that {\bf Condition $(\heartsuit)$} holds. 
Let $X, W, H_\pm$, and $\vp_\pm\colon U_\pm\to (a_\pm, b_\pm)$ be as in {\bf Configuration $(\natural)$}. \\
$(i)$ Assume that the assertion $(i)$ of Lemma \ref{lem:natural} holds. 
Let $\mathcal{F}_V$ be the foliation on $V:=X\setminus \overline{W}$ which is defined by $T_{\mathcal{F}_V}=(\del h_V|_V)^\perp$. 
Then $h_V$ is $\mathcal{F}_V$-adaptive and $K_\alpha\cap V=\{x\in V\mid (dh_V)_x=0\}$. 
In this case, any element of ${\rm SP}(\alpha)$ is zero along each leaf of $\mathcal{F}_V|_{V\setminus K_\alpha}$. \\
$(ii)$ Assume that the assertion $(ii)$ of Lemma \ref{lem:natural} holds. 
Let $\mathcal{F}_W$ be the foliation on $W$ which is defined by $T_{\mathcal{F}_W}=(\del h_W|_W)^\perp$. 
Then $h_W$ is $\mathcal{F}_W$-adaptive and $K_\alpha\cap W=\{x\in W\mid (dh_W)_x=0\}$. 
In this case, any element of ${\rm SP}(\alpha)$ is zero along each leaf of $\mathcal{F}_W|_{W\setminus K_\alpha}$. 
\end{lemma}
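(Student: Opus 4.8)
The plan is to prove $(i)$ in full and note that $(ii)$ follows verbatim after replacing $(V,h_V,\mathcal{F}_V)$ by $(W,h_W,\mathcal{F}_W)$. First I would recall from the proof of Lemma \ref{lem:natural} that $h_V=f|_V$, where $f$ is the function with $\theta_+=\theta_-+\sqrt{-1}\ddbar f$; the forms $\theta_\pm$ constructed there are supported in $\overline{W}$, so $\theta_\pm|_V\equiv 0$, and $\mp f\in{\rm PSH}^\infty(X,\theta_\pm)$. In particular $f\in{\rm PSH}^\infty(X,\theta_-)$ with $\theta_-\in{\rm SP}(\alpha)$, and $h_V$ is pluriharmonic on $V$. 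I will dispose of the $\mathcal{F}_V$-adaptivity first, as it is the easiest point: $h_V$ is $\mathcal{F}_V$-leafwise constant by the very definition of $\mathcal{F}_V$ (its leaves are the local level sets of a holomorphic $F$ with ${\rm Re}\,F=h_V$), extended across $(\mathcal{F}_V)_{\rm sing}$ by Lemma \ref{lem:fol_sing_lwconst}; it is non-constant pluriharmonic by Lemma \ref{lem:natural}; and since $h_V|_{U_\pm}=\chi_\pm\circ\vp_\pm$ is constant along $H_-=\vp_-^{-1}(a_-)\cap\overline{U_-}$ and $H_+=\vp_+^{-1}(b_+)\cap\overline{U_+}$ with two distinct values, the strong maximum principle on the compact set $\overline{V}$ will force $h_V^{-1}(\{\textstyle\max_{\overline V}h_V,\textstyle\min_{\overline V}h_V\})=H_+\cup H_-=\del V$.

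For the set equality $K_\alpha\cap V=\{x\in V\mid (dh_V)_x=0\}=:S$, the inclusion $\subset$ is immediate from Lemma \ref{lem:K_alpha}: taking $\theta=\theta_-$ gives $K_\alpha\subset\{df=0\}$, and $\{df=0\}\cap V=S$. For the reverse inclusion I fix $\theta_-$ and must show, for every $\vp\in{\rm PSH}^\infty(X,\theta_-)$, that $(d\vp)_{x_0}=0$ at each $x_0\in S$ (this suffices by Lemma \ref{lem:K_alpha}). Comparing $\vp$ with $f$ by Lemma \ref{lem:comparison} (legitimate since {\bf Condition $(\heartsuit)$} holds and, on $U_f\cap V$, one has $T_{\mathcal{F}(\theta_-,f)}=(\del f)^\perp=(\del h_V)^\perp=T_{\mathcal{F}_V}$) shows that on $V\setminus S$ one has $\vp=\chi\circ h_V$ locally, hence $d\vp=(\chi'\circ h_V)\,dh_V$ and $\vp$ is $\mathcal{F}_V$-leafwise constant there. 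By Lemma \ref{lem:fol_sing_lwconst} this persists across $S$, and moreover $\vp$ is constant on the singular leaf $A:=F^{-1}(F(x_0))$ through $x_0$. It then remains to upgrade ``constant along $A$'' to ``$(d\vp)_{x_0}=0$''. If $x_0$ is a regular point of a codimension-one component of $S$, Lemma \ref{lem:fol_sing_lwconst} provides coordinates with $F=(w^1)^m+F(x_0)$, $m>1$, and $\vp=\chi(w^1)$; since $\vp$ coincides with a function of ${\rm Re}((w^1)^m)$ on $V\setminus S$, it is invariant under $w^1\mapsto e^{2\pi\sqrt{-1}/m}w^1$, and a smooth rotation-invariant function on the $w^1$-plane has vanishing differential at the origin, so $(d\vp)_{x_0}=0$. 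Otherwise the tangent cone of $A$ at $x_0$ is contained in no hyperplane, so the tangent vectors of analytic arcs in $A$ span $T_{x_0}X$ over $\mathbb{R}$; as $\vp$ is constant along each such arc, again $(d\vp)_{x_0}=0$. Hence $x_0\in K_\alpha$.

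Finally, for the leafwise vanishing of elements of ${\rm SP}(\alpha)$, I would write an arbitrary $\widehat\theta\in{\rm SP}(\alpha)$ as $\widehat\theta=\theta_-+\sqrt{-1}\ddbar\vp$ with $\vp\in{\rm PSH}^\infty(X,\theta_-)$. On $V\setminus K_\alpha=V\setminus S$ we have $\theta_-|_V\equiv 0$ and, locally, $\vp=\chi\circ h_V$, so
\[
\widehat\theta|_V=\sqrt{-1}\ddbar(\chi\circ h_V)=(\chi''\circ h_V)\cdot\sqrt{-1}\del h_V\wedge\delbar h_V,
\]
using $\sqrt{-1}\ddbar h_V\equiv 0$. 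For a leaf $\mathcal{L}=\{F=\text{const}\}$ of $\mathcal{F}_V|_{V\setminus K_\alpha}$ one has $i_{\mathcal{L}}^*\del h_V=\tfrac12\,d(F|_{\mathcal{L}})=0$, whence $i_{\mathcal{L}}^*\widehat\theta\equiv 0$, as claimed. The main obstacle in this scheme is the second paragraph, namely propagating the relation $d\vp\parallel dh_V$ from the regular part $V\setminus S$ across the singular locus $S$ of $\mathcal{F}_V$ so as to conclude $(d\vp)|_S=0$; the two devices that I expect to make this work are Lemma \ref{lem:fol_sing_lwconst} (leafwise constancy over $S$ together with the local normal form) and the rotation/tangent-cone symmetry of the critical level set $A$.
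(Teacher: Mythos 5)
Your overall architecture (adaptivity via the maximum principle, the easy inclusion $K_\alpha\cap V\subset\{x\in V\mid (dh_V)_x=0\}$ from Lemma \ref{lem:K_alpha}, and the leafwise vanishing of $\widehat{\theta}=\theta_-+\sqrt{-1}\ddbar\vp$ via the local relation $\vp=\chi\circ h_V$) matches the paper, and your first case of the reverse inclusion --- the rotation-invariance argument at a regular point of a codimension-one component of $S$ --- is essentially the paper's computation. The gap is in your ``otherwise'' case. The claim that the tangent cone of $A=F^{-1}(F(x_0))$ at $x_0$ is contained in no hyperplane whenever $x_0$ fails to be a regular point of a codimension-one component of $S=\{dF=0\}$ is false: for $F=(w^1)^2+(w^2)^3$ in $\mathbb{C}^2$ the critical locus is the single point $0$, yet the tangent cone of $\{F=0\}$ at $0$ is the complex line $\{w^1=0\}$, so the tangent vectors of analytic arcs in $A$ through $0$ span only a real $2$-plane and constancy of $\vp$ along $A$ does not yield $(d\vp)_{x_0}=0$. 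Nothing in your argument excludes such local models for $F$, so the reverse inclusion is not established at those points. (A related soft spot: writing $d\vp=(\chi'\circ h_V)\,dh_V$ on $V\setminus S$ and hoping to pass to the limit at $S$ fails because $\chi'$ may blow up exactly where $dh_V\to 0$.)

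The paper closes precisely this gap by arguing by contradiction so that the bad case never arises. Assuming $(d\vp)_{x_0}\not=0$, it applies Lemma \ref{lem:key_local} $(i)$ with the roles of the two functions swapped: one writes $\del h_W=G\cdot\del\vp$ on $B$ with $|G|$ leafwise constant for the foliation $\mathcal{F}(\theta_-,\log(1+e^\vp),B)$, which is nonsingular on all of $B$ because $\vp$ has no critical point there. Hence $S\cap B=\{x\in B\mid |G(x)|=0\}$ is a union of leaves of a nonsingular codimension-one holomorphic foliation, so $x_0$ is automatically a regular point of $S$ with ${\rm dim}(S,x_0)=n-1$; only then are the normal form $F=(w^1)^m$ of Lemma \ref{lem:fol_sing_lwconst} and the rotation argument (carried out via {\bf Condition $(\heartsuit)$} applied to the components $A_r$ of the level sets of $f$, producing a smooth $\overline{\chi}$ with $\vp=\overline{\chi}\circ h_W$ near $x_0$) invoked, giving $(d\vp)_{x_0}=\overline{\chi}'(0)\cdot(dh_W)_{x_0}=0$ and the desired contradiction. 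If you reorganize your second paragraph as this proof by contradiction, taking $\vp$ rather than $h_V$ as the function defining the ambient nonsingular foliation near $x_0$, your second case disappears and the rest of your proof goes through.
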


\begin{proof}
Here we only show the assertion $(ii)$, since $(i)$ is shown by the same argument. 
In what follows, we assume that the assertion $(ii)$ of Lemma \ref{lem:natural} holds and use the notation in the proof of Lemma \ref{lem:natural}. 
Note that $\mathcal{F}_W|_{W\cap \widetilde{U}_\pm}$ coincides with the foliation $\mathcal{F}(\theta, \vp_\pm)|_{W\cap \widetilde{U}_\pm}$, since $(\del h_\pm)^\perp=(\del \vp_\pm)^\perp$ holds on $T_{\widetilde{U}_\pm}$ by Lemma \ref{lem:S42first}. 
Note also that $h_W$ is clearly $\mathcal{F}_W$-adaptive by definition of $\mathcal{F}_W$. 

Take an element $\widehat{\theta}\in {\rm SP}(\alpha)$. 
The $\mathcal{F}_W$-leafwise triviality of $\widehat{\theta}$ on $W\cap\widetilde{U}_\pm$ follows from Proposition \ref{prop:key} $(ii)$. 
On a neighborhood $B_0$ of each point $x_0$ of $\{x\in W_0\mid (dh_W)_x\not=0\}=\{x\in W_0\mid (df)_x\not=0\}$, it follows from Lemma \ref{lem:key_pointwise} $(i)$ that $\mathcal{F}_W|_{B_0}=\mathcal{F}(\theta_-, \log(1+e^f), B_0)$ (Recall that $h_W|_{W_0}=f|_{W_0}$ and $f\in {\rm PSH}^\infty(X, \theta_-)$). Thus one has that $\widehat{\theta}$ is $\mathcal{F}_W|_{B_0}$-leafwise trivial by Lemma \ref{lem:key_pointwise}, \ref{lem:key_local} $(i)$ and the $\ddbar$-lemma. 

Therefore, it is sufficient to show that $W_0\cap K_\alpha=\{x\in W_0\mid (dh_W)_x=0\}$. 
As the inclusion $W_0\cap K_\alpha\subset \{x\in W_0\mid (dh_W)_x=0\}=\{x\in W_0\mid (df)_x=0\}$ simply follows from the definition of $K_\alpha$, we will show the opposite inclusion in what follows. 

Take a point $x_0\in W_0$ such that $(dh_W)_{x_0}=0$. 
Let $B$ be a small open neighborhood of $x_0$. 
Let $F$ be a holomorphic function on $B$ such that ${\rm Re}\,F=h_W|_B$ holds. 
By adding constants to $h_W$ and $F$ if necessary, we may assume that $h_W(x_0)=F(x_0)=0$. 
We also assume that $B$ is small enough so that any connected component of $F^{-1}(0)$ and any connected component of $S:=\{x\in B\mid (dF)_x=0\}$ contain the point $x_0$. 
Note that $S\subset F^{-1}(0)$ (see Lemma \ref{lem:fol_sing_lwconst}). 

We show $x_0\in K_\alpha$ by contradiction, by assuming that there exists $\vp\in {\rm PSH}^\infty(X, \theta_-)$ such that $(d\vp)_{x_0}\not=0$ (Here we used Lemma \ref{lem:K_alpha}). 
By shrinking $B$ if necessary, we may assume that $\vp$ has no critical point in $B$. 
By applying Lemma \ref{lem:key_local} $(i)$ for $(\theta_-, \log(1+e^\vp))$ and $f\in {\rm PSH}^\infty(X, \theta_-)$, 
one has that there exists a function $G\colon B\to \mathbb{C}$ such that $\del h_W = G\cdot \del \vp$ and that $|G|$ is $\mathcal{F}(\theta_-, \log(1+e^\vp)), B)$-leafwise constant. 
As $S=\{x\in B\mid |G(x)|=0\}$, $S$ is the union of some leaves of $\mathcal{F}(\theta_-, \log(1+e^\vp)), B)$. 
As $S$ is a connected analytic subset of $B$ and each leaf of $\mathcal{F}(\theta_-, \log(1+e^\vp)), B)$ is a complex submanifold of $B$ of codimension $1$, it follows that $x_0$ is a regular point of $S$ and ${\rm dim}(S, x_0)={\rm dim}\,X-1$. 
By applying  Lemma \ref{lem:key_local} $(i)$ for $(\theta_-, \log(1+e^f))$ and $\vp\in {\rm PSH}^\infty(X, \theta_-)$, 
one has that $\vp$ is $\mathcal{F}_W|_{B\setminus S}$-leafwise constant, 
since $\mathcal{F}_W|_{B\setminus S}=\mathcal{F}(\theta_-, \log(1+e^f), B)$ by Lemma \ref{lem:key_pointwise} $(i)$. 
Thus, by Lemma \ref{lem:fol_sing_lwconst}, there exist a coordinate system $w=(w^1, w^2, \dots, w^n)$ of $B$ with $x_0=(0, 0, \dots, 0)$ and an integer $m\in \mathbb{Z}_{>1}$ such that $F(w)=(w^1)^m$ and that $\vp(w)=\chi(w^1)$ holds on $B$ for some function $\chi\colon w^1(B)\to \mathbb{R}$. 
Note that $\chi$ is of class $C^\infty$, since $\vp$ is $C^\infty$. 

In what follows, we show the existence of a function $\overline{\chi}\colon (-\delta, \delta)\to \mathbb{R}$ of class $C^\infty$ for a positive number $\delta$ such that $\chi(w^1) = \overline{\chi}({\rm Re}\,(w^1)^m)$. 
Note that, if such a function exists, then it holds that $\vp = \overline{\chi}\circ h_W$, which proves the lemma since the calculation $(d\vp)_{x_0}=\overline{\chi}'(0)\cdot (dh_W)_{x_0}=0$ contradicts to the assumption $(d\vp)_{x_0}\not=0$. 

As we have seen in \S \ref{sec:fund_obs_F-adaptive}, any element of $(-\delta, 0)\cup (0, \delta)$ is a regular value of $h_W$ for some positive number $\delta$. 
Set $\ve:=\sqrt[m]{\delta}$. 
By shrinking $B$, we may assume that $B=\{(w^1, w^2, \dots, w^n)\mid |w^j|<\ve\ \text{for all}\ j=1, 2, \dots, n\}$. 
For $r\in (I_f)_{\rm reg}\cap (-\delta, \delta)$, denote by $A_r$ the connected component of $f^{-1}(r)$ which intersects $B$. 
By {\bf Condition $(\heartsuit)$}, $\vp|_{A_r}$ is constant for any $r\in (I_f)_{\rm reg}\cap (-\delta, \delta)$. 
As $A_r\cap B=\{w\in B\mid {\rm Re}\,(w^1)^m=r\}$, 
it follows that $\chi(w^1) = \chi(\zeta_m\cdot w^1)$ holds on a dense subset of $\Delta_\ve:=\{w^1\in\mathbb{C}\mid |w^1|<\ve\}$, where $\zeta_m:=\exp(2\pi\sqrt{-1}/m)$. 
As $\chi$ is continuous, one has that $\chi$ is invariant under the rotation by $\zeta_m$. 
Therefore there exists a continuous function $\eta\colon \Delta_\delta\to \mathbb{R}$ such that $\chi(w^1)=\eta((w^1)^m)$, where $\Delta_\delta:=\{\xi\in\mathbb{C}\mid |\xi|<\delta\}$. 
Again by the same argument, one can show that there exists a continuous function $\overline{\chi}\colon (-\delta, \delta)\to \mathbb{R}$ such that $\eta(\xi)=\overline{\chi}({\rm Re}\,\xi)$. 
By construction, one has $\chi(w^1) = \overline{\chi}({\rm Re}\,(w^1)^m)$. 
As $\chi$ is of class $C^\infty$ and the map $\Delta_\ve\setminus\{0\}\ni w^1\mapsto {\rm Re}\,(w^1)^m\in (-\delta, \delta)$ is surjective submersion of class $C^\infty$, $\overline{\chi}$ is also smooth. 
\end{proof}


\section{Proof of main theorems} \label{sec:5}
Let $X$ be a connected compact K\"ahler manifold. 
For a class $\alpha\in H^{1, 1}(X, \mathbb{R})$ with ${\rm nd}(\alpha)=1$ and $\#{\rm SP}(\alpha)>1$, we show Theorems \ref{thm:main1} and \ref{thm:main2}. 

\subsection{Outline of the proof}\label{sec:5.1}
As we have seen in \S \ref{sec:4.1}, 
the assertions in Case I of Theorem \ref{thm:main2} holds when the assertion $(a)$ of Theorem \ref{thm:main_sec3} holds for some $\theta\in {\rm SP}(\alpha)$ and $\psi\in{\rm PSH}^\infty(X, \theta)\setminus\mathbb{R}$. Theorem \ref{thm:main1} is clear in this case. 
Therefore it is sufficient to show the theorems by assuming {\bf Condition $(\heartsuit)$}. 

In what follows, we assume {\bf Condition $(\heartsuit)$} and use the notation in \S \ref{sec:4.2}. 
By Lemma \ref{lem:natural}, either of the following two conditions holds: 
\begin{description}
\item[Condition $(\clubsuit)$] There exist $\theta\in {\rm SP}(\alpha)$ and $(\psi, U, (a, b))\in \mathcal{U}_c(\theta)$ such that $X\setminus \overline{U}$ is connected. \qed
\item[Condition $(\diamondsuit)$] For any $\theta\in {\rm SP}(\alpha)$ and any $(\psi, U, (a, b))\in \mathcal{U}_c(\theta)$, $X\setminus \overline{U}$ consists of two connected components $V^+$ and $V^-$ such that $\del V^-=\psi^{-1}(a)\cap\overline{U}$, $\del V^+=\psi^{-1}(b)\cap\overline{U}$. \qed
\end{description}

First let us consider the case where {\bf Condition $(\clubsuit)$} holds. 
Fix $\theta\in {\rm SP}(\alpha)$ and $(\psi, U, (a, b))\in \mathcal{U}_c(\theta)$ such that $V:=X\setminus \overline{U}$ is connected. 
Take an element $(\psi, \widetilde{U}, (c, d))\in \mathcal{U}(\theta)$ such that $U\subset \widetilde{U}$ and $c<a<b<d$, and an element $h_U\in \mathcal{H}^*(\psi, \mathcal{U}, (c, d))$. 
Note that $\{\widetilde{U}, V\}$ is a covering of $X$ by two domains. 
Define a holomorphic foliation $\mathcal{F}_\alpha$ on $X$ by letting 
\[
T_{\mathcal{F}_\alpha} = \begin{cases}
(\del h_U)^\perp & \text{on}\ \widetilde{U}\\
(\del h_V)^\perp & \text{on}\ V
\end{cases}, 
\]
where $h_V$ is the function as in Lemma \ref{lem:natural} $(i)$. 
Then it follows from 
Lemma \ref{lem:S42first} and Lemma \ref{lem:natural2} $(i)$ that the assertions in Case I\!I of Theorem \ref{thm:main2} holds in this case (by Proposition \ref{prop:key} $(ii)$ and Lemma \ref{lem:natural2} $(i)$. Note that $K_\alpha\cap \widetilde{U}=\emptyset$ since $\widetilde{U}\subset \psi^{-1}((I_\psi)_{\rm reg})$). 

Therefore, it is sufficient to show that the assertions in Case I\!I\!I of Theorem \ref{thm:main2} holds in the case where {\bf Condition $(\diamondsuit)$} holds. 
Fix an element $\theta_0\in {\rm SP}(\alpha)$, 
$(\psi_0, W_0, (c_0, d_0))\in \mathcal{U}(\theta_0)$ such that $W_0$ is a connected component of $\psi_0^{-1}((I_{\psi_0})_{\rm reg})$, 
and $h_0\in \mathcal{H}^*(\psi_0, W_0, (c_0, d_0))$. 
By Lemma \ref{lem:S42first}, we may assume that $h_0=\chi_0\circ \psi_0$ for a strictly increasing function $\chi_0\colon (c_0, d_0)\to \mathbb{R}$ by replacing $h_0$ with $-h_0$ if necessary. 
Fix also real numbers $a_0$ and $b_0$ such that $c_0<a_0<b_0< d_0$. 
By {\bf Condition $(\diamondsuit)$}, the complement $X\setminus\{x\in W_0\mid a_0\leq \psi_0(x)\leq b_0\}$ consists of two connected components, say $V_0^\pm$, such that $\del V_0^-=\{x\in W_0\mid \psi_0(x)= a_0\}$ and $\del V_0^+=\{x\in W_0\mid \psi_0(x)= b_0\}$ hold. 
In what follows, we denote by $W_0^\pm$ the domain $V_0^\pm\cup W_0$. 

In the following three subsections, 
we will construct such a function $h_\alpha$ as in Theorem \ref{thm:main2} by considering the analytic continuation of $h_0$ along the following three steps by assuming that {\bf Condition $(\heartsuit)$} and {\bf Condition $(\diamondsuit)$} hold. 
\begin{description}
\item[Step 1] For each element $(\psi, U, I)\in \mathcal{U}(\theta_0)$, we construct a function $h_U\colon W_0^-\to \mathbb{R}\cup\{-\infty\}$ such that $h_U|_{W_0}=h_0$ and that $h_U$ is pluriharmonic on a domain of $X$ which contains both $W_0$ and $U$. The construction of $h_U$ depends only on $U$ and is independent on $\psi$. 
\item[Step 2] Define a function $h_-\colon W_0^-\to \mathbb{R}\cup\{-\infty\}$ by 
$h_-(x):=\inf\{h_U(x)\mid (\psi, U, I)\in \mathcal{U}(\theta_0)\}$, and show that $h_-$ is pluriharmonic on $W_0^-\setminus M^-$, where $M^-=\{x\in W_0^-\mid h_-(x)=\textstyle\min_{W_0^-}h_-\}$. 
\item[Step 3] Construct $h^+\colon W_0^+\to \mathbb{R}\cup\{+\infty\}$ and $M^+\subset W_0^+$ in the same manner, and define $h_\alpha$ by patching $(W_0^\pm, h_\pm)$. Show that $M^+\cup M^-=K_\alpha^{\rm ess}$. 
\end{description}

We often use the following topological lemma: 
\begin{lemma}\label{lem:topological}
Assume that {\bf Condition $(\diamondsuit)$} holds. 
Let $U_1, U_2, \dots, U_N$ be relatively compact domains of $W_0^-$ such that 
$(\psi_j, U_j, \psi_j(U_j))\in \mathcal{U}_c(\theta_j)$ holds for some $\theta_j\in {\rm SP}(\alpha)$ and $\psi_j\in {\rm PSH}^\infty(X, \theta_j)$ for $j=1, 2, \dots, N$. 
Assume that $U_j\cap U_k=\emptyset$ if $j\not=k$. 
Then $W_0^-\setminus (U_1\cup U_2\cup\cdots \cup U_N)$ consists of $N+1$ connected components $L_0, L_1, \dots, L_N$. Moreover, by changing the indexes if necessary and letting $H_j^\pm$ the connected components of $\del U_j$, it holds that 
$\del L_0=\del W_0^-\cup H_1^+$, 
$\del L_1=H_1^-\cup H_2^+$, 
$\del L_2=H_2^-\cup H_3^+, \cdots, \del L_{N-1}=H_{N-1}^-\cup H_N^+$, 
and $\del L_N=H_N^-$. 
\end{lemma}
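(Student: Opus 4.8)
The plan is to reduce the statement to a question about how finitely many disjoint \emph{separating} hypersurfaces cut $X$, and then to exclude ``branching'' by means of the Monge--Amp\`ere foliation together with the maximum principle. First I would record the separation data. Since each $\overline{U_j}$ is relatively compact in $W_0^-$, Condition $(\diamondsuit)$ cuts $X$ into two pieces along it, so $\del U_j$ has two components $H_j^+=\psi_j^{-1}(b_j)\cap\overline{U_j}$ and $H_j^-=\psi_j^{-1}(a_j)\cap\overline{U_j}$; each is connected, because a connected component of a regular-value preimage has connected fibres by Ehresmann's lemma, and each of $H_j^\pm$ on its own separates $X$. In the same way $\del W_0^-$ is a single connected separating hypersurface, and I write $Z:=X\setminus\overline{W_0^-}$ for the exterior piece, bounded by $\del W_0^-$ alone.

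Next I would count the components by a dual-graph argument. The $2N+1$ pairwise disjoint, connected, two-sided hypersurfaces $\del W_0^-,H_1^\pm,\dots,H_N^\pm$ cut $X$ into pieces; form the graph whose vertices are these pieces and whose edges are the hypersurfaces. It is connected because $X$ is, and it has no cycle because each of the hypersurfaces separates $X$ and is therefore a bridge; hence it is a tree, and a tree with $2N+1$ edges has $2N+2$ vertices. Exactly $N$ of these pieces are the tubes $U_j$ (each a degree-$2$ vertex joining $H_j^+$ and $H_j^-$, no further hypersurface meeting its interior) and one is $Z$, so precisely $N+1$ of the pieces are the connected components of $W_0^-\setminus(U_1\cup\dots\cup U_N)$. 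Contracting the $N$ degree-$2$ tube-vertices produces a tree $T$ on the vertex set $\{Z,L_0,\dots,L_N\}$ whose $N+1$ edges are $\del W_0^-$ and $U_1,\dots,U_N$ and in which $Z$ is a leaf. The asserted chain structure is exactly the statement that $T$ is a path, i.e. that no $L_i$ is incident to three or more of these hypersurfaces.

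The heart of the proof is therefore to rule out such a branch vertex, and this is the step I expect to be the main obstacle, because pure topology does not suffice: a ``pair of pants'' region, equivalently a tube one of whose sides is a relatively compact \emph{cap} bounded by a single connected level set, is perfectly compatible with Condition $(\diamondsuit)$ alone. To exclude it I would use the analytic structure. Each component $L$ carries, away from $K_\alpha$, the foliation $\mathcal{F}(\theta_j,\psi_j)$ (these agree on overlaps by Lemma \ref{lem:comparison} $(iv)$), and by Condition $(\heartsuit)$ and Lemma \ref{lem:S42first} a local leafwise-constant pluriharmonic first integral $\chi\circ\psi_j$ on each boundary collar. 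The key sub-step is that a complementary region $L$ disjoint from $K_\alpha$ and bounded by a single connected level set cannot exist: on a $K_\alpha$-free $L$ the foliation is non-singular, so by Lemma \ref{lem:key_local} any critical point of $\psi_j$ lies on a codimension-one leaf and $\psi_j$ has no isolated extremum; solving the ordinary differential equation of the proof of Theorem \ref{thm:main_sec3} $(b)$ for the relevant $\widehat g$ then extends the boundary integral to a pluriharmonic function on all of $L$ that is constant on $\del L$, and the maximum principle forces it to be constant, a contradiction. Hence every cap must contain a point of $K_\alpha$, in fact an essential component.

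Finally I would upgrade this to the statement that $T$ has exactly two leaves. Using Condition $(\diamondsuit)$ for auxiliary $\mathcal{U}_c(\theta_j)$-tubes built from the $\psi_j$ on slightly larger regular-value intervals, together with the construction of $W_0^-=V_0^-\cup W_0$ (one of whose ends is the connected hypersurface $\del W_0^-$ bordering $Z$), I would show that the only leaves of $T$ are $Z$ and a single terminal cap, every other vertex being a degree-$2$ ``through-tube''. Once $T$ is known to be a path, I reindex the tubes in the order in which they are met along the path from $L_0$ to $L_N$ and label the component of $\del U_j$ facing $\del W_0^-$ as $H_j^+$ and the opposite one as $H_j^-$; reading off the incidences of the path then yields $\del L_0=\del W_0^-\cup H_1^+$, $\del L_i=H_i^-\cup H_{i+1}^+$ for $1\le i\le N-1$, and $\del L_N=H_N^-$, as claimed.
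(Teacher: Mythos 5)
Your reduction is set up correctly: the dual-graph/tree count of components is a clean equivalent of the paper's Mayer--Vietoris bound (the paper only gets the upper bound $N+1$ this way and must combine it with the separation property, whereas your tree gives the exact count at once), and you have correctly isolated the real content of the lemma, namely that the contracted tree $T$ is a path --- equivalently, that there cannot be two disjoint ``caps'', i.e.\ two tubes $U_j,U_k$ whose inner components of $X\setminus\overline{U_j}$ and $X\setminus\overline{U_k}$ are disjoint. The paper disposes of this with ``easily shown from Condition $(\diamondsuit)$ by induction on $N$''; in that induction the case where the new tube lands in the current terminal cap is indeed forced by the four-vertex tree, but when it lands in an internal component one must still show that the two outer regions $P,Q$ of $X\setminus\overline{L_i}$ land in \emph{different} components of $X\setminus\overline{U_N}$, which is exactly the branching question you raise.

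The gap is in your resolution of that question. Your key sub-step shows (modulo the availability, at this stage of the paper, of the analytic-continuation machinery you invoke) that a cap disjoint from $K_\alpha$ cannot exist, hence every cap meets $K_\alpha$, indeed contains an essential component. But this property cannot rule out a second cap: in the genuine chain configuration the terminal component $L_N$ \emph{is} a cap and it \emph{does} contain $K_\alpha^{\rm ess}\cap W_0^-$, so ``every cap contains an essential component'' is equally consistent with one cap and with two. Nothing available at this point bounds the number of essential components of $K_\alpha$ inside $W_0^-$ --- the connectedness of $M^-$ is a \emph{consequence} of the present lemma (via Lemma \ref{lem:5.5} and Steps 1--3), not an input --- so your final paragraph (``I would show that the only leaves of $T$ are $Z$ and a single terminal cap'') is precisely the statement to be proved and is only asserted, with an unspecified appeal to auxiliary tubes and to the construction of $W_0^-$. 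Concretely: for $N=2$ take $L_0$ adjacent to $\del W_0^-$, $H_1^+$ and $H_2^+$, with caps $L_1,L_2$ behind $U_1,U_2$, each containing an essential component; Condition $(\diamondsuit)$ applied to $U_1$, to $U_2$, and to the obvious enlarged or shrunken auxiliary tubes is satisfied in this configuration, and your argument produces no contradiction. Some further input in the spirit of Lemma \ref{lem:natural} (a global potential $f$ with $\theta_1=\theta_2+\sqrt{-1}\ddbar f$ for forms supported in the two tubes, combined with the maximum principle and the normalization of Lemma \ref{lem:hodge_index_thm}), or the inductive use of Condition $(\diamondsuit)$ that the paper alludes to, is still required to exclude this picture; as written, the decisive step of the lemma is missing.
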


\begin{proof}
Denote by $L$ the complement $W_0^-\setminus (U_1\cup U_2\cup\cdots \cup U_N)$. 
By considering the covering of $X$ by $U_j$'s and a suitable neighborhood of $L$, 
it follows from Mayer--Vietoris sequence that the rank of $H_0(L, \mathbb{Z})$ is at most $N+1$. 
Then the assertion can be easily shown from {\bf Condition $(\diamondsuit)$} by the induction on $N$. 
\end{proof}

\subsection{Step 1}
Let $(\psi, U, I)$ be an element of $\mathcal{U}(\theta_0)$. 
In this subsection, we construct a function $h_U\colon W_0^-\to \mathbb{R}\cup\{-\infty\}$ such that $h_U|_{W_0}=h_0$ and that $h_U$ is pluriharmonic on a domain of $X$ which contains both $W_0$ and $U$. 

\subsubsection{The construction of $h_U$ when $U\cap (V_0^-\setminus \overline{W_0})=\emptyset$}
Assume that $U\cap (V_0^-\setminus \overline{W_0})=\emptyset$. 
In this case, we denote by $U^c_{\rm outside}$ the closed set $W_0^-\setminus W_0$ 
and define $h_U\colon W_0^-\to \mathbb{R}\cup\{-\infty\}$ by 
\[
h_U:=\begin{cases}
h_0 & \text{on}\ W_0\\
\inf_{W_0}h_0 & \text{on}\ U^c_{\rm outside}
\end{cases}. 
\]
Note that this construction of $h_U$ depends only on $U$ and is independent on $\psi$. 

\subsubsection{The construction of $h_U$ when $U\cap (V_0^-\setminus \overline{W_0})\not=\emptyset$ and $U\cap W_0\not=\emptyset$}
Assume that $U\cap (V_0^-\setminus \overline{W_0})\not=\emptyset$ and $U\cap W_0\not=\emptyset$. 
By Lemma \ref{lem:comparison}, the connectedness of $U$, and {\bf Condition $(\diamondsuit)$}, 
one has that $U\cap W_0=\{x\in W_0\mid c_0<\psi_0(x)<\ell\}$ for some $\ell\in (c_0, d_0]$. 

Take an element $h_1\in \mathcal{H}^*(\psi, U, I)$. 
By applying Lemma \ref{lem:comparison} $(ii)$ to $U\cap W_0$ and Lemma \ref{lem:S42first} to $h_0|_{U\cap W_0}$ and $h_1|_{U\cap W_0}$, one has that we may assume $h_0=h_1$ holds on $U\cap W_0$ by replacing $h_1$ with $c_1h_1+c_2$ for a suitable constants $c_1, c_2\in \mathbb{R}$. 
In this case, we denote by $U^c_{\rm outside}$ the closed set $W_0^-\setminus (U\cup W_0)$ 
and define $h_U\colon W_0^-\to \mathbb{R}\cup\{-\infty\}$ by 
\[
h_U:=\begin{cases}
h_0 & \text{on}\ W_0\\
h_1 & \text{on}\ U\\
\inf_{U}h_1 & \text{on}\ U^c_{\rm outside}
\end{cases}. 
\]
Note that, by Lemma \ref{lem:S42first}, this construction of $h_U$ depends only on $U$ and is independent on $\psi$. 

\subsubsection{The construction of $h_U$ when $U\cap (V_0^-\setminus \overline{W_0})\not=\emptyset$ and $U\cap W_0=\emptyset$}
Assume that $U\cap (V_0^-\setminus \overline{W_0})\not=\emptyset$ and $U\cap W_0=\emptyset$. 
In this case, it follows from Lemma \ref{lem:topological} that the complement $W_0^-\setminus U$ consists of two connected components. Among them, there uniquely exists a component whose boundary coincides with a connected component of $\del U$, which we denote by $U^c_{\rm outside}$. 
It follows from Lemma \ref{lem:natural} $(ii)$ that there exists a non-constant pluriharmonic function $h_1$ on $W_0^-\setminus U^c_{\rm outside}$ such that $h_1|_{W_0}\in \mathcal{H}^*(\psi_0, W_0, (c_0, d_0))$ and that that $h_1|_U\in \mathcal{H}^*(\psi, U, I)$. 
From Lemma \ref{lem:S42first}, it follows that we may assume that $h_1|_{W_0}=h_0$ by replacing $h_1$ with $c_1h_1+c_2$ for a suitable constants $c_1, c_2\in \mathbb{R}$. 
In this case, we define $h_U\colon W_0^-\to \mathbb{R}\cup\{-\infty\}$ by 
\[
h_U:=\begin{cases}
h_1 & \text{on}\ W_0^-\setminus U^c_{\rm outside}\\
\inf_{W_0^-\setminus U^c_{\rm outside}}h_1 & \text{on}\ U^c_{\rm outside}
\end{cases}. 
\]
Note that, as it is clear by applying the identity theorem to $h_U|_{W_0}$, this construction of $h_U$ depends only on $U$ and is independent on $\psi$. 

\subsection{Step 2}
First, let us check the following: 
\begin{lemma}\label{lem:double_star}
Let $(\psi, U, I)$ be an element of $\mathcal{U}(\theta_0)$. Then the following holds for $r\in (\textstyle\min_{W_0^-}h_U, \textstyle\sup_{W_0^-}h_U)$: \\
$(i)$ $W_0^-\setminus h_U^{-1}(r)$ consists of two connected components $\{h_U<r\}$ and $\{h_U>r\}$. \\
$(ii)$ If $r$ is a regular value of $h_U$, $h_U^{-1}(r)$ is connected. 
\end{lemma}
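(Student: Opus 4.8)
The plan is to read off from the Step~1 construction that, in all three cases, $h_U$ is a non-constant pluriharmonic function on the connected open set $\Omega:=W_0^-\setminus U^c_{\rm outside}$, is equal to the constant $\min_{W_0^-}h_U$ on the closed set $U^c_{\rm outside}$, and tends to $\sup_{W_0^-}h_U$ exactly along the boundary $\del W_0^-$, which (by the Condition $(\diamondsuit)$ setup together with Ehresmann's lemma for $\psi_0|_{W_0}$) is the single connected level set $\psi_0^{-1}(d_0)\cap\overline{W_0}$; in particular the minimum is attained in the interior while the supremum is approached only at the top boundary. Since $X$ is compact and $r$ is bounded away from both $\min_{W_0^-}h_U$ and $\sup_{W_0^-}h_U$, this shows $h_U|_\Omega\colon\Omega\to(\min_{W_0^-}h_U,\sup_{W_0^-}h_U)$ is proper and $h_U^{-1}(r)\subset\Omega$ is compact; as in \S\ref{sec:fund_obs_F-adaptive} the critical values of $h_U|_\Omega$ are discrete, so for regular $r$ the level set $h_U^{-1}(r)$ is a compact $C^\omega$ hypersurface.

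I would prove $(ii)$ first and deduce $(i)$ from it. The starting point for $(ii)$ is that the superlevel set $\{h_U>r\}$ is connected for every regular $r$. For $r$ close to $\sup_{W_0^-}h_U$ this set lies in $W_0$, where $h_U=\chi_0\circ\psi_0$, so it is of the form $\{x\in W_0\mid\psi_0(x)>s\}$, which is connected by Ehresmann's lemma; as $r$ decreases the open set $\{h_U>r\}$ only grows, and since $h_U$ is pluriharmonic it has no local maximum in $\Omega$, so no new connected component can be born, and $\{h_U>r\}$ stays connected. Granting this, suppose $h_U^{-1}(r)$ had $k\ge 2$ connected components for some regular $r$. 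Choosing $\ve>0$ with $[r-\ve,r+\ve]$ consisting of regular values, Ehresmann's lemma makes the thin collars $U_1,\dots,U_k$ around the components into pairwise disjoint relatively compact domains of $W_0^-$, each of the form $\Sigma_j\times(r-\ve,r+\ve)$. Then Lemma \ref{lem:topological} applies and linearly orders the components $L_0,\dots,L_k$ of $W_0^-\setminus\bigcup_jU_j$; since $h_U$ is monotone across each collar and passes through the value $r$ there, the sign of $h_U-r$ alternates along $L_0,L_1,\dots,L_k$, so the ``$+$''--regions split $\{h_U>r\}$ into at least two pieces, contradicting its connectedness. Hence $k=1$, which is $(ii)$.

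For $(i)$ with $r$ regular I would thicken the now-connected level $h_U^{-1}(r)$ to a single collar $U'=h_U^{-1}((r-\ve,r+\ve))$ and apply Lemma \ref{lem:topological} with $N=1$: the set $W_0^-\setminus U'$ has exactly two connected components, namely $\{h_U\ge r+\ve\}$ and $\{h_U\le r-\ve\}$, and re-adjoining the two halves of $U'$ shows that $\{h_U>r\}$ and $\{h_U<r\}$ are connected; they are nonempty since $\min_{W_0^-}h_U<r<\sup_{W_0^-}h_U$. For an arbitrary (possibly critical) $r$ I would write $\{h_U>r\}=\bigcup_{r'>r}\{h_U>r'\}$ and $\{h_U<r\}=\bigcup_{r'<r}\{h_U<r'\}$ as nested unions over regular $r'$, hence as increasing unions of connected sets, which are connected; as their disjoint union is $W_0^-\setminus h_U^{-1}(r)$, this gives $(i)$.

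The main obstacle is the verification, needed to invoke Condition $(\diamondsuit)$ and Lemma \ref{lem:topological}, that each thin collar $U_j$ is genuinely an element of $\mathcal{U}_c(\theta)$, i.e. that $h_U$ restricted to it factors as $\chi\circ\psi$ for a \emph{global} $\psi\in{\rm PSH}^\infty(X,\theta)$. This is transparent on the part of $\Omega$ lying in $W_0$, where $h_U=\chi_0\circ\psi_0$, and on an extension region $U$ of the second case of Step~1, where $h_U$ factors through the defining potential of $U$ (by Lemmata \ref{lem:comparison}, \ref{lem:S42first} and \ref{lem:4.3}); the delicate point is the third case of Step~1, where $U$ is disjoint from $W_0$ and $h_U$ must be realized on the intermediate region as $\chi\circ\psi$ for a global potential, which I would extract from the patching construction of Lemma \ref{lem:natural} together with Lemma \ref{lem:4.5}. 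A secondary point requiring care is the passage of $h_U$ across its discrete set of critical values: the maximum-principle argument controls the superlevel sets directly, but concluding connectedness of the level sets themselves is exactly what forces the combination with the collar/alternating-sign bookkeeping supplied by Condition $(\diamondsuit)$ and Lemma \ref{lem:topological}.
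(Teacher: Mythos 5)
Your proposal is correct in outline but reorganizes the argument relative to the paper, which proves $(i)$ first and directly: every connected component of $\{h_U<r\}\setminus U^c_{\rm outside}$ has boundary inside $\del U^c_{\rm outside}\cup h_U^{-1}(r)$, hence must touch $\del U^c_{\rm outside}$ by the maximum principle, and the connectedness of $U^c_{\rm outside}$ then glues all such components together; the superlevel set is handled symmetrically. Your sweeping argument for $\{h_U>r\}$ is this same maximum-principle argument in disguise, but your route to the connectedness of $\{h_U<r\}$ (via $(ii)$, Lemma \ref{lem:topological} with $N=1$, and nested unions over regular values) is a detour compared with going through the connected set $U^c_{\rm outside}$ directly. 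For $(ii)$ the paper does not surround all components of the level set by collars at once; it splits into cases according to where $r$ sits: when $r+\ve>\inf_{W_0}h_0$ it shows by the maximum principle that $h_U^{-1}(r+\ve)$ equals the single connected set $h_0^{-1}(r+\ve)\cap W_0$, symmetrically near $U$, and only in the intermediate zone does it realize the thickened level set as an element of $\mathcal{U}_c(\theta_-)$ via the global potential $f$ coming from Lemmata \ref{lem:natural} and \ref{lem:4.5}, after which Condition $(\diamondsuit)$ for that one component together with $(i)$ excludes a second one. Your alternating-sign bookkeeping with Lemma \ref{lem:topological} is a clean substitute for that last step, but the obstacle you flag --- that \emph{every} collar must be a member of $\mathcal{U}_c(\theta)$ for some globally defined potential --- is precisely what the paper's case split is engineered to avoid checking for arbitrary components. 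The verification does go through, because the maximum principle forces $h_U<\inf_{W_0}h_0$ on $\Omega\setminus W_0$ and $h_U>\sup_{U}h_U$ on the intermediate zone, so for each fixed $r$ all components of $h_U^{-1}(r)$ lie in a single one of the three regions ($W_0$, $U$, or the zone in between) and are level components of one global potential ($\psi_0$, the potential defining $U$, or $f$, respectively); this observation, which rules out components straddling the transition regions, is the piece you would still need to write down to close your version of the argument.
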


\begin{proof}
$(i)$ As $\del \{x\in W_0^-\setminus U^c_{\rm outside}\mid h_U(x)<r\} =\del U^c_{\rm outside} \cup h_U^{-1}(r)$ and  
$h_U$ takes the value $\textstyle\min_{W_0^-}h_U$ at any point of $\del U^c_{\rm outside}$ and $r$ at any point of $h_U^{-1}(r)$, 
one has that any connected component of $\{x\in W_0^-\setminus U^c_{\rm outside}\mid h_U(x)<r\}$ touches both $\del U^c_{\rm outside}$ and $h_U^{-1}(r)$ by the maximum principle for the non-constant pluriharmonic function $h_U|_{W_0^-\setminus U^c_{\rm outside}}$, from which one can easily deduce that $\{h_U^{-1}<r\}$ is connected. 
As one can show the connectedness of $\{h_U^{-1}>r\}$ by the same argument, the assertion holds. \\
$(ii)$ 
Take a small positive number $\ve$ such that $(r-\ve, r+\ve)$ is included in the set of all the regular values of $h_U$. By the argument in \S \ref{sec:fund_obs_F-adaptive}, $h_U^{-1}((r-\ve, r+\ve))$ is homeomorphic to the product $h_U^{-1}(r)\times (r-\ve, r+\ve)$. 

First let us consider the case where $r+\ve>\textstyle\inf_{W_0}h_0$. 
In this case, it follows from $\chi_0'>0$ and Ehresmann's lemma for $\psi_0|_{W_0}$ that $A:=h_0^{-1}(r+\ve)\cap W_0$ is connected. 
Let $W$ be the connected component of $W_0^-\setminus A$ such that $\del W=A\cup \del U^c_{\rm outside}$. By the maximum principle for $h_U|_{W}$, one has that $A=h_0^{-1}(r+\ve)$, from which the assertion follows. 

When $r-\ve<\textstyle\sup_{U}h_U$, one can show the assertion by the same argument as in the case where $r+\ve>\textstyle\inf_{W_0}h_0$. 
Therefore, by replacing $\ve$ with a smaller number if necessary, it is sufficient to show the assertion by assuming that $U\subset W_0^-\setminus W_0$ and that $\textstyle\sup_{U}h_U<r-\ve<r+\ve<\textstyle\inf_{W_0}h_0$ (Here recall that $\textstyle\sup_{U}h_U<\textstyle\inf_{W_0}h_0$ holds in this case by the argument in the proof of Lemma \ref{lem:natural}). 
As $h_U^{-1}((r-\ve, r+\ve))$ is homeomorphic to the product $h_U^{-1}(r)\times (r-\ve, r+\ve)$, the assertion is reduced to the connectedness of $h_U^{-1}((r-\ve, r+\ve))$. 
Take a connected component $W$ of $h_U^{-1}((r-\ve, r+\ve))$. 
By Lemma \ref{lem:S42first} and the argument in the proof of Lemma \ref{lem:natural}, one can take constants $c_1, c_2\in\mathbb{R}$, $\theta_\pm\in {\rm SP}(\alpha)$ such that ${\rm Supp}\,\theta_-\subset U$ and ${\rm Supp}\,\theta_+\subset W_0$, and an element $f\in {\rm PSH}^\infty(X, \theta_-)$ such that $h_U=c_1f+c_2$ holds on $W$. 
As $(f, W, f(W))$ is an element of $\mathcal{U}_c(\theta_-)$ by replacing $\ve$ with a smaller number if necessary, it follows from {\bf Condition $(\diamondsuit)$} that the complement $W_0^-\setminus W$ consists of two connected component. 

Assume that $h_U^{-1}((r-\ve, r+\ve))$ has another component $W'$. 
Then it follows from the assertion $(i)$ and a simple topological argument that $\{h_U\not=r\}\cup W'$ is connected, which contradicts to the non-connectedness of $W_0^-\setminus W$. 
Therefore one has that $h_U^{-1}((r-\ve, r+\ve))=W$ is connected. 
\end{proof}

Now let us define a function $h_-\colon W_0^-\to \mathbb{R}\cup\{-\infty\}$ by 
$h_-(x):=\inf\{h_U(x)\mid (\psi, U, I)\in \mathcal{U}(\theta_0)\}$, 
and the set $M^-$ by $M^-:=\textstyle\bigcap\{U^c_{\rm outside}\mid (\psi, U, I)\in \mathcal{U}(\theta_0)\}$ (Though apparently this definition of $M^-$ maybe different from that in \S \ref{sec:5.1}, we will soon show the equivalence between these definitions). 

We show the following: 
\begin{lemma}\label{lem:5.5}
For elements $(\psi, U, I)$ and $(\vp, W, J)$ of $\mathcal{U}(\theta_0)$, the following holds: \\
$(i)$ $W_0^-\setminus M^-$ is connected. \\
$(ii)$ Either $U^c_{\rm outside}\subset W^c_{\rm outside}$ or $W^c_{\rm outside}\subset U^c_{\rm outside}$ holds. \\
$(iii)$ $h_-=h_U$ holds on $W_0^-\setminus U^c_{\rm outside}$. \\
$(iv)$ $h_-|_{W_0^-\setminus M^-}$ is a non-constant pluriharmonic function. \\
$(v)$ $M^-=\{x\in W_0^-\mid h_-(x)=\textstyle\min_{W_0^-}h_-\}$. 
\end{lemma}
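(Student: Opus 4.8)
The plan is to first record the common structure of every $h_U$ that makes the five assertions fall out, and then to treat (ii) --- the nesting of the sets $U^c_{\rm outside}$ --- as the crux from which (iii), (iv) and (v) follow by a maximum--principle comparison. For each $(\psi, U, I)\in \mathcal{U}(\theta_0)$ I write $D_U := W_0^-\setminus U^c_{\rm outside}$ and $m_U := \min_{W_0^-}h_U$. In each of the three construction cases of Step 1 one checks directly that $D_U$ is a connected domain containing $W_0$, that $h_U|_{D_U}$ is a non-constant pluriharmonic function with $h_U|_{W_0}=h_0$, and that $h_U\equiv m_U$ on $U^c_{\rm outside}$; the maximum principle on $D_U$ then upgrades this to $U^c_{\rm outside}=\{x\in W_0^-\mid h_U(x)=m_U\}$, and Lemma \ref{lem:double_star} records that each level set $h_U^{-1}(r)$ is connected and separates $W_0^-$ into $\{h_U<r\}$ and $\{h_U>r\}$. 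Assertion (i) is then immediate, since $W_0^-\setminus M^- = \bigcup_U D_U$ is a union of connected sets all containing $W_0$, hence connected.

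For (ii) I would reduce the statement to a linear ordering of the ``slabs'' $U$ and $W$ along $W_0^-$. When $U\cap W=\emptyset$ I apply the topological Lemma \ref{lem:topological} to the disjoint pair to obtain three connected components $L_0\ni W_0$, $L_1$, $L_2$ arranged in a chain; this places $U$ and $W$ in a definite near/far order, and the far region cut out by the nearer slab then contains the farther slab together with its own far region, which is precisely one of the inclusions $U^c_{\rm outside}\subset W^c_{\rm outside}$ or $W^c_{\rm outside}\subset U^c_{\rm outside}$. When $U\cap W\neq\emptyset$ I use Lemma \ref{lem:comparison} (on each component of the overlap the level sets of $\psi$ and $\vp$, hence the foliations and the separating hypersurfaces $\partial U^c_{\rm outside}$, $\partial W^c_{\rm outside}$, coincide) together with {\bf Condition $(\diamondsuit)$} to conclude that the two slabs are again comparable, so the same chain structure persists. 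Carrying out this case analysis uniformly, and reconciling it with the three distinct definitions of $U^c_{\rm outside}$, is the step I expect to be the main obstacle.

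Once (ii) is available, (iii) follows by comparing $h_U$ with an arbitrary $h_W$ on $D_U$. By (ii) the domains $D_U$, $D_W$ are nested, and the inclusion $U^c_{\rm outside}\subset W^c_{\rm outside}$ forces $m_U\le m_W$ (the farther cap yields the lower minimum, as one sees by restricting to $D_W\subset D_U$). If $D_U\subset D_W$, then $h_U$ and $h_W$ are pluriharmonic on the connected set $D_U$ and agree on $W_0$, hence coincide on $D_U$ by the identity theorem. If $D_W\subset D_U$, they coincide on $D_W$ by the same argument, while on the annular region $W^c_{\rm outside}\setminus U^c_{\rm outside}\subset D_U$ the pluriharmonic function $h_U$ has boundary values $m_W$ on $\partial W^c_{\rm outside}$ and $m_U\le m_W$ on $\partial U^c_{\rm outside}$, so $h_U\le m_W=h_W$ there by the maximum principle. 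In either case $h_W\ge h_U$ on $D_U$; taking the infimum over all $W$ and noting that $U$ itself occurs in the family gives $h_-=h_U$ on $D_U$.

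Assertions (iv) and (v) are then formal consequences. For (iv), $h_-$ is locally pluriharmonic on $\bigcup_U D_U = W_0^-\setminus M^-$ because it agrees there with the pluriharmonic functions $h_U$ (which are mutually consistent on overlaps by the nesting in (ii)), and it is non-constant since $h_-|_{W_0}=h_0$. For (v), a point $x$ lies in $M^-$ if and only if $x\in U^c_{\rm outside}$ for every $U$, in which case $h_-(x)=\inf_U m_U=:m$; conversely, if $x\notin M^-$ then $x\in D_U$ for some $U$, whence $h_-(x)=h_U(x)>m_U\ge m$ by the strict maximum principle. Therefore $M^-=\{x\in W_0^-\mid h_-(x)=m\}=\{x\in W_0^-\mid h_-(x)=\min_{W_0^-}h_-\}$, the minimum being attained exactly on $M^-$.
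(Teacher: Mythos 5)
Your proposal is correct and follows essentially the same route as the paper: (i) via the union of the connected sets $W_0^-\setminus U^c_{\rm outside}\supset W_0$, (ii) via Lemma \ref{lem:comparison} in the overlapping case and Lemma \ref{lem:topological} in the disjoint case, (iii) via the identity theorem on the smaller domain plus a maximum-principle comparison on the region between $\del U^c_{\rm outside}$ and $\del W^c_{\rm outside}$ (the paper phrases this as $h_W=\max\{h_U,A\}$ using Lemma \ref{lem:double_star}, which is the same estimate), and (iv), (v) as formal consequences. The only differences are cosmetic --- you organize (iii) by the nesting direction rather than by whether $U\cap W=\emptyset$, and you record $U^c_{\rm outside}=\{h_U=m_U\}$ up front --- and the part of (ii) you flag as delicate is dispatched by the paper with exactly the two lemmas you cite.
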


\begin{proof}
$(i)$ Note that $W_0^-\setminus U^c_{\rm outside}$ is connected by the definition of $U^c_{\rm outside}$. The assertion holds just by a simple topological argument. \\
$(ii)$ The assertion is clear by Lemma \ref{lem:comparison} when $U\cap W\not=\emptyset$. 
When $U\cap W=\emptyset$, the assertion follows from Lemma \ref{lem:topological}. \\
$(iii)$ It is sufficient to show that $h_U\leq h_W$ holds on $W_0^-\setminus U^c_{\rm outside}$ for an element $(\vp, W, J)\in \mathcal{U}(\theta_0)$. 
As this inequality easily follows from Lemma \ref{lem:comparison} (and the identity theorem for $h_U|_{W_0}$ and $h_W|_{W_0}$) when $U\cap W\not=\emptyset$, in what follows we assume that $U\cap W=\emptyset$. 
By the assertion $(ii)$, Either $U^c_{\rm outside}\subset W^c_{\rm outside}$ or $W^c_{\rm outside}\subset U^c_{\rm outside}$ holds. \\
\ First let us consider the case $W^c_{\rm outside}\subset U^c_{\rm outside}$. 
As both the functions $h_U$ and $h_W$ are pluriharmonic functions on a domain $W_0^-\setminus U^c_{\rm outside}$ which coincide with $h_0$ on $W_0$, it follows from the identity theorem that $h_U=h_W$ on $W_0^-\setminus U^c_{\rm outside}$. \\
\ Next we consider the case $U^c_{\rm outside}\subset W^c_{\rm outside}$. 
From the same argument as above, it follows that $h_U=h_W$ holds on $W_0^-\setminus W^c_{\rm outside}$. 
Therefore, by Lemma \ref{lem:double_star} and the definitions of $h_U$ and $h_W$, $h_W|_{W_0^-\setminus U^c_{\rm outside}}$ coincides with the function $x\mapsto \max\{h_U(x), A\}$, where $A$ is the value of $h_U$ along $\del W^c_{\rm outside}$. 
Therefore $h_U\leq h_W$ holds on $W_0^-\setminus U^c_{\rm outside}$. \\
$(iv)$ Let $x$ be a point of $W_0^-\setminus M^-$. Then, by the definition of $M^-$, $x$ is included in $W_0^-\setminus U^c_{\rm outside}$ for some $(\psi, U, I)\in \mathcal{U}(\theta_0)$. 
As it follows from the assertion $(iii)$ that $h_-$ is a non-constant pluriharmonic function on a neighborhood of $x$, the assertion holds. \\
$(v)$ First we show the inclusion $\{x\in W_0^-\mid h_-(x)=\textstyle\min_{W_0^-}h_-\}\subset M^-$. 
Take a point $x\in W_0^-\setminus M^-$. 
Then there exists $(\psi, U, I)\in \mathcal{U}(\theta_0)$ such that $x\in W_0^-\setminus U^c_{\rm outside}$. 
By the assertion $(iii), (iv)$ and the maximum principle, it holds that $h_-$ does not attain the minimum value at $x$. 
Next we show the opposite inclusion. Take a point $x\in M^-$. 
Fix a point $y\in W_0^-$. 
By the definition of $M^-$ and $h_U$, $h_U(x)\leq h_U(y)$ holds for any $(\psi, U, I)\in \mathcal{U}(\theta_0)$. 
Thus one obtains that $h_-(x)\leq h_-(y)$ holds, from which the assertion follows. 
\end{proof}

\subsection{Step 3}
Construct $h^+\colon W_0^+\to \mathbb{R}\cup\{+\infty\}$ and define $M^+\subset W_0^+$ in the same manner as in Steps 1 and 2 so that $h_+|_{W_0^+\setminus M^+}$ is a pluriharmonic function which coincides with $h_0$ on $W_0$. 
Denote by $M$ the union $M^-\cup M^+$. 
Define a function $h_\alpha\colon X\to [-\infty, +\infty]$ by 
\[
h_\alpha:=\begin{cases}
h_+ & \text{on}\ W_0^+\\
h_- & \text{on}\ W_0^-
\end{cases}. 
\]
First we show the following: 
\begin{lemma}\label{lem:MinK}
The set $M$ is included in $K_\alpha$. 
\end{lemma}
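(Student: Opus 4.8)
The plan is to reduce, via Lemma \ref{lem:K_alpha} applied with the fixed form $\theta = \theta_0$, to showing that every $\psi \in \mathrm{PSH}^\infty(X,\theta_0)$ satisfies $(d\psi)_{x_0}=0$ at each $x_0 \in M$. I would treat $x_0 \in M^-$, the case $x_0 \in M^+$ being symmetric via the parallel construction of Step~3. Arguing by contradiction, I assume that some $\psi$ (necessarily non-constant) has $(d\psi)_{x_0}\neq 0$; then $\psi$ is a submersion on a small ball $B\ni x_0$ with $B\subset W_0^-$. Throughout I would use Lemma \ref{lem:5.5}: that $M^- = \{x\in W_0^-\mid h_-(x)=\textstyle\min_{W_0^-}h_-\}$ by $(v)$, that $h_-|_{W_0^-\setminus M^-}$ is non-constant pluriharmonic by $(iv)$, and that $h_-=h_U$ off $U^c_{\rm outside}$ by $(iii)$.

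The first key step (A) is to show $M^-\cap B\subseteq \psi^{-1}(\mathcal{C}_\psi)$, where $\mathcal{C}_\psi$ is the closed, Lebesgue-measure-zero (by Sard) set of critical values of $\psi$. Indeed, if $x\in B$ and $\psi(x)$ is a regular value, I pick an interval $J\subset (I_\psi)_{\rm reg}$ around $\psi(x)$ and let $U$ be the connected component of $\psi^{-1}(J)$ through $x$; then $(\psi,U,J)\in\mathcal{U}(\theta_0)$, and tracing the three cases of the Step~1 construction gives $x\in U\subseteq W_0^-\setminus U^c_{\rm outside}$, whence $x\notin U^c_{\rm outside}\supseteq M^-$. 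Since $\psi|_B$ is a submersion, $\psi^{-1}(\mathcal{C}_\psi)\cap B$ is closed with empty interior, so $M^-\cap B$ is nowhere dense in $B$.

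The second key step (B) is to exhibit $h_-$ near $x_0$ as a monotone function of a single transversal coordinate. On $B$ I form $\widehat\psi=\log(1+e^\psi)$; by \eqref{eq:ddbar_logexp} and $(d\psi)_{x_0}\neq 0$ one has $(\theta_0+\sqrt{-1}\ddbar\widehat\psi)\neq 0$ on $B$, so $\mathcal{F}(\theta_0,\widehat\psi,B)$ is a holomorphic foliation with $T=(\del\psi)^\perp$ admitting a local holomorphic first integral $F$, and $\tau:=\mathrm{Re}\,F$ is a local adaptive-type function (see \S\ref{sec:fund_obs_F-adaptive}) of which $\psi$ is a monotone function. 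By Lemma \ref{lem:comparison} and Lemma \ref{lem:S42first}, each $h_U$ is, on the dense open set $B\setminus M^-$, an affine function of $\tau$ with a common orientation (inherited from $h_0=\chi_0\circ\psi_0$, $\chi_0'>0$); hence $h_-=\inf_U h_U$ is a monotone function $G(\tau)$, say non-decreasing, on $B\setminus M^-$, extending across the nowhere-dense set $M^-\cap B$.

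To conclude: if $h_-\equiv\textstyle\min_{W_0^-}h_-$ on a neighborhood of $x_0$, that neighborhood lies in $M^-$, contradicting (A); so $h_-$ genuinely varies near $x_0$, with $G$ non-decreasing and $G(\tau(x_0))=\textstyle\min_{W_0^-}h_-$. As $h_-\geq \textstyle\min_{W_0^-}h_-$ everywhere, $G(\tau)=\textstyle\min_{W_0^-}h_-$ for all $\tau\leq \tau(x_0)$, so $h_-$ is constant $=\textstyle\min_{W_0^-}h_-$ on the open one-sided set $\{\tau<\tau(x_0)\}\cap B$, again an open subset of $M^-$ and again contradicting (A). Hence $(d\psi)_{x_0}=0$, giving $M^-\subseteq K_\alpha$. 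I expect the main obstacle to be step (B): verifying that the various $h_U$ are affine in $\tau$ with a \emph{consistent} orientation, so that their infimum is genuinely monotone, and that $h_-$ (an infimum of continuous functions) is leafwise constant across the thin set $M^-\cap B$; this is precisely where the first-integral description of $\mathcal{F}(\theta_0,\widehat\psi,B)$ must be combined carefully with Lemmas \ref{lem:comparison} and \ref{lem:S42first}.
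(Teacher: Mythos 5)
Your proposal diverges from the paper's proof at the decisive step, and the divergence is where the gaps lie. The paper's argument is purely topological: for $\psi$ with $(d\psi)_{x_0}\neq 0$ it takes the connected component $D$ of $\psi^{-1}((r-\ve_1,r+\ve_2))$ through $x_0$ (with $r=\psi(x_0)$ possibly a \emph{critical value}), builds collar elements $U_1,U_2\in\mathcal{U}(\theta_0)$ around the two boundary components of $D$ with $x_0\notin U_1\cup U_2$, and uses Lemma \ref{lem:topological} together with the nesting from Lemma \ref{lem:5.5} $(ii)$ to conclude $x_0\notin (U_1)^c_{\rm outside}\supseteq M^-$ directly. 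Your step (A) achieves the analogous conclusion only when $\psi(x_0)$ is a regular \emph{value} — which $(d\psi)_{x_0}\neq 0$ does not guarantee — and even there the Case~1 branch of the Step~1 construction is not airtight: if $U\cap(V_0^-\setminus\overline{W_0})=\emptyset$ then $U^c_{\rm outside}=W_0^-\setminus W_0$, and a point $x\in U\cap\del W_0$ still lies in $U^c_{\rm outside}$, so ``$x\in U$'' does not by itself give ``$x\notin U^c_{\rm outside}$'' in that case.

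The substantive gap is step (B), which is meant to cover the critical-value case. Two assertions there are not justified. First, that each $h_U$ (hence $h_-$) is a function of the single transversal $\tau={\rm Re}\,F$ on $B$: a pluriharmonic, leafwise-constant function is a priori only of the form $u(F)$ with $u$ harmonic; constancy on the level sets of ${\rm Re}\,F$ requires identifying those local level sets with connected components of \emph{global} level sets of $\psi$ on which $h_U$ is everywhere leafwise constant, so that {\bf Condition $(\heartsuit)$} applies — but such a global component may leave $W_0^-\setminus U^c_{\rm outside}$ (or $W_0^-$ altogether), where $h_U$ is not controlled (or not defined). This is exactly the kind of global connectivity input the paper supplies via Lemma \ref{lem:double_star} and the collar construction, and it cannot be read off the local foliation alone. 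Second, the ``consistent orientation'': you need $U^c_{\rm outside}\cap B$ to be a one-sided sublevel set $\{\tau\le\tau_U\}$ for every $U$, with all slopes of the same sign, before $h_-=\inf_U h_U$ can be declared monotone in $\tau$. Without this, nothing in your argument excludes the configuration $h_-=\min h_-+|\tau-\tau_0|$ near $x_0$, with $M^-\cap B=\{\tau=\tau_0\}$ a real hypersurface: that is nowhere dense (consistent with your step (A)), continuous, pluriharmonic off $M^-$, and attains its minimum exactly on $M^-$, so your final contradiction evaporates. The one-sidedness is again a global topological fact (Condition $(\diamondsuit)$ plus Lemma \ref{lem:topological}), and the paper's proof is organized precisely so as to use it directly on the sets $U^c_{\rm outside}$ rather than on the function $h_-$.
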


\begin{proof}
We show that any point $x\in M$ is a point of $K_\alpha$. 
We may assume that $x\in M^-$, since the proof for the case $x\in M^+$ can be done by the same argument as in this case. 
Assuming $x\not \in K_\alpha$, we show the assertion by construction. 
By Lemma \ref{lem:K_alpha}, there exists $\psi\in {\rm PSH}^\infty(X, \theta_0)$ such that $(d\psi)_x\not=0$. 
As $r:=\psi(x)$ is neither the maximum or minimum of $\psi$, it follows from Sard's theorem that $r-\ve_1$ and $r+\ve_2$ are regular values of $\psi$ for (generic) small positive numbers $\ve_1$ and $\ve_2$. 
Let $D$ be the connected component of $\psi^{-1}((r-\ve_1, r+\ve_2))$ which includes $x$, 
and $H_1, H_2, \dots, H_N$ be the connected components of the boundary $\del D$. 
As $(d\psi)_x\not=0$, we may assume that $N>1$ by replacing $\ve_1, \ve_2$ with smaller ones if necessary. 
As both $r-\ve_1$ and $r+\ve_2$ are regular values, one can take small open neighborhoods $W_j$ of each $H_j$ for $j=1, 2, \dots, N$ such that $U_j:=W_j\cap D$ satisfies $(\psi, U_j, \psi(U_j))\in\mathcal{U}(\theta_0)$. 
We assume that $x\not\in U_j$ by shrinking $U_j$'s if necessary. 
It follows from the connectedness of $D$ and Lemma \ref{lem:topological} that $N=2$. 
By Lemma \ref{lem:5.5} $(ii)$, we may assume 
$(U_1)^c_{\rm outside}\subset (U_2)^c_{\rm outside}$. 
Again by Lemma \ref{lem:topological}, it follows from $U_1\cup U_2\subset D$ that $(D\setminus (U_1\cup U_2))\cap (U_1)^c_{\rm outside}=\emptyset$, which contradicts to $x\in M^-$. 
\end{proof}

Define a holomorphic foliation $\mathcal{F}_\alpha$ on $X\setminus M$ by letting $T_{\mathcal{F}_\alpha} = (\del h_\alpha)^\perp$. 
Note that, by Lemma \ref{lem:MinK}, $\mathcal{F}_\alpha$ is defined especially on $X\setminus K_\alpha$. 
Note that $h_\alpha|_{\overline{X\setminus M}}$ is clearly an $\mathcal{F}_\alpha$-adaptive function. 
\begin{lemma}\label{lem:F_alpha_III}
The following holds:\\
$(i)$ $K_\alpha\setminus M=\{x\in X\setminus M\mid (dh_\alpha)_x=0\}$. \\
$(ii)$ Any element of ${\rm SP}(\alpha)$ is zero along each leaf of $\mathcal{F}_\alpha|_{X\setminus M}$. 
\end{lemma}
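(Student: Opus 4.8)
The plan is to reduce everything to two local normal forms for $h_\alpha$ near a point $x_0\in X\setminus M$ and then invoke the lemmas already proved in \S\ref{sec:4.2}. First I would record the following local description, obtained by tracing the construction of $h_U$ in Steps 1--3 together with Lemma \ref{lem:5.5} $(iii)$: for each $x_0\in W_0^-\setminus M^-$ there is $(\psi, U, I)\in\mathcal{U}(\theta_0)$ with $x_0\notin U^c_{\rm outside}$, so that $h_\alpha=h_-=h_U$ on a neighborhood $B$ of $x_0$, and on $B$ exactly one of the following holds. In \emph{type A}, $h_\alpha|_B=\chi\circ\psi'|_B$ for a global $\psi'\in{\rm PSH}^\infty(X,\theta_0)$ and a smooth $\chi$ with $\chi'>0$, at a point where $(d\psi')_{x_0}\neq 0$ (this covers the pieces lying in $W_0$ or in $U$, where $h_\alpha$ restricts to an element of some $\mathcal{H}^*$). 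In \emph{type B}, $h_\alpha|_B=c_1 f+c_2$ with $c_1\neq 0$ and a global $f\in{\rm PSH}^\infty(X,\theta)$ for some $\theta\in{\rm SP}(\alpha)$, namely the $\ddbar$-lemma potential with $\theta_+=\theta_-+\sqrt{-1}\ddbar f$ supplied in the proof of Lemma \ref{lem:natural}. The symmetric statement holds on $W_0^+\setminus M^+$. The observation that drives the whole argument is that type A points satisfy $(dh_\alpha)_{x_0}\neq 0$, since $\chi'>0$ and $d\psi'\neq 0$ there; hence every point of $\{x\in X\setminus M\mid (dh_\alpha)_x=0\}$ is of type B.

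Granting this, part $(i)$ splits into two inclusions. For $K_\alpha\setminus M\subset\{(dh_\alpha)=0\}$, take $x_0\in K_\alpha\setminus M$: in either normal form $h_\alpha$ is locally $\chi\circ\psi'$ (resp.\ $c_1 f+c_2$) for a global element of some ${\rm PSH}^\infty(X,\theta)$, so $(dh_\alpha)_{x_0}=\chi'(\psi'(x_0))\,(d\psi')_{x_0}$ (resp.\ $c_1(df)_{x_0}$), which vanishes because $(d\psi')_{x_0}=0$ (resp.\ $(df)_{x_0}=0$) by Lemma \ref{lem:K_alpha}. For the reverse inclusion, let $x_0\in X\setminus M$ with $(dh_\alpha)_{x_0}=0$; by the dichotomy $x_0$ is of type B, so a neighborhood of $x_0$ is precisely the situation of the latter half of the proof of Lemma \ref{lem:natural2} with $(h_W,\theta_-)$ replaced by $(h_\alpha,\theta)$. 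I would then repeat that argument: assuming $x_0\notin K_\alpha$ produces, via Lemma \ref{lem:K_alpha}, some $\vp\in{\rm PSH}^\infty(X,\theta)$ with $(d\vp)_{x_0}\neq 0$, and using Lemma \ref{lem:fol_sing_lwconst}, Lemma \ref{lem:key_local} $(i)$ and {\bf Condition $(\heartsuit)$} one manufactures a smooth $\overline{\chi}$ with $\vp=\overline{\chi}\circ h_\alpha$ near $x_0$, whence $(d\vp)_{x_0}=\overline{\chi}'(0)\,(dh_\alpha)_{x_0}=0$, a contradiction.

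For part $(ii)$, fix $\widehat{\theta}\in{\rm SP}(\alpha)$; since the vanishing of $i_\mathcal{L}^*\widehat{\theta}$ is a pointwise condition on $T_{\mathcal{F}_\alpha}$, it suffices to verify it at each point of the regular part of $\mathcal{F}_\alpha$ and then extend along each leaf by continuity. At a type A point, $T_{\mathcal{F}_\alpha}=(\del h_\alpha)^\perp=(\del\psi')^\perp=T_{\mathcal{F}(\theta_0,\psi')}$ by Lemma \ref{lem:key_pointwise} $(i)$, so Proposition \ref{prop:key} $(ii)$ gives $\widehat{\theta}|_{T_{\mathcal{F}_\alpha}}=0$; at a type B regular point, $\mathcal{F}_\alpha$ coincides locally with $\mathcal{F}(\theta,\log(1+e^f))$ by Lemma \ref{lem:key_pointwise} $(i)$, exactly as in the last paragraph of the proof of Lemma \ref{lem:natural2}, and Proposition \ref{prop:key} $(ii)$ again yields the conclusion.

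The main obstacle is the reverse inclusion in $(i)$, and within it the bookkeeping needed to justify the type A / type B dichotomy and, crucially, that every singular point is genuinely of type B with a \emph{global} $f\in{\rm PSH}^\infty(X,\theta)$; this globality is exactly what permits {\bf Condition $(\heartsuit)$} to be applied to the connected components of the level sets $f^{-1}(r)$, which is the heart of the local parametrization argument borrowed from Lemma \ref{lem:natural2}. The analytic core of that argument (the rotational invariance $\chi(w^1)=\chi(\zeta_m w^1)$ of the transverse profile and the resulting factorization $\vp=\overline{\chi}\circ h_\alpha$) is already established there, so once the normal forms are in place the remaining work is organizational rather than computational.
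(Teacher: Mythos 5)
Your proposal is correct and follows essentially the same route as the paper: the paper's own proof simply observes that on each $W_0^-\setminus U^c_{\rm outside}$ the function $h_\alpha=h_U$ is exactly a function $h_W$ as in Lemma \ref{lem:natural} $(ii)$ and cites Lemma \ref{lem:natural2} $(ii)$ wholesale, whereas you inline that lemma's proof through your type A / type B dichotomy. The ingredients you assemble (Lemma \ref{lem:K_alpha}, Lemma \ref{lem:fol_sing_lwconst}, Lemma \ref{lem:key_local} $(i)$, Proposition \ref{prop:key} $(ii)$ and {\bf Condition $(\heartsuit)$}) are precisely the ones the paper's chain of citations ultimately rests on, so the two arguments coincide in substance.
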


\begin{proof}
We will show the lemma only on $W_0^-$, since one can run the same argument on $W_0^+$. 
Take $(\psi, U, I)\in \mathcal{U}(\theta_0)$. 
Then, as $h_U|_{W_0^-\setminus U^c_{\rm outside}}$ is a function such as $h_W$ in Lemma \ref{lem:natural} $(ii)$ for $W=W_0^-\setminus U^c_{\rm outside}$, the assertions $(i)$ and $(ii)$ on $W_0^-\setminus U^c_{\rm outside}$ (i.e. $K_\alpha\cap (W_0^-\setminus U^c_{\rm outside})=\{x\in W_0^-\setminus U^c_{\rm outside}\mid (dh_\alpha)_x=0\}$ and the $\mathcal{F}_\alpha|_{X\setminus U^c_{\rm outside}}$-leafwise triviality of elements of ${\rm SP}(\alpha)$) follows from Lemma \ref{lem:natural2}, which proves the lemma. 
\end{proof}

Finally, we show the following: 
\begin{lemma}\label{lem:M=Kess}
The set $M$ coincides with $K_\alpha^{\rm ess}$. 
\end{lemma}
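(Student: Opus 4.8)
The plan is to establish the two inclusions $K_\alpha^{\rm ess}\subseteq M$ and $M\subseteq K_\alpha^{\rm ess}$ separately, exploiting the identifications in Lemma \ref{lem:F_alpha_III} together with the extremal behaviour of $h_\alpha$ along $M$. For the inclusion $K_\alpha^{\rm ess}\subseteq M$ I would show that every connected component $K'$ of $K_\alpha$ with $K'\cap M=\emptyset$ is inessential. Such a $K'$ lies in $X\setminus M$, where $K_\alpha\setminus M=\{x\mid (dh_\alpha)_x=0\}$ by Lemma \ref{lem:F_alpha_III} $(i)$, so $K'$ is a compact connected component of the critical locus of the pluriharmonic function $h_\alpha$. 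First I would check that $h_\alpha$ is constant, say $\equiv c\in\mathbb{R}$, on $K'$ (the pluriharmonic analogue of Lemma \ref{lem:fol_sing_lwconst}: a local holomorphic primitive $F$ of $2\partial h_\alpha$ has $dF\equiv0$ on the critical set, hence is constant there). Since the critical values of $h_\alpha$ are discrete (by \cite{SS}, as in \S\ref{sec:fund_obs_F-adaptive}), I may pick $\varepsilon>0$ so that $[c-\varepsilon,c+\varepsilon]$ contains no critical value other than $c$, and let $W'$ be the connected component of $h_\alpha^{-1}((c-\varepsilon,c+\varepsilon))$ containing $K'$. Then $W'\cap M=\emptyset$, so $W'\cap K_\alpha=W'\cap\{dh_\alpha=0\}\subseteq h_\alpha^{-1}(c)$ is relatively compact in $W'$, while $\partial W'=h_\alpha^{-1}(\{c-\varepsilon,c+\varepsilon\})\cap\overline{W'}$ and $h_\alpha|_{\overline{W'}}$ is $\mathcal{F}_\alpha$-adaptive; this witnesses the inessentiality of $K'$.

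For the reverse inclusion $M\subseteq K_\alpha^{\rm ess}$ I would argue by contradiction. Suppose a component $K'$ of $K_\alpha$ with $K'\cap M\neq\emptyset$ were inessential, and fix a witnessing connected neighbourhood $W$ with $W\cap K_\alpha$ relatively compact in $W$ and an $\mathcal{F}_\alpha$-adaptive function $h$ on $\overline{W}$. Choose $x_0\in K'\cap M$, say $x_0\in M^-$; then $h_\alpha$ attains a local minimum $\mu$ at $x_0$ with $h_\alpha\geq\mu$ nearby and equality exactly on $M^-$ (Lemma \ref{lem:5.5} $(v)$), whereas $h(x_0)$ is a finite, \emph{non-extremal} value of $h$, because the extrema of $h$ are confined to $\partial W$ and $x_0\in W\setminus\partial W$ (relative compactness of $W\cap K_\alpha$ forces $K'\subseteq W$). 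On $X\setminus K_\alpha$ the foliation $\mathcal{F}_\alpha$ is nonsingular (Theorem \ref{thm:main1}) and $dh_\alpha\neq0$; since $h$ and $h_\alpha$ are both pluriharmonic and $\mathcal{F}_\alpha$-leafwise constant there, on each connected component $\Omega$ of $W\setminus K_\alpha$ they are affinely related, $h|_\Omega=a_\Omega h_\alpha|_\Omega+b_\Omega$ with $a_\Omega\in\mathbb{R}$ (by the argument of Lemma \ref{lem:S42first}). Reducing to a small ball $B$ about $x_0$ on which the local leaves of $\mathcal{F}_\alpha$ are connected, this becomes a single relation $h=a\,h_\alpha+b$ on $B$; as $h_\alpha\geq\mu$ on $B$ with equality at $x_0$, the value $h(x_0)$ would be an interior extremum of the non-constant pluriharmonic function $h$ (a minimum if $a>0$, a maximum if $a<0$, with $h$ locally constant if $a=0$), contradicting the maximum principle. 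Hence $K'$ is essential.

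To conclude $K_\alpha^{\rm ess}=M$ I would finally record that $M$ is relatively clopen in $K_\alpha$: it is closed as the preimage of the extreme values, and $K_\alpha\setminus M=\{dh_\alpha=0\}$ is relatively closed because the critical values of $h_\alpha$ stay bounded away from its extreme values (again by discreteness of critical values, so critical points cannot accumulate on $M$). Thus no component of $K_\alpha$ straddles $M$ and $K_\alpha\setminus M$, and the two inclusions above identify $K_\alpha^{\rm ess}$ with $M$. The hard part will be the essentiality direction: controlling the local geometry of $\mathcal{F}_\alpha$ at the singular point $x_0\in M$ so as to obtain the single-branch relation $h=a\,h_\alpha+b$ on a ball (equivalently, the local connectedness of the level sets of $h_\alpha$ near $M$) and thereby pin down the sign of $a$ for the maximum-principle step. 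I expect this to require the local normal form of Lemma \ref{lem:fol_sing_lwconst} together with the connectedness results of Lemmata \ref{lem:double_star} and \ref{lem:topological}.
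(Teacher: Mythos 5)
Your first inclusion ($K_\alpha^{\rm ess}\subseteq M$) is fine and is essentially a (welcome) expansion of the paper's one-line appeal to the adaptivity of $h_\alpha|_{\overline{X\setminus M}}$. The genuine gap is in the other direction. Your contradiction hinges on producing a single affine relation $h=a\,h_\alpha+b$ on a \emph{full ball} $B$ about $x_0\in M^-$ and then reading off an interior extremum of $h$ at $x_0$. But $h_\alpha$ is pluriharmonic only on $X\setminus M$; on all of $M^-$ it is frozen at the extreme value $\mu=\min h_\alpha$, which may be $-\infty$, and nothing in the paper excludes $M^-$ from having non-empty interior (the Question in \S 7 shows the author does not even know whether $M^-$ is pluripolar). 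So: if $\mu=-\infty$ the relation is meaningless at $x_0$; the relation can at best be derived separately on the connected components of $B\setminus K_\alpha$, whose constants $a_\Omega,b_\Omega$ need not agree and whose union need not fill a punctured neighborhood of $x_0$; and the inequality $h\geq a\mu+b$ is then only available on the closure of one such component, which is not enough to invoke the minimum principle at $x_0$. The tool you propose for closing this, the normal form of Lemma \ref{lem:fol_sing_lwconst}, does not apply: it concerns a pluriharmonic function on a full neighborhood of a critical point, whereas $M$ is precisely the locus where $h_\alpha$ ceases to be pluriharmonic — it is not a critical set of a pluriharmonic function but the degeneracy locus of $h_\alpha$. (A smaller unjustified step: you assert the critical values of $h_\alpha$ stay bounded away from its extreme values; \cite{SS} and \S\ref{sec:fund_obs_F-adaptive} only give discreteness in the open interval, with accumulation at the endpoints explicitly allowed.)

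The paper avoids comparing $h$ and $h_\alpha$ at $x_0$ altogether. It chooses $a<b$ so that $K_\alpha$ misses $h^{-1}((c,a))\cup h^{-1}((b,d))$, takes the component $D_\ve$ of $h^{-1}((a-\ve,b+\ve))$ containing $x_0$, and proves (Lemma \ref{lem:M=Kess_sub}, which itself requires the fibration dichotomy of Lemma \ref{lem:fibration} and {\bf Condition $(\heartsuit)$}) that $h_\alpha$ is constant $\equiv r_j$ on each boundary component $H_j$ of $D_\ve$; the connectedness statements of Lemmata \ref{lem:double_star} and \ref{lem:5.5} then force $N=2$ and $D_\ve\subset h_\alpha^{-1}((r_1,r_2))$ with $r_1,r_2$ attained, non-extremal values, so $h_\alpha(x_0)$ is not extremal — contradicting $x_0\in M$ via Lemma \ref{lem:5.5} $(v)$. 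All the analysis takes place on $\partial D_\ve$, inside the region where both $h$ and $h_\alpha$ are nonsingular, which is exactly what your local argument at $x_0$ cannot arrange. To repair your proof you would need to replace the local affine comparison at $x_0$ by such a boundary/topological argument, including a proof of the constancy of $h_\alpha$ on the level sets of $h$ near $\partial D_\ve$, which is where the real work (the fibration alternative) lies.
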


\begin{proof}
As $h_\alpha|_{\overline{X\setminus M}}$ is an $\mathcal{F}_\alpha$-adaptive function, $K_\alpha^{\rm ess}\subset M$ holds. 
In what follows, we show that $x\in K_\alpha^{\rm ess}$ for any point $x\in M$. 
Assuming $x\not\in K_\alpha^{\rm ess}$, we will prove it by contradiction. 

As it follows from Lemma \ref{lem:MinK} that $x\in K_\alpha$, this assumption implies that, for the connected component $K'$ of $K_\alpha$ which contains $x$, there exists a connected open neighborhood $W$ of $K'$ in $X$ such that $W\cap K_\alpha$ is a relatively compact subset of $W$ and that there exists an $\mathcal{F}_\alpha$-adaptive function $h\colon \overline{W}\to [-\infty, +\infty]$. 
Let $c:=\textstyle\min_{\overline{W}}h$ and $d:=\textstyle\max_{\overline{W}}h$. 
As $W\cap K_\alpha$ is a relatively compact subset of $W$, $h(W\cap K_\alpha)$ is a relatively compact subset of $(c, d)$. 
Therefore there exist real numbers $a$ and $b$ such that $c<a<b<d$ and that both $h^{-1}((c, a))\cap K_\alpha$ and $h^{-1}((b, d))\cap K_\alpha$ are empty. 
Note that then it follows that $x\in h^{-1}((a, b))$. 

As is followed from the arguments in \S \ref{sec:fund_obs_F-adaptive}, 
$h$ is a pluriharmonic proper submersion on $h^{-1}((a-\ve, a))\cup h^{-1}((b, b+\ve))$ for a small positive number $\ve$. 
Denote by $D_\ve$ the connected component of $h^{-1}((a-\ve, b+\ve))$ which contains $x$. 
Let $H_1, H_2, \dots, H_N$ be the connected components of the boundary $\del D_\ve$. 
By Lemma \ref{lem:M=Kess_sub} below, it follows by replacing $\ve$ with a smaller number that $h_\alpha|_{H_j}\equiv r_j$ for some $r_j\in \mathbb{R}$ for each $j=1, 2, \dots, N$. 
Note that $h_\alpha^{-1}(r_j)=H_j$ holds, since $h_\alpha^{-1}(r_j)$ is connected by 
Lemma \ref{lem:double_star} $(ii)$ and Lemma \ref{lem:5.5} $(iii)$. 

When $N=1$, $\del D_\ve$ is connected and thus $h|_{\del D_\ve}$ is constant, which contradicts to the maximum principle since $h|_{D_\ve}$ is a non-constant pluriharmonic function. 
Thus one has $N>1$. 
It follows from the connectedness of $D_\ve$ and Lemma \ref{lem:double_star} $(i)$ that $N=2$ and $D_\ve\subset h_\alpha^{-1}((r_1, r_2))$. 
Therefore one have that there exists an open neighborhood of $x$ on which $h_\alpha$ is a non-constant pluriharmonic, which contradicts to the assumption $x\in M$ by Lemma \ref{lem:5.5} $(v)$. 
\end{proof}

\begin{lemma}\label{lem:M=Kess_sub}
Let $a, b, \ve$, and $h\colon \overline{W}\to [-\infty, +\infty]$ be as in the proof of Lemma \ref{lem:M=Kess}. 
Then, for any $r\in (a-\ve, a)\cup (b, b+\ve)$ and for any connected component $A$ of $h^{-1}(r)$, $h_\alpha|_A$ is constant. 
\end{lemma}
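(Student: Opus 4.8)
The plan is to exhibit, near $A$, a $\theta_0$-plurisubharmonic function $\psi$ whose regular level set containing $A$ coincides with $A$, so that the constancy of $h_\alpha$ on $A$ becomes an instance of {\bf Condition $(\heartsuit)$}. By symmetry it suffices to treat $r\in(a-\ve,a)$. Since $A\subseteq h^{-1}((c,a))$ avoids $K_\alpha$, and hence $M$ by Lemma \ref{lem:MinK}, the function $h_\alpha$ is pluriharmonic without critical points near $A$ (Lemma \ref{lem:F_alpha_III} (i)), $A$ is compact by the properness of $h$ on the collar, and $A$ is a union of leaves of $\mathcal{F}_\alpha$.

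First I would build the comparison function. Fix $p\in A$; by Lemma \ref{lem:K_alpha} there is $\psi\in\mathrm{PSH}^\infty(X,\theta_0)$ with $(d\psi)_p\neq0$, and replacing $\psi$ by $\log(1+e^\psi)$ as in \S\ref{sec:3} I may assume $(\theta_0+\sqrt{-1}\ddbar\psi)_x\neq0$ for all $x\in U_\psi$. Let $A'$ be the connected component of $\psi^{-1}(\psi(p))$ through $p$, where $\psi(p)$ is taken to be a regular value (possible by Sard, perturbing $p$ within $A$). Since $K_\alpha\subseteq\{d\psi=0\}$ and $\psi$ is regular along $A'$, we get $A'\subseteq U_\psi\subseteq X\setminus M$. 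The key local step is that $\mathcal{F}_\alpha$ coincides with $\mathcal{F}(\theta_0,\psi)$ near $A'$: at each $q\in A'$ the form $\widehat\theta_q:=(\theta_0+\sqrt{-1}\ddbar\psi)_q$ is semi-positive, nonzero, and satisfies $\widehat\theta_q\wedge\widehat\theta_q=0$ by Lemma \ref{lem:2.3}, so it has rank exactly one and its kernel is the codimension-one space $T_q\mathcal{F}(\theta_0,\psi)=(\del\psi)^\perp$ (Lemma \ref{lem:key_pointwise} (i)); since $\widehat\theta\in\mathrm{SP}(\alpha)$ is zero along the leaves of $\mathcal{F}_\alpha$ (Lemma \ref{lem:F_alpha_III} (ii)), the codimension-one space $T_q\mathcal{F}_\alpha$ is contained in $\ker\widehat\theta_q$ and therefore equals it.

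Next I would invoke the rigidity of {\bf Condition $(\heartsuit)$} twice. Because $h_\alpha$ is $C^\infty$, pluriharmonic, and $\mathcal{F}(\theta_0,\psi)$-leafwise constant on the whole component $A'$ of $\psi^{-1}(\psi(p))$, {\bf Condition $(\heartsuit)$} forces $h_\alpha|_{A'}$ to be constant; as $dh_\alpha$ and $d\psi$ are nonzero and parallel, $h_\alpha=\chi_2\circ\psi$ for a strictly monotone $\chi_2$ near $A'$. Running the same argument for $h$, which is $C^\infty$, pluriharmonic and $\mathcal{F}_\alpha=\mathcal{F}(\theta_0,\psi)$-leafwise constant, {\bf Condition $(\heartsuit)$} gives $h|_{A'}\equiv r$, so $A'\subseteq h^{-1}(r)$ and by connectedness $A'\subseteq A$. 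Since $r$ is a regular value of $h$ and $\psi(p)$ a regular value of $\psi$, both $A$ and $A'$ are smooth hypersurfaces, so invariance of domain shows $A'$ is open and closed in the connected set $A$; thus $A=A'$ and $h_\alpha|_A=h_\alpha|_{A'}$ is constant. (Feeding $h=\chi_1\circ\psi$ and $h_\alpha=\chi_2\circ\psi$ into Lemma \ref{lem:S42first} recovers the affine relation $h=c_1h_\alpha+c_2$, making the constancy transparent.)

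The main obstacle is the application of {\bf Condition $(\heartsuit)$} to $h$: this requires the entire component $A'$ of $\psi^{-1}(\psi(p))$ to lie in the domain $\overline W$ of $h$, which is not automatic, since a priori $A'$ could leave $\overline W$ through $\del W$, where $h$ attains only its extreme values $\min_{\overline W}h$ and $\max_{\overline W}h$. I would handle this by shrinking $\ve$ and choosing $\psi$ so that the relevant level set is confined to the collar: using that the regular level sets of $h_\alpha$ are connected (Lemma \ref{lem:double_star} (ii) together with Lemma \ref{lem:5.5} (iii)) and coincide near $A'$ with the level sets of $\psi$, one identifies $A'$ with a connected level set of $h_\alpha$ and controls its location from the product structure $h^{-1}((a-\ve,a))\cong h^{-1}(r)\times(a-\ve,a)$ of the collar and the boundary normalization $h^{-1}(\{\max_{\overline W}h,\min_{\overline W}h\})=\del W$. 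Confirming that $A'$ stays inside $\overline W$ — equivalently, that $h$ is genuinely constant along the full $\psi$-level component — is precisely where the global hypotheses enter, the purely local information $\del h\parallel\del h_\alpha$ being insufficient to pin down the (holomorphic, hence constant only if real) factor of proportionality.
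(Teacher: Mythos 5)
Your strategy is genuinely different from the paper's, and it does not close. The paper argues by contradiction: if $h_\alpha|_A$ were non-constant, then $h_\alpha|_A$ is a non-constant $\mathcal{F}_\alpha$-leafwise constant $C^\infty$ function on the compact, leaf-saturated set $A$, and the argument of Lemma \ref{lem:fibration} (which only uses these properties of $A$, \emph{not} that $A$ is a level set of a global quasi-psh function) produces a fibration $\Phi\colon X\to R$ with a fiber inside $A$; Lemma \ref{lem:hodge_index_thm} then upgrades $\Phi^*\omega_R$ to an element of ${\rm SP}(\alpha)$, and the pullbacks of functions on $R$ give level sets violating {\bf Condition $(\heartsuit)$}. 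The whole point of that detour is that {\bf Condition $(\heartsuit)$} is only formulated for components of level sets of elements of ${\rm PSH}^\infty(X,\theta)$, whereas $A$ is a component of a level set of the merely locally defined $h$.

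Your proposal tries to bridge exactly that mismatch by identifying $A$ with a component $A'$ of a level set of some $\psi\in{\rm PSH}^\infty(X,\theta_0)$, and this is where it breaks. First, a secondary gap: making $\psi(p)$ a regular value ``by perturbing $p$ within $A$'' only works if $\psi|_A$ is non-constant near $p$; if $\psi|_A$ is locally constant (which nothing excludes --- it is the same kind of statement you are trying to prove for $h_\alpha$), the perturbation does not move $\psi(p)$ and Sard gives you nothing. Second, and decisively, the step $A=A'$ requires applying {\bf Condition $(\heartsuit)$} to $h$ on $A'$, hence $A'\subset W$; you correctly flag this as the main obstacle, but the proposed resolution (connectedness of regular $h_\alpha$-level sets plus the collar product structure of $h$) does not establish it. Knowing $A'=h_\alpha^{-1}(s)$ gives no control on whether that global level set stays inside the neighborhood $W$ of the component $K'$, and locally the information $\del h = G\cdot\del h_\alpha$ with $G$ holomorphic (equivalently $h=\mathrm{Re}\,F_1$, $h_\alpha=\mathrm{Re}(\Phi\circ F_1)$ on a foliation box) forces the level sets of $h$ and $h_\alpha$ to coincide only when the proportionality factor is real --- which is precisely the global rigidity the lemma asserts. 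So the argument is circular at its crux. The first half of your proposal (that $h_\alpha$ is constant on each regular level component $A'$ of a global $\psi$) is a correct and potentially useful observation, but it concerns the wrong family of hypersurfaces; to conclude for $A$ itself you would still need the fibration alternative, i.e.\ essentially the paper's proof.
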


\begin{proof}
Assume that there exists a connected component $A$ of $h^{-1}(r)$ such that $h_\alpha|_A$ is not constant for some $r\in (a-\ve, a)\cup (b, b+\ve)$. 
As $h$ is $\mathcal{F}_\alpha$-leafwise constant, 
one can run the same argument as in the proof of Lemma \ref{lem:fibration} to obtain a leaf $Y\subset A$ of $\mathcal{F}_\alpha$ and a surjective holomorphic map $\Phi\colon X\to R$ to a compact Riemann surface $R$ such that $Y$ is a fiber of $\Phi$. 
Take a K\"ahler form $\omega_R$ of $R$ and set $\theta:=\Phi^*\omega_R$. 
Then it clearly holds that $d\theta\equiv 0$ and $\theta\wedge \theta\equiv 0$. 
Moreover, as $\{\theta\}=B\cdot c_1([Y])$ holds (since $Y$ is a fiber of $\Phi$) and $\theta_0|_Y\equiv 0$ (by Lemma \ref{lem:F_alpha_III} $(ii)$), it holds that $\{\theta\wedge \theta_0\}=0\in H^{2, 2}(X, \mathbb{C})$. 
Therefore one obtains from Lemma \ref{lem:hodge_index_thm} that we may assume $\theta\in {\rm SP}(\alpha)$ by multiplying some positive constant to $\omega_R$ if necessary. 

Let $\psi\colon R\to \mathbb{R}$ be a function of class $C^\infty$. 
Then $\delta\cdot \psi\in {\rm PSH}^\infty(R, \omega_R)$ holds for sufficiently small positive $\delta$, since $\omega_R>0$. 
Take a regular value $r$ of $\delta\cdot \psi$. 
Then $\vp:=\delta\cdot \psi\circ \Phi$, which is clearly an element of ${\rm PSH}^\infty(X, \theta)$, clearly satisfies the condition that any connected component of $\vp^{-1}(r)$ admits a non-constant $\mathcal{F}(\theta, \vp)$-leafwise contant function of class $C^\infty$, which contradicts to {\bf Condition $(\heartsuit)$}. 
\end{proof}

\subsection{End of the proof}
By the arguments in \S \ref{sec:5.1} and the previous subsection, 
one can construct a foliation $\mathcal{F}_\alpha$ as in Theorem \ref{thm:main1} on the domain $X\setminus K_\alpha^{\rm ess}$ in each of the cases. 

In \S \ref{sec:5.1}, we saw that it is sufficient for proving Theorem \ref{thm:main2} to show the assertion in Case I\!I\!I in this theorem by assuming 
{\bf Condition $(\heartsuit)$} and {\bf Condition $(\diamondsuit)$}, which is done in the previous subsection. 

The uniqueness of $\mathcal{F}_\alpha$ is clear in Case I. 
In the other cases, $\mathcal{F}_\alpha|_W$ is unique for a domain $W$ such that $(\psi, W, \psi(W))\in \mathcal{U}(\theta_0)$ for some $\psi\in {\rm PSH}^\infty(X, \theta_0)$. 
Thus the uniqueness is shown by considering the identity theorem by regarding a holomorphic foliation on $X\setminus K_\alpha^{\rm ess}$ as a holomorphic section of the projective bundle $\mathbb{P}(T_{X\setminus K_\alpha^{\rm ess}})\to X\setminus K_\alpha^{\rm ess}$ by considering the tangent bundle of the foliation, which proves Theorem \ref{thm:main1}. \qed

\section{Semi-positivity of the line bundle associated with an effective divisor with flat normal bundle and Ueda's classification} \label{sec:6}

Let $X$ be a connected compact K\"ahler manifold of dimension $n$, 
and $D$ be an effective divisor of $X$ whose support is smooth or has only simple normal crossing singularities (for simplicity). 
Assume $[D]|_{Z_\lambda}$ is unitary flat line bundle for any $\lambda$, where $D=\textstyle\sum_{\lambda}m_\lambda Z_\lambda$ is the irreducible decomposition ($m_\lambda\in\mathbb{Z}_{>0}$, $Z_\lambda\subset X$ is a reduced irreducible hypersurface). 
Then the class $\alpha:=2\pi c_1([D])$ satisfies ${\rm nd}(\alpha)=1$, since 
\[
(\alpha^2, \{\eta\}) = 2\pi\cdot \sum_{\lambda}m_\lambda\int_{Z_\lambda}\sqrt{-1}\Theta_h\wedge \eta =0
\]
holds for any $d$-closed $(n-2, n-2)$-form $\eta$ of class $C^\infty$, where $h$ is a Hermitian metric of $[D]$ and $\Theta_h$ is the Chern curvature tensor of $h$. 

In what follows, assume that $[D]$ is semi-positive: i.e. ${\rm SP}(\alpha)\not=\emptyset$. 
Take a $C^\infty$ Hermitian metric $h$ on $[D]$ such that $\theta:=\sqrt{-1}\Theta_h\in {\rm SP}(\alpha)$. 
Let us consider the function $F\colon X\setminus |D|\to \mathbb{R}$, where $|D|$ is the support $\textstyle\bigcup_\lambda Z_\lambda$ of $D$, defined by 
$F:=-\log |s_D|_h^2$ for the canonical section $s_D\in H^0(X, [D])$. 
Then, by a simple calculation, one has that 
the function $\psi=\log(1+e^F)-F$ is an element of ${\rm PSH}^\infty(X, \theta)$. 
Note that 
\[
\psi = \log(1+|s_D|_h^2) = |s_D|_h^2 + o(|s_D|_h^2)
\]
holds as a point approaches to $|D|$, from which it follows that $|D|=\{\psi=0\}$ and that, for a sufficiently small positive number $\ve$, any point of $\{0<\psi<\ve\}$ is a regular point of $\psi$. 

By using this function $\psi$, we can show Corollary \ref{cor:main} as follows. 

\begin{proof}[Proof of Corollary \ref{cor:main}]
Consider the case where $D=Y$ is a non-singular connected hypersurface. 
When $N_{Y/X}^m$ is holomorphically trivial for some positive integer $m$, the assertion has already been shown in \cite[Theorem 1.1 $(i)$]{K2020}. 
Assume that $N_{Y/X}^m$ is not holomorphically trivial for any positive integer $m$. 
In this case, by \cite[Theorem 1.4]{K2020}, it is sufficient to show the existence of a connected open neighborhood $\Omega$ of $Y$ such that $\del \Omega$ is a compact Levi-flat hypersurface of class $C^2$. 
When the assertion $(b)$ of Theorem \ref{thm:main_sec3} holds, we can show the existence of $\Omega$ by letting $\Omega:=\{\psi<\ve\}$ for a sufficiently small positive number $\ve$. 
From now on, assuming that the assertion $(a)$ of Theorem \ref{thm:main_sec3} holds, we will prove the assertion by contradiction. 
If $\Phi(Y)=R$, $Y$ intersects all the fibers of $\Phi$, which contradicts to the fact that there exits a fiber of $\Phi$ included in $\{\psi=\ve\}$ for a small positive number $\ve$, which follows from the argument in the proof of Theorem \ref{thm:main_sec3}. 
Thus one has that $\Phi(Y)=\{p\}$ holds for a point $p\in R$. 
Therefore $N_{Y/X}^m$ is holomorphically trivial, where $m$ is the integer such that $\Phi^*\{p\}=mY$ holds as divisors, which contradicts to the assumption. 
\end{proof}

\begin{observation}\label{obs:divisor_main}
Consider the case where $D=Y$ is a non-singular connected hypersurface. 
By Theorem \ref{thm:main_sec3} for $\mathcal(\theta, \psi)$, it follows by the observation above that $\mathcal{F}_\alpha$ is a non-singular holomorphic foliation on $\{0<\psi<\ve\}$. 
Moreover, as is clear by considering the Monge--Amp\`ere foliation for $\sqrt{-1}\ddbar \psi$, $\mathcal{F}_\alpha$ can be $C^\infty$-smoothly extended to $\{\psi<\ve\}$ by adding $Y$ as a leaf. 
Therefore it follows from \cite[Lemma 4.4]{K2020} that $\mathcal{F}_\alpha$ can be extended to $\{\psi<\ve\}$ as a non-singular holomorphic foliation. 
In \cite{K2020}, we considered such a foliation and showed the linearizability of the holonomy along $Y$ by applying P\'erez-Marco's theory \cite{P}. 
We expect that the same argument makes sense even when $|D|$ admits only ``mild" singularities in some sense, however it seems that more precise study on the holonomy along singular leaves is needed for realizing such an argument. 
\end{observation}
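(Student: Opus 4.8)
The plan is to prove the three assertions of the observation in order, working throughout in a tubular neighborhood $\{\psi<\ve\}$ of $Y$ where the explicit shape of $\psi$ recorded at the beginning of \S\ref{sec:6} is available. First I would note that, since $\psi=\log(1+|s_D|_h^2)$ behaves like $|s_D|_h^2$ as one approaches $|D|=Y$, every value in $(0,\ve)$ is a regular value of $\psi$ once $\ve$ is small, so $\{0<\psi<\ve\}\subset U_\psi$. On this set $\mathcal{F}_\alpha$ agrees with the foliation $\mathcal{F}(\theta,\psi)$ of \S\ref{sec:3}, which is non-singular and holomorphic by Theorem \ref{thm:main_sec3}; this gives the first assertion.

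For the extension across $Y$, I would work with the Monge--Amp\`ere foliation of $\sqrt{-1}\ddbar\psi$ on all of $\{\psi<\ve\}$. Choosing local coordinates $(w^1,\dots,w^n)$ with $Y=\{w^1=0\}$ and writing $|s_D|_h^2=|w^1|^2e^{-\phi}$ for a smooth local weight $\phi$, one has $\psi=|w^1|^2e^{-\phi}+o(|w^1|^2)$, and the leading part of $\sqrt{-1}\ddbar\psi$ is $\sqrt{-1}e^{-\phi}\,dw^1\wedge d\overline{w^1}$, which is transversally non-degenerate along $Y$. Hence the eigenvalue-zero distribution of $\sqrt{-1}\ddbar\psi$ is a smooth codimension-one distribution on the whole of $\{\psi<\ve\}$; it agrees with $T_{\mathcal{F}_\alpha}=(\del\psi)^\perp$ off $Y$ (by Lemma \ref{lem:key_pointwise} $(i)$) and has $Y$ itself as an integral leaf. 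This produces the $C^\infty$-smooth extension of $\mathcal{F}_\alpha$ to $\{\psi<\ve\}$ with $Y$ as a leaf.

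Finally I would upgrade the regularity from $C^\infty$ to holomorphic. The extended foliation is already holomorphic on $\{0<\psi<\ve\}$ and is $C^\infty$-smooth across the smooth leaf $Y$, so \cite[Lemma 4.4]{K2020} applies verbatim and yields a non-singular holomorphic foliation on $\{\psi<\ve\}$, completing the argument.

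The hard part will be the second step: making precise that the kernel distribution of $\sqrt{-1}\ddbar\psi$ extends smoothly and integrably across the locus $Y=\{d\psi=0\}$, where the defining form $\del\psi$ itself degenerates. The key point to verify is that, although $\del\psi$ vanishes to first order along $Y$, its renormalization (equivalently, the conormal direction of the Monge--Amp\`ere foliation) extends smoothly and without zeros across $Y$; this is precisely what guarantees both the smoothness of the extended foliation and that $Y$ occurs as an honest leaf rather than a singular stratum. Once this local normal-form computation is secured, the global statement and the holomorphic upgrade follow formally.
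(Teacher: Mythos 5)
Your proposal follows exactly the route the paper takes: regularity of the values of $\psi$ in $(0,\ve)$ plus Theorem \ref{thm:main_sec3} for the first claim, the Monge--Amp\`ere foliation of $\sqrt{-1}\ddbar\psi$ with the local computation $\sqrt{-1}\ddbar\psi|_Y=\sqrt{-1}e^{-\phi}\,dw^1\wedge d\overline{w^1}$ for the smooth extension across $Y$, and \cite[Lemma 4.4]{K2020} for the holomorphic upgrade, so it is essentially the paper's argument with the second step usefully made explicit. The only ingredient you should state to make the ``hence'' in that step airtight is that $(\sqrt{-1}\ddbar\psi)^{\wedge 2}\equiv 0$ (Lemma \ref{lem:2.3} $(ii)$), which bounds the rank of $\sqrt{-1}\ddbar\psi$ above by one everywhere and, combined with your transversal non-degeneracy along $Y$, gives the constant rank needed for the kernel to be a smooth codimension-one distribution.
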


Assume that $D=Y$ is a non-singular connected hypersurface. 
Then, if $[Y]$ is semi-positive, $\#{\rm SP}(\alpha)>1$ holds since ${\rm PSH}^\infty(X, \theta)\not=\mathbb{R}$. 
Thus it follows that either of the assertions in Case I, I\!I, or I\!I\!I of Theorem \ref{thm:main2} holds. 
As is seen in the proof of Corollary \ref{cor:main}, $(X, \alpha)$ is in Case I if $N_{Y/X}^m$ is holomorphically trivial for some positive integer $m$ by \cite[Theorem 1.1 $(i)$]{K2020}. 
Let us consider the case where $N_{Y/X}^m$ is not holomorphically trivial for any positive integer $m$. 
In this case, it follows from Corollary \ref{cor:main} that one can choose a Hermitian metric $h$ of $[Y]$ such that $\theta=\sqrt{-1}\Theta_h$ is identically zero on a neighborhood $V$ of $Y$: i.e. $h|_V$ is {\it a flat metric} on $[Y]|_V$. 
In this case, the function $F=-\log|s_D|_h^2$ is pluriharmonic on $V\setminus Y$.  
As it follows from the argument in the proof of Corollary \ref{cor:main} that any $\mathcal{F}_\alpha$-leafwise constant function on a neighborhood of $Y$ is $\psi$-fiberwise constant, 
one has the following by the same argument as in the proof of Lemma \ref{lem:S42first} or Lemma \ref{lem:4.3} $(iii)$: for a sufficiently small number $\ve$, any pluriharmonic function $h$ on $\{0<\psi<\ve\}$ satisfies $h=c_1F|_{\{0<\psi<\ve\}}+c_2$ for constants $c_1, c_2\in \mathbb{R}$. 
Thus $Y\subset K_\alpha^{\rm ess}$, from which it follows that 
$(X, \alpha)$ is in Case I\!I\!I. 


\section{Examples} \label{sec:7}

Here we give some examples. 

\subsection{Non semi-positive case}
The condition ${\rm nd}(\alpha)=1$ does not imply the existence of a semi-positive representative in general. 
Indeed, as is followed from Corollary \ref{cor:main}, for a connected non-singular hypersurface $Y$ of a connected compact K\"ahler manifold $X$ with unitary flat normal bundle, 
$\alpha:=c_1([Y])$ satisfies ${\rm nd}(\alpha)=1$ and ${\rm SP}(\alpha)=\emptyset$ if $[Y]|_V$ is not unitary flat for any neighborhood $V$ of $Y$: i.e. the pair $(Y, X)$ is of class $(\alpha)$ or $(\gamma)$ in Ueda's classification \cite{U}. Serre's example gives a typical concrete example, see also \cite[Example 1.7]{DPS} and \cite{K2015}. 

\subsection{Suspension construction}
Many interesting examples can be constructed by considering the suspension in the following manner. 
Let $Z$ be a connected complex manifold and $F$ be a connected compact Riemann surface. 
Fix a representation $\rho\colon \pi_1(Z, *)\to {\rm Aut}(F)$ of the fundamental group of $Z$, where ${\rm Aut}(F)$ is the group of holomorphic automorphisms of $F$. 
Then one can construct a complex manifold $X$ which has a locally trivial $F$-bundle structure over $Z$ whose monodromy coincides with $\rho$ by letting $X:=F\times \widetilde{Z}/\sim_\rho$, 
where $\widetilde{Z}$ is the universal covering of $Z$ and $\sim_\rho$ is the relation such that $(w, z)\sim_\rho (w', z')$ holds for $w, w'\in F$ and $z, z'\in \widetilde{Z}$ if and only if there exists an element $\gamma\in\pi_1(Z, *)$ such that $w'=\rho(\gamma)(w)$ and $z'=\gamma\cdot z$. 
Assume that there exists a K\"ahler class $\alpha_F$ of $F$ which is invariant by any element of ${\rm Image}\,\rho$. 
Then it is clear that the class ${\rm Pr}_1^*\alpha_F$ of $F\times \widetilde{Z}$ induces a class $\alpha$ of $X$ such that ${\rm nd}(\alpha)=1$, where ${\rm Pr}_1\colon F\times \widetilde{Z}\to F$ is the first projection. 
In this subsection, we give some examples of such $(X, F, Z, \rho, \alpha)$. 

\begin{example}
Consider the case where both $Z$ and $F$ are an elliptic curves. 
For simplicity, we assume that $F\cong\mathbb{C}/\langle 1, \sqrt{-1}\rangle$. 
Consider the representation $\rho\colon \langle 1, \tau\rangle\to {\rm Aut}(F)$ defined by 
$\rho(1)={\rm id}_F$ and $\rho(\tau)$ is the parallel transformation induced by $+(a+b\sqrt{-1})\colon \mathbb{C}\to \mathbb{C}$, where $\tau$ is the modulus of $Z$, ${\rm id}_F$ is the identity map, and $a, b\in \mathbb{R}$. 
Note that $X$ is a complex torus $\mathbb{C}^2_{(w, z)}/\Lambda$ in this case, where $\Lambda$ is the lattice 
\[
\left\langle
 \left(
  \begin{array}{c}
     1 \\
     0
  \end{array}
 \right),\ 
 \left(
  \begin{array}{c}
     \sqrt{-1} \\
     0
  \end{array}
 \right),\ 
 \left(
  \begin{array}{c}
     0 \\
     1
  \end{array}
 \right),\ 
 \left(
  \begin{array}{c}
     a+b\sqrt{-1} \\
     \tau
  \end{array}
 \right)
\right\rangle. 
\]
Let $\theta$ be the form $\sqrt{-1}dw\wedge d\overline{w}$ on $X$ and $\alpha\in H^{1, 1}(X, \mathbb{R})$ be the class which is represented by $\theta$. 
Let $\mathcal{F}_\alpha$ be the foliation by curves which is determined by the eigenvectors which belongs to the eigenvalue zero of $\theta$. 
As is easily seen, each leaf of $\mathcal{F}_\alpha$ is locally defined by $\{w=\text{constant}\}$, from which it follows that $\mathcal{F}_\alpha$ is a non-singular holomorphic foliation. 

First, let us consider the case where both $a$ and $b$ are rational. 
In this case, $R:=\mathbb{C}_w/\langle 1, \sqrt{-1}, a+b\sqrt{-1}\rangle$ is an elliptic curve. 
By considering the morphism $\Phi\colon X\to R$ which is induced by the projection $\mathbb{C}^2_{(w, z)}\to \mathbb{C}_w$, it follows that $\#{\rm SP}(\alpha)>1$ and $(X, \alpha)$ is in Case I of Theorem \ref{thm:main2} in this case. 

Next, let us consider the case where $a$ or $b$ is irrational and $a = q b$ or $b = q a$ holds for some rational number $q$. 
For simplicity, here we restrict ourselves to the case $a=0$ and $b\in\mathbb{R}\setminus \mathbb{Q}$. 
In this case, consider the function $\pi\colon X\to \mathbb{R}/\mathbb{Z}$ induced by the function $\mathbb{C}^2_{(w, z)}\ni (w, z)\mapsto {\rm Re}\,w\in \mathbb{R}$. 
Then, for any $t\in \mathbb{R}/\mathbb{Z}$, the fiber $H_t:=\pi^{-1}(t)$ is a real analytic compact Levi-flat hypersurface of $X$ such that any leaf $\mathcal{L}\subset H_t$ of $\mathcal{F}_\alpha$ is dense in $H_t$. Note that any $\mathcal{F}_\alpha$-leafwise constant continuous function on $H_t$ is constant in this case. 
Take a non-constant function $\psi\colon \mathbb{R}/\mathbb{Z}\to \mathbb{R}$ of class $C^\infty$. 
Then $\vp:=\ve\cdot (\psi\circ \pi)$ is an element of ${\rm PSH}^\infty(X, \theta)$ for a sufficiently small positive number $\ve$, from which it follows that $\#{\rm SP}(\alpha)>1$. 
As is followed from $K_\alpha=\emptyset$ (or the fact that the complement of any $H_t$ is connected), $(X, \alpha)$ is in Case I\!I of Theorem \ref{thm:main2} in this case. 

Finally, let us consider the case where $a = r b$ or $b = r a$ holds for some irrational number $r$. 
As each leaf of $\mathcal{F}_\alpha$ is dense in $X$, it follows that ${\rm SP}(\alpha)=\{\theta\}$ in this case. \qed
\end{example}

\begin{example}
Consider the case where $F$ is the projective line $\mathbb{P}^1$. 
Here let us consider one of the simplest cases: 
The case where $Z\cong \mathbb{C}/\langle 1, \tau\rangle$ is an elliptic curve and 
the representation $\rho\colon \langle 1, \tau\rangle\to {\rm Aut}(\mathbb{P}^1)$ is the one which is defined by 
$\rho(1)={\rm id}_{\mathbb{P}^1}$ and $\rho(\tau)=U$ for some unitary matrix $U$. 
Note that $X$ is the ruled surface $\mathbb{P}(E_\rho)$ in this case, where $E_\rho$ is the unitary flat vector bundle on $Z$ of rank $2$ which corresponds to the representation $\rho$. 
In this case, consider the fiberwise Fubini--Study form 
$\theta:=\textstyle\frac{\sqrt{-1}}{2\pi}\ddbar\log(|x|^2+|y|^2)$ 
on $X$, where $[x; y]$ is a homogeneous coordinate of a fiber. 
Note that $\theta$ represents the first Chern class $\alpha$ of the relative $\mathcal{O}_{\mathbb{P}^1}(1)$-bundle. 
Let $\mathcal{F}_\alpha$ be the foliation by curves which is determined by the eigenvectors which belongs to the eigenvalue zero of $\theta$. 
As is easily observed, each leaf of $\mathcal{F}_\alpha$ is locally defined by $\{[x; y]=\text{constant}\}$. 
Therefore $\mathcal{F}_\alpha$ is a non-singular holomorphic foliation. 
By choosing $(x, y)$ suitably, we may assume that $U$ corresponds to the unitary rotation $w\mapsto \lambda\cdot w$ for some $\lambda\in {\rm U}(1)$, where $w:=x/y$ is the non-homogeneous coordinate. 

When $\lambda^m=1$ for some positive integer $m$, $\mathcal{F}_\alpha$ coincides with the foliation associated with the fibration $\Phi\colon X\to \mathbb{P}^1$ which is defined by $\Phi(w, z):=w^m$. Thus $(X, \alpha)$ is in Case I. 

When $\lambda^m\not=1$ for any positive integer $m$, consider the zero section $Y_0$ defined by $\{w=0\}$ and the $\infty$-section $Y_\infty$ defined by $\{w=\infty\}$ of the ruled surface $X$. 
As neither $N_{Y_0/X}^m$ nor $N_{Y_\infty/X}^m$ is holomorphically trivial for any positive integer $m$, it follows from the observation in the previous section that $(X, \alpha)$ is in Case I\!I\!I. 
In this example, it follows by considering the function $h_\alpha\colon X\to [-\infty, +\infty]$ defined by $h_\alpha(w, z):=\log|w|$ that $K_\alpha=K_\alpha^{\rm ess}=Y_0\cup Y_\infty$ holds. 
\qed
\end{example}

\subsection{The blow-up of $\mathbb{P}^2$ at nine points and K3 surfaces constructed by gluing}

Here we give an example of a compact K\"ahler surface $X$ and a class $\alpha\in H^{1, 1}(X, \mathbb{R})$ with ${\rm nd}(\alpha)=1$ and $\#{\rm SP}(\alpha)>1$ which is in Case I\!I\!I such that the foliation $\mathcal{F}_\alpha$ on $X\setminus K_\alpha^{\rm ess}$ never can be holomorphically extended to $X$. 

Let $C$ be an smooth cubic of a projective plane $\mathbb{P}^2$. 
Take nine points $p_1, p_2, \dots, p_9\in C$. 
Denote by $X$ the blow-up of $\mathbb{P}^2$ at these nine points, 
and by $Y$ the strict transform of $C$. 
Note that $[-Y]$ coincides with the canonical bundle $K_X$ in this example. 
Set $\alpha:=c_1([Y])$. 
Note that, if $N_{Y/X}^m$ is holomorphically trivial for some positive integer $m$, it is classically known that $X$ admits a structure as an elliptic surface which has $Y$ as a fiber, from which it is clear that $(X, \alpha)$ is in Case I. 

In what follows, we consider the case where $N_{Y/X}^m$ is not holomorphically trivial for any positive integer $m$. 
It is known that ${\rm SP}(\alpha)\not=\emptyset$ for almost all choice of nine points $(p_1, p_2, \dots, p_9)\in C^9$ in the measure sense (Under {\it the Diophantine condition}, see \cite{A}, \cite{B}, \cite{U}, see also \cite{K2019}. Note that we may assume that $X\setminus Y$ includes no compact curve by choosing generic nine points. Note also that Theorem \ref{thm:main3} is a generalization of Brunella's theorem \cite[Theorem 1.1 $(i)$]{B} for such an example). 
As we have seen in the previous section, $(X, \alpha)$ is in Case I\!I\!I and $Y\subset K_\alpha^{\rm ess}$ in this case. 
Moreover, by the argument we mentioned in Observation \ref{obs:divisor_main}, it follows that the foliation $\mathcal{F}_\alpha$ can be holomorphically extended to $(X\setminus K_\alpha^{\rm ess})\cup Y$ by adding $Y$ as a leaf. 
By \cite[Proposition 8]{B}, such a holomorphic foliation never can be holomorphically extended to whole $X$. 

Let $(X, \alpha)$ be as above. 
By the argument in the last of the previous section, 
we have that $h_\alpha=-\log |s_Y|_h^2$ holds on a neighborhood $V$ of $Y$ by replacing $h_\alpha$ with $c_1h_\alpha+c_2$ for some $c_1, c_2\in \mathbb{R}$, where $s_Y\in H^0(X, [Y])$ is the canonical section and $h$ is a flat metric of $[Y]|_V$, whose existence is assured by \cite[Theorem 1.1 $(i)$]{B} (or Theorem \ref{thm:main3}). 
Denote by $M^-$ the complement $K_\alpha^{\rm ess}\setminus Y$. 
Note that that $h_\alpha(x)\to \textstyle\inf_{X\setminus K_\alpha^{\rm ess}}h_\alpha$ holds as $x\to \del M^-$, since $h_\alpha$ is $\mathcal{F}_\alpha$-adaptive. 
The following question is one of the biggest motivation of the present paper: 
\begin{question}
Does it hold that $\textstyle\inf_{X\setminus K_\alpha^{\rm ess}}h_\alpha=-\infty$? 
\end{question}
If $\textstyle\inf_{X\setminus K_\alpha^{\rm ess}}h_\alpha=-\infty$, 
the function $h_\alpha$ can be extended to $X\setminus Y$ by letting $h_\alpha|_{M^-}\equiv -\infty$. In this case, one has that $M^-$ is a pluripolar set since $M^-=\{x\in X\setminus Y\mid h_\alpha(x)=-\infty\}$ and $h_\alpha=\textstyle\inf_{A\in \mathbb{R}} h_A$ is a plurisubharmonic function on $X\setminus Y$, where $h_A$ is the plurisubharmonic function on $X\setminus Y$ defined by $h_A(x):=\max\{h_\alpha(x), A\}$ for $A\in \mathbb{R}$. 

Note that one can also construct an example of $(X, \alpha)$ in Case I\!I\!I such that $K_\alpha^{\rm ess}$ includes no compact curve 
by considering the gluing construction of a K3 surface \cite{KU}. 
Again, let $C$ be an smooth cubic of a projective plane $\mathbb{P}^2$. 
Take eighteen points $p_1^\pm, p_2^\pm, \dots, p_9^\pm\in C$. 
Denote by $X^+$ the blow-up of $\mathbb{P}^2$ at the nine points $p_1^+, p_2^+, \dots, p_9^+$, 
and by $X^-$ the blow-up at $p_1^-, p_2^-, \dots, p_9^-$. 
Denote by $Y^\pm$ the strict transforms of $C$ in $X^\pm$. 
Fix an isomorphism $g\colon Y^+\to Y^-$. 
We may assume that neither of $X^\pm\setminus Y^\pm$ admits compact curve and both $[Y^\pm]$ are semi-positive by choosing eighteen points generically. 
In \cite{KU}, we showed by using \cite{A} that there exist neighborhoods $W^\pm$ of $Y^\pm$ in $X^\pm$ such that one can construct a K3 surface $\widetilde{X}$ by holomorphically gluing $X^+\setminus W^+$ and $X^-\setminus W^-$. 
Take an element $\theta\in {\rm SP}(c_1([Y^+]))$ whose support is included in $U\cap (X^+\setminus W^+)$ for a sufficiently small neighborhood $U$ of $\del W^+$ in $X^+$. 
Denote also by $\theta$ the semi-positive $(1, 1)$-form on $\widetilde{X}$ which coincides with $\theta$ on $X^+\setminus W^+$ and is zero on the complement of $X^+\setminus W^+$. 
Then, for the class $\alpha:=\{\theta\}$ of $\widetilde{X}$, it follows from the arguments in the construction of $h_\alpha$ that $K_\alpha^{\rm ess}$ is the union of $M^-$'s of $(X^\pm, c_1([Y^\pm]))$. 


\end{document}